\numberwithin{equation}{section}
\numberwithin{figure}{section}
\theoremstyle{plain}
\newtheorem{thm}{\protect\theoremname}[section]
  \theoremstyle{definition}
  \newtheorem{defn}[thm]{\protect\definitionname}
  \theoremstyle{plain}
  \newtheorem{lem}[thm]{\protect\lemmaname}
  \theoremstyle{plain}
  \newtheorem{prop}[thm]{\protect\propositionname}
  \theoremstyle{plain}
  \newtheorem{cor}[thm]{\protect\corollaryname}
  \theoremstyle{remark}
  \newtheorem{rem}[thm]{\protect\remarkname}
\theoremstyle{definition}
  \providecommand{\corollaryname}{Corollary}
  \providecommand{\definitionname}{Definition}
  \providecommand{\lemmaname}{Lemma}
  \providecommand{\propositionname}{Proposition}
  \providecommand{\remarkname}{Remark}
\providecommand{\theoremname}{Theorem}
\begin{document}

\title{{\LARGE $\mathbb{\mathrm{\mathrm{LND}}}$-Filtrations and Semi-Rigid
Domains}}

\author{{\Large Bachar~~ALHAJJAR} }
\begin{abstract}
We investigate the filtration corresponding to the degree function
induced by a non-zero locally nilpotent derivation and its associated
graded algebra. We show that this kind of filtration, referred to
as the {\footnotesize $\mathrm{LND}$}\emph{-filtration}, is the ideal
candidate to study the structure of \emph{semi-rigid} $k$-domains,
that is, $k$-domains for which every non-zero locally nilpotent derivation
gives rise to the same filtration\emph{.} Indeed, the{\footnotesize{}
$\mathrm{LND}$}-filtration gives a very precise understanding of
these structure, it is impeccable for the computation of the Makar-Limanov
invariant, and it is an efficient tool to determine their isomorphism
types and automorphism groups. Then, we construct a new interesting
class of semi-rigid $k$-domains in which we elaborate the fundamental
requirement of {\footnotesize $\mathrm{LND}$}-filtrations. The importance
of these new examples is due to the fact that they possess a relatively
big set of invariant sub-algebras, which can not be recoverd by known
invariants such as the Makar-Limanov and the Derksen invariants. Also,
we define a new family of invariant sub-algebras as a generalization
of the Derksen invariant. Finally, we introduce an algorithm to establish
explicit isomorphisms between cylinders over non-isomorphic members
of the new class, providing by that new counter-examples to the cancellation
problem.
\end{abstract}

\address{Bachar ALHAJJAR, Institut de Math\'ematiques de Bourgogne, Universit\'e
de Bourgogne, 9 Avenue Alain Savary, BP 47870, 21078 Dijon, France.}

\email{Bachar.Alhajjar@gmail.com~,~~and~~~Bachar.Alhajjar@u-bourgogne.fr}

\subjclass[2000]{Primary: 14R20. Secondary: 13N15,}

\keywords{Locally nilpotent derivation, degree functions, Makar-Limanov invariant,
Derksen invariant, affine space, affine variety, graded ring, automorphism
group.}

\maketitle

\section*{\textbf{\normalsize Introduction}}

Let $k$ be a field of characteristic zero and let $B$ be a commutative
$k$-domain. A $k$-derivation $\partial\in\mathrm{Der}_{k}(B)$ is
said to be\emph{ locally nilpotent} if for every $a\in B$, there
is an integer $n\geq0$ such that $\partial^{n}(a)=0$.

An important invariant of $k$-domains $B$ admitting non-trivial
locally nilpotent derivations is the so called \emph{Makar-Limanov
invariant} $\mathrm{ML}(B)$ which was defined by Makar-Limanov as
the intersection of the kernels of all locally nilpotent derivations
on $B$ (\cite{key-M-L1996}). This invariant was initially introduced
as a tool to distinguish certain $k$-domains from polynomial rings
but it has many other applications for the study of $k$-algebras
and their automorphism groups (\cite{key-ML2001}). One of the main
difficulties in applications is to compute this invariant without
a prior knowledge of all locally nilpotent derivations of a given
$k$-domain. 

In \cite{key-Kaliman Makar}, S. Kaliman and L. Makar-Limanov developed
general techniques to determine the $\mathrm{ML}$-invariant for a
class of finitely generated $k$-domains $B=k[X_{1},\ldots,X_{n}]/I$.
The idea, referred to as \textquotedblleft{}homogenization of derivations'',
is to reduce the problem to the study of homogeneous locally nilpotent
derivations on graded algebras $\mathrm{Gr}(B)$ associated to $B$.
For this, one considers appropriate filtrations $\mathcal{F}=\{\mathcal{F}_{i}\}_{i\in\mathbb{R}}$
on $B$ generated by real-valued weight degree functions $\omega\in\mathbb{R}^{n}$,
in such a way that every non-zero locally nilpotent derivation on
$B$ induces a non-zero homogeneous locally nilpotent derivation on
the associated graded algebra $\mathrm{Gr_{\mathcal{F}}}(B)$. Unfortunately,
these techniques only work if the associated graded algebra $\mathrm{Gr}(B)$
is in fact a $k$-domain itself, which will only occur if the ideal
$\hat{I}$, generated by top homogenous components relative to $\omega$
of all elements in $I$, is prime.

Finding a new way to tackle similar complications became an inevitable
necessity. Therefore, we start inspecting real-valued weight degree
functions on $k^{[N]}$ with the following new perspective. The positive
integer $N$ is chosen to be bigger than $n$, the dimension of the
ambient space. The considered ring $B=k^{[n]}/I$ is identified in
a specific ``twisting'' way to $k^{[N]}/J\simeq B$. This different
point of view allows us to avoid these kind of difficulties. Furthermore,
it simplifies the study of homogenous locally nilpotent derivation
even in these cases where classical techniques work. 

We present a new class of examples for which the \textquotedblleft{}homogenization''
method can be effectively applied with the alternative perspective,
while all other approaches fail. The new class comes to be a very
interesting object due to the fact that it possesses a huge set of
invariant sub-algebras, which can not be recoverd by any known invariant
such as the Makar-Limanov and the Derksen invariants.

As a modest outcome, the alternative approach delivers a full description
of the filtration induced by any locally nilpotent derivation, with
a finitely generated kernel, and its associated graded algebra. In
particular, a non-zero locally nilpotent derivation $\partial$ gives
rise to a proper $\mathbb{N}$-filtration $\mathcal{F}=\{\mathcal{F}_{i}\}_{i\in\mathbb{N}}$
of $B$ by the sub-spaces $\mathcal{F}_{i}=\mathrm{\ker}\partial^{i+1}$,
$i\in\mathbb{N}$. We call it the \emph{$\partial$-filtration}, which
corresponds to the $\mathbb{N}$-degree function $\deg_{\partial}$
induced by $\partial$. It turns out that $\deg_{\partial}$ is nothing
but the degree function induced by an $\mathbb{N}$-weight degree
function $\omega\in\mathbb{N}^{[N]}$ defined on $k^{[N]}$ for suitable
choices of $\omega$ and $N$.

In turn, the $\partial$-filtration comes out to be the ideal candidate
to study the structure of \emph{semi-rigid} $k$-domains, that is,
$k$-domains for which every non-zero locally nilpotent derivation
gives rise to the same filtration that we call the \emph{unique} $\mathrm{LND}$\emph{-filtration.}
This unique $\mathrm{LND}$-filtration gives a very precise understanding
of the structure of semi-rigid $k$-domains, it is impeccable for
the computation of the $\mathrm{ML}$-invariant, and it is an efficient
tool to determine isomorphism types and automorphism groups. Nevertheless,
the computation of the $\mathrm{ML}$-invariant, isomorphism types,
and automorphism groups of similar classically known structures can
be simplified and reduced considering this new point of view.

Another important tool for the study of non-rigid $k$-domains is
the\emph{ Derksen invariant} $\mathcal{D}(B)$ which is defined to
be the sub-algebra of $B$ generated by $\ker\partial$ for all non-zero
locally nilpotent derivations. We generalize this invariant to obtain
a new family $\left\{ \mathrm{\mathrm{AL}}_{i}(B)\right\} _{i\in\mathbb{N}}$
of invariant sub-algebras of $B$, where for each $i\in\mathbb{N}$
we define $\mathrm{\mathrm{AL}}_{i}(B)$ to be the algebra generated
by $\ker\partial^{i+1}$ for all non-zero locally nilpotent derivation
of $B$. We are interested in one particular member of this family
of invariants that corresponds to $i=1$, which we call the ring of
all local slices of $B$ and which we denote $\mathrm{AL}$-\emph{invariant}.
We show that the new class of $k$-domains can be realized as an affine
modification of the $\mathrm{AL}$-invariant with center $(f,I)$
for certain ideal $I$ in $\mathrm{AL}$ and some $f\in I$.

Finally, we propose an algorithm to construct explicit isomorphisms
between cylinders over non-isomorphic members of the new class, providing
by that new counter-examples to the cancellation problem.

\section{\textbf{Preliminaries}}

In this section we briefly recall basic facts on filtered algebra
and their relation with derivation in a form appropriate to our needs. 

In the sequel, unless otherwise specified $B$ will denote a commutative
domain over a field $k$ of characteristic zero. The set $\mathbb{Z}_{\geqslant0}$
of non-negative integers will be denoted by $\mathbb{N}$.

\subsection{\label{sub:Filtration-and-the} Filtrations and associated graded
algebras}
\begin{defn}
\label{def.proper.filtration} An \textit{$\mathbb{N}$-filtration}
of $B$ is a collection $\{\mathcal{F}_{i}\}_{i\in\mathbb{N}}$ of
$k$-sub-vector-spaces of $B$ with the following properties: 

1- $\mathcal{F}_{i}\subset\mathcal{F}_{i+1}$ for all $i\in\mathbb{N}$
. 

2- $B=\cup_{i\in\mathbb{N}}\mathcal{F}_{i}$ . 

3- $\mathcal{F}_{i}.\mathcal{F}_{j}\subset\mathcal{F}_{i+j}$ for
all $i,j\in\mathbb{N}$ . 

\noindent  The filtration is called \emph{proper} if the following
additional property holds: 

4- If $a\in\mathcal{F}_{i}\setminus\mathcal{F}_{i-1}$ and $b\in\mathcal{F}_{j}\setminus\mathcal{F}_{j-1}$,
then $ab\in\mathcal{F}_{i+j}\setminus\mathcal{F}_{i+j-1}$.
\end{defn}
\noindent  There is a one-to-one correspondence between proper $\mathbb{N}$-filtrations
and so called $\mathbb{N}$-degree functions:
\begin{defn}
An $\mathbb{N}$\emph{-degree function} on $B$ is a map $\deg:B\longrightarrow\mathbb{N}\mathbb{\cup}\{-\infty\}$
such that, for all $a,b\in B$, the following conditions are satisfied: 

$($1$)$ $\deg(a)=-\infty$ $\Leftrightarrow$ $a=0$.

$($2$)$ $\deg(ab)=\deg(a)+\deg(b)$.

$($3$)$ $\deg(a+b)\leq\max\{\deg(a),\deg(b)\}$.

\noindent  If the equality in (2) replaced by the inequality $\deg(ab)\leq\deg(a)+\deg(b)$,
we say that $\deg$ is an $\mathbb{N}$\emph{-semi-degree function.}
\end{defn}
Indeed, for an $\mathbb{N}$-degree function on $B$, the sub-sets
$\mathcal{F}_{i}=\{b\in B\mid\deg(b)\leq i\}$ are $k$-subvector
spaces of $B$ that give rise to a proper $\mathbb{N}$-filtration
$\{\mathcal{F}_{i}\}_{i\in\mathbb{N}}$. Conversely, every proper
$\mathbb{N}$-filtration $\{\mathcal{F}_{i}\}_{i\in\mathbb{N}}$,
yields an $\mathbb{N}$-degree function $\omega:B\longrightarrow\mathbb{N}\mathbb{\cup}\{-\infty\}$
defined by $\omega(0)=-\infty$ and $\omega(b)=i$ if $b\in\mathcal{F}_{i}\setminus\mathcal{F}_{i-1}$.
\begin{defn}
\label{Def.Graded.lg} Given a $k$-domain $B=\cup_{i\in\mathbb{N}}\mathcal{F}_{i}$
equipped with a proper $\mathbb{N}$-filtration, the associated graded
algebra $\mathrm{Gr}(B)$ is the $k$-vector space 
\[
\mathrm{Gr}(B)=\oplus_{i\in\mathbb{N}}\mathcal{F}_{i}/\mathcal{F}_{i-1}
\]
equipped with the unique multiplicative structure for which the product
of the elements $a+\mathcal{F}_{i-1}\in\mathcal{F}_{i}/\mathcal{F}_{i-1}$
and $b+\mathcal{F}_{j-1}\in\mathcal{F}_{j}/\mathcal{F}_{j-1}$, where
$a\in\mathcal{F}_{i}$ and $b\in\mathcal{F}_{j}$, is the element
\[
(a+\mathcal{F}_{i-1})(b+\mathcal{F}_{j-1}):=ab+\mathcal{F}_{i+j-1}\in\mathcal{F}_{i+j}/\mathcal{F}_{i+j-1}.
\]
Property 4 for a proper filtration in Definition \ref{def.proper.filtration}
ensures that $\mathrm{Gr}(B)$ is a commutative $k$-domain when $B$
is an integral domain. \noindent  Since for each $a\in B\setminus\{0\}$
the set $\{n\in\mathbb{N}\mid a\in\mathcal{F}_{n}\}$ has a minimum,
there exists $i$ such that $a\in\mathcal{F}_{i}$ and $a\notin\mathcal{F}_{i-1}$.
So we can define a $k$-linear map $\mathrm{gr}:B\longrightarrow\mathrm{Gr}(B)$
by sending $a$ to its class in $\mathcal{F}_{i}/\mathcal{F}_{i-1}$,
i.e $a\mapsto a+\mathcal{F}_{i-1}$, and $\mathrm{gr}(0)=0$. We will
frequently denote $\mathrm{gr}(a)$ simply by $\overline{a}$. Observe
that $\mathrm{gr}(a)=0$ if and only if $a=0$. 
\end{defn}
Denote by $\deg$ the $\mathbb{N}$-degree function $\deg:B\longrightarrow\mathbb{N}\cup\{-\infty\}$
corresponding to the proper $\mathbb{N}$-filtration $\{\mathcal{F}_{i}\}_{i\in\mathbb{N}}$.
We have the following properties.
\begin{lem}
\label{lem:graded relation} Given $a,b\in B$ the following holds:

\emph{P1)} $\overline{a\, b}=\overline{a}\,\overline{b}$, that is
$\mathrm{gr}$ is a multiplicative map. 

\emph{P2) }If $\deg(a)>\deg(b)$, then $\overline{a+b}=\overline{a}$. 

\emph{P3)} If $\deg(a)=\deg(b)=\deg(a+b)$, then $\overline{a+b}=\overline{a}+\overline{b}$. 

\emph{P4)} If $\deg(a)=\deg(b)>\deg(a+b)$, then $\overline{a}+\overline{b}=0$.
In particular, $\mathrm{gr}$ is not an additive map in general.\end{lem}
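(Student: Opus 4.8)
The plan is to prove all four properties by directly unwinding the definitions of the degree function $\deg$, of the linear map $\mathrm{gr}$, and of the multiplication on $\mathrm{Gr}(B)$, while keeping careful track of which homogeneous component $\mathcal{F}_i/\mathcal{F}_{i-1}$ each element inhabits. None of the four statements should require more than this bookkeeping, so I do not anticipate a genuine obstacle; the only subtlety is to make sure that the sums $\overline{a}+\overline{b}$ and $\overline{a+b}$ are interpreted in the correct graded pieces before they are compared.

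For P1, set $i=\deg(a)$ and $j=\deg(b)$. By Property 4 of the proper filtration in Definition \ref{def.proper.filtration}, $ab\in\mathcal{F}_{i+j}\setminus\mathcal{F}_{i+j-1}$, hence $\deg(ab)=i+j$ and $\overline{ab}=ab+\mathcal{F}_{i+j-1}$. On the other hand, by the very definition of the product on $\mathrm{Gr}(B)$ in Definition \ref{Def.Graded.lg}, the class $\overline{a}=a+\mathcal{F}_{i-1}$ times the class $\overline{b}=b+\mathcal{F}_{j-1}$ equals $ab+\mathcal{F}_{i+j-1}$. Thus $\overline{ab}=\overline{a}\,\overline{b}$, which is P1. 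For P2, write $i=\deg(a)>j=\deg(b)$. Then $b\in\mathcal{F}_j\subseteq\mathcal{F}_{i-1}$, so $a+b\in\mathcal{F}_i$ and in particular $\deg(a+b)\le i$. To see that equality holds, I would argue by contradiction: if $a+b\in\mathcal{F}_{i-1}$, then $a=(a+b)-b\in\mathcal{F}_{i-1}$, contradicting $\deg(a)=i$. Consequently $\overline{a+b}=(a+b)+\mathcal{F}_{i-1}=a+\mathcal{F}_{i-1}=\overline{a}$, since $b\in\mathcal{F}_{i-1}$; this gives P2.

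Properties P3 and P4 both rest on the observation that when $\deg(a)=\deg(b)=i$, the classes $\overline{a}=a+\mathcal{F}_{i-1}$ and $\overline{b}=b+\mathcal{F}_{i-1}$ lie in the single quotient vector space $\mathcal{F}_i/\mathcal{F}_{i-1}$, where their sum is computed as $(a+b)+\mathcal{F}_{i-1}$. For P3, the hypothesis $\deg(a+b)=i$ says exactly that $\overline{a+b}=(a+b)+\mathcal{F}_{i-1}$, so $\overline{a}+\overline{b}=\overline{a+b}$. For P4, the hypothesis $\deg(a+b)<i$ forces $a+b\in\mathcal{F}_{i-1}$, whence $\overline{a}+\overline{b}=(a+b)+\mathcal{F}_{i-1}=0$ in $\mathcal{F}_i/\mathcal{F}_{i-1}$.

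Finally, to justify the closing remark that $\mathrm{gr}$ is not additive in general, I would note that in the situation of P4 one has $\overline{a+b}\neq0$ whenever $a+b\neq0$, by the fact recorded in Definition \ref{Def.Graded.lg} that $\mathrm{gr}(c)=0$ exactly when $c=0$, while $\overline{a}+\overline{b}=0$; moreover $\overline{a+b}$ lives in the component of index $\deg(a+b)<i$, a graded piece different from $\mathcal{F}_i/\mathcal{F}_{i-1}$ altogether, so $\overline{a+b}\neq\overline{a}+\overline{b}$. I expect the write-up to be short, the main care being the phrase ``interpreted in the same component'' that makes the quotient-space additions in P3 and P4 legitimate.
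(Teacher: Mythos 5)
Your proof is correct and follows essentially the same route as the paper's: unwinding the definitions of $\deg$, $\mathrm{gr}$, and the product on $\mathrm{Gr}(B)$ component by component. The only differences are cosmetic improvements — you explicitly justify $\deg(a+b)=\deg(a)$ in P2 (which the paper merely asserts) and you prove P4 directly rather than by the paper's contradiction argument.
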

\begin{proof}
Let us assume that $\deg(a)=i$ and $\deg(b)=j$. By definition, $\deg(ab)=i+j$
means that $ab\in\mathcal{F}_{i+j}$ and $ab\notin\mathcal{F}_{i+j-1}$,
so $\overline{ab}=ab+\mathcal{F}_{i+j-1}:=(a+\mathcal{F}_{i-1})(b+\mathcal{F}_{j-1})=\overline{a}\,\overline{b}$.
Which gives P1. For P2 we observe that since $\deg(a+b)=\deg(a)$,
we have $\overline{a+b}=(a+b)+\mathcal{F}_{i-1}=(a+\mathcal{F}_{i-1})+(b+\mathcal{F}_{i-1})$,
and since $\mathcal{F}_{j-1}\subset\mathcal{F}_{j}\subseteq\mathcal{F}_{i-1}$
as $i>j$, we get $b+\mathcal{F}_{i-1}=0$. P3) is immediate, by definition.
Finally, assume by contradiction that $\overline{a}+\overline{b}\neq0$,
then $\overline{a}+\overline{b}=(a+\mathcal{F}_{i-1})+(b+\mathcal{F}_{i-1})=((a+b)+\mathcal{F}_{i-1})\neq0$,
which means that $a+b\notin\mathcal{F}_{i-1}$ and $\deg(a+b)=i$,
which is absurd. So P4 follows.
\end{proof}

\subsection{Derivations}

\indent\newline\noindent  By a \textit{$k$-derivation} of $B$,
we mean a $k$-linear map $D:B\longrightarrow B$ which satisfies
the Leibniz rule: For all $a,b\in B$; $D(ab)=aD(b)+bD(a)$. The set
of all $k$-derivations of $B$ is denoted by $\mathrm{Der}_{k}(B)$. 

\noindent  The \textit{kernel} of a derivation $D$ is the subalgebra
$\ker D=\left\{ b\in B;D(b)=0\right\} $ of $B$.

\noindent  The \textit{plinth ideal} of $D$ is the ideal $\mathrm{pl}(D)=\ker D\cap D(B)$
of $\mathrm{\ker}D$, where $D(B)$ denotes the image of $B$.

\noindent  An element $s\in B$ such that $D(s)\in\ker(D)\setminus\{0\}$
is called a \textit{local slice} for $D$.
\begin{defn}
\label{Def Graded} Given a $k$-algebra $B=\cup_{i\in\mathbb{N}}\mathcal{F}_{i}$
equipped with a proper $\mathbb{N}$-filtration, a $k$-derivation
$D$ of $B$ is said to \emph{respect} the filtration if there exists
an integer $d$ such that $D(\mathcal{F}_{i})\subset\mathcal{F}_{i+d}$
for all $i\in\mathbb{N}$. The smallest integer $d$, such that $D(\mathcal{F}_{i})\subset\mathcal{F}_{i+d}$
for all $i\in\mathbb{N}$, is called the \emph{degree }of\emph{ }$D$
with respect to $\mathcal{F}=\{\mathcal{F}_{i}\}_{i\in\mathbb{N}}$
and denoted by $\deg_{\mathcal{F}}D$.

\noindent  Note that if $D$ respects the filtration $\mathcal{F}=\{\mathcal{F}_{i}\}_{i\in\mathbb{N}}$
then $\deg_{\mathcal{F}}D$ is well-defined. Indeed, denote by $\deg$
the $\mathbb{N}$-degree function corresponding to $\mathcal{F}=\{\mathcal{F}_{i}\}_{i\in\mathbb{N}}$
and let $U$ be the non-empty subset of $\mathbb{Z}\cup\{-\infty\}$
defined by $U:=\left\{ \deg\left(D(b)\right)-\deg\left(b\right)\,;\, b\in B\setminus\{0\}\right\} $.
Since $D$ respects the filtration $\mathcal{F}=\{\mathcal{F}_{i}\}_{i\in\mathbb{N}}$,
the set $U$ is bounded above by $d$. Thus it has a greatest element
$d_{0}$ which is exactly $\deg_{\mathcal{F}}D$ by definition. 

\noindent  Suppose that $D$ respects the filtration $\mathcal{F}=\{\mathcal{F}_{i}\}_{i\in\mathbb{N}}$
and let $d=\deg_{\mathcal{F}}D$, we define a $k$-linear map $\overline{D}:\mathrm{Gr}(B)\longrightarrow\mathrm{Gr}(B)$
as follows: If $D=0$, then $\overline{D}=0$ the zero map. Otherwise,
if $D\neq0$ then we define 
\[
\overline{D}:\mathcal{F}_{i}/\mathcal{F}_{i-1}\longrightarrow\mathcal{F}_{i+d}/\mathcal{F}_{i+d-1}
\]
by the rule $\overline{D}(a+\mathcal{F}_{i-1})=D(a)+\mathcal{F}_{i+d-1}$.
Now extend $\overline{D}$ to all of $\mathrm{Gr}(B)$ by linearity.
One checks that $\overline{D}$ satisfies the Leibniz rule, therefore
it is a homogeneous $k$-derivation of $\mathrm{Gr}(B)$ of degree
$d$, that is, $\overline{D}$ sends homogeneous elements of degree
$i$ to zero or to homogeneous elements of degree $i+d$ . 
\end{defn}
Observe that $\overline{D}=0$ if and only if $D=0$. In addition,
$\mathrm{gr}(\ker D)\subset\ker\overline{D}$.

\section{$\mathrm{LND}$\textbf{-Filtrations and New Invariant sub-algebras}}

In this section we introduce the $\partial$-filtration associated
with a locally nilpotent derivation $\partial$. We explain how to
compute this filtration and its associated graded algebra in certain
situations. Also we present new invariants that generalize the Derksen
invariant. 
\begin{defn}
A $k$-derivation $\partial\in\mathrm{Der}_{k}(B)$ is said to be\emph{
locally nilpotent} if for every $a\in B$, there exists $n\in\mathbb{N}$
$($depending of $a$$)$ such that $\partial^{n}(a)=0$. The set
of all locally nilpotent derivations of $B$ is denoted by $\mathrm{\mathrm{LND}}(B)$.

\noindent  In particular, every locally nilpotent derivation $\partial$
of $B$ gives rise to a proper $\mathbb{N}$-filtration of $B$ by
the sub-spaces $\mathcal{F}_{i}=\mathrm{\ker}\partial^{i+1}$, $i\in\mathbb{N}$,
that we call the \emph{$\partial$-filtration. }It is straightforward
to check (see \cite[ Prop. 1.9]{key-gene}) that the $\partial$-filtration
corresponds to the $\mathbb{N}$-degree function $\deg_{\partial}:B\longrightarrow\mathbb{N}\cup\{-\infty\}$
defined by 
\end{defn}
\begin{center}
$\deg_{\partial}(a):=\min\{i\in\mathbb{N}\mid\partial^{i+1}(a)=0\}$,
and $\deg_{\partial}(0):=-\infty$. 
\par\end{center}

\noindent  Note that by definition $\mathcal{F}_{0}=\ker\partial$
and that $\mathcal{F}_{1}\setminus\mathcal{F}_{0}$ consists of all
local slices for $\partial$.\\

Let $\mathrm{Gr}_{\partial}(B)=\oplus_{i\in\mathbb{N}}\mathcal{F}_{i}/\mathcal{F}_{i-1}$
denote the associated graded algebra relative to the $\partial$-filtration
$\{\mathcal{F}_{i}\}_{i\in\mathbb{N}}$. Let $\mathrm{gr}_{\partial}:B\longrightarrow\mathrm{Gr_{\partial}}(B)$;
$a\overset{\mathrm{gr}_{\partial}}{\longmapsto}\overline{a}$ be the
natural map between $B$ and $\mathrm{Gr_{\partial}}(B)$ defined
in \ref{Def.Graded.lg}, where $\overline{a}$ denote $\mathrm{gr}_{\partial}(a)$.

The next Proposition, which is due to Daigle (\cite[Theorem 2.11]{key-gene},
see also \cite[Theorem 1.7 and Corollary 4.12]{D. daigle: tame degree}),
implies in particular that if $B$ is of finite transcendence degree
over $k$, then every non-zero $D\in\mathrm{LND}(B)$ respects the
$\partial$-filtration and therefore induces a non-zero homogeneous
locally nilpotent derivation $\overline{D}$ of $\mathrm{Gr}_{\partial}(B)$.
\begin{prop}
\label{Pro:Daigle} $($\textbf{\emph{Daigle}}$)$ Suppose that $B$
is a commutative domain, of finite transcendence degree over $k$.
Then for every pair $D\in\mathrm{Der}_{k}(B)$ and $\partial\in\mathrm{\mathrm{LND}}(B)$,
$D$ respects the $\partial$-filtration. Consequently, $\overline{D}$
is a well defined homogeneous derivation of the integral domain $\mathrm{Gr}_{\partial}(B)$
relative to this filtration, and it is locally nilpotent if $D$ is
locally nilpotent.
\end{prop}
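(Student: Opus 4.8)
The statement has two parts: that an \emph{arbitrary} $D\in\mathrm{Der}_{k}(B)$ respects the $\partial$-filtration, and that the induced map $\overline{D}$ is a homogeneous derivation of $\mathrm{Gr}_{\partial}(B)$ which is locally nilpotent whenever $D$ is. The plan is to establish the boundedness of $\deg_{\partial}(D(a))-\deg_{\partial}(a)$ first, since the second assertion then follows from the construction in \ref{Def Graded}. The naive idea of bounding $D$ on a generating set is unavailable, because $B$ need not be finitely generated; only $\mathrm{trdeg}_{k}(B)=:n$ is finite, and this is precisely the hypothesis I will exploit.

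First I would pass to the fraction field $L=\mathrm{Frac}(B)$. The degree function $\deg_{\partial}$ extends to a group homomorphism $L^{\ast}\to\mathbb{Z}$ by $\deg_{\partial}(a/b)=\deg_{\partial}(a)-\deg_{\partial}(b)$, and both $D$ and $\partial$ extend to $k$-derivations of $L$ by the quotient rule, so it suffices to bound $\deg_{\partial}(D(x))-\deg_{\partial}(x)$ for $x\in L^{\ast}$. Assuming $\partial\neq0$ (the case $\partial=0$ being trivial, with every $D$ respecting the trivial filtration), I would fix a local slice $s\in B$, so that $\partial s=p\in\ker\partial\setminus\{0\}$ and $\deg_{\partial}(s)=1$. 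By the structure theory of locally nilpotent derivations one has $L=K(s)$ with $K=\mathrm{Frac}(\ker\partial)$ and $s$ transcendental over $K$. The crucial simplification is that under this identification $\deg_{\partial}$ becomes the ordinary $s$-degree: indeed $\deg_{\partial}$ vanishes on $K^{\ast}$ (since $\ker\partial=\mathcal{F}_{0}$) and sends $s$ to $1$, so in a polynomial $\sum_{m}c_{m}s^{m}$ with $c_{m}\in K$ the leading term cannot cancel and $\deg_{\partial}=\deg_{s}$.

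Next I would decompose $D$ along a transcendence basis adapted to $\partial$. Since $\mathrm{trdeg}_{k}(\ker\partial)=n-1$, I can choose $u_{1},\dots,u_{n-1}\in\ker\partial$ which together with $s$ form a transcendence basis of $L$ over $k$; because $\mathrm{char}\,k=0$, the extension $L/k(u_{1},\dots,u_{n-1},s)$ is separable algebraic, so the coordinate derivations $\partial_{u_{1}},\dots,\partial_{u_{n-1}},\partial_{s}$ exist and $D=\sum_{i}D(u_{i})\partial_{u_{i}}+D(s)\partial_{s}$. The coefficients $\deg_{\partial}(D(u_{i}))$ and $\deg_{\partial}(D(s))$ are finitely many fixed integers, so by the degree-function inequalities it is enough to verify that each coordinate derivation respects the filtration. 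For $\partial_{s}$ this is immediate: expanding $\partial$ in the same basis gives $\partial=p\,\partial_{s}$, whence $\partial_{s}=p^{-1}\partial$ lowers $\deg_{\partial}$ by one. For $\partial_{u_{i}}$, the point is that $\partial_{u_{i}}(s)=0$ and $\partial_{u_{i}}$ preserves $K$ (being the unique extension to $K/k(u_{1},\dots,u_{n-1})$ of the standard partial), so on $\sum_{m}c_{m}s^{m}$ it acts coefficientwise with $\deg_{\partial}(\partial_{u_{i}}(c_{m}))\leq0$; thus $\deg_{\partial}(\partial_{u_{i}}(x))\leq\deg_{\partial}(x)$, and this bound extends from $K[s]$ to all of $L=K(s)$ by the quotient rule. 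Combining, $D$ respects the filtration with degree at most $\max\{\deg_{\partial}(D(s))-1,\ \max_{i}\deg_{\partial}(D(u_{i}))\}$.

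Once $D$ respects the $\partial$-filtration, \ref{Def Graded} produces the homogeneous derivation $\overline{D}$ of the integral domain $\mathrm{Gr}_{\partial}(B)$, so only local nilpotence remains. Here I would use that for a homogeneous element $\overline{a}=\mathrm{gr}_{\partial}(a)$ one has $\overline{D}(\overline{a})\in\{\mathrm{gr}_{\partial}(D(a)),0\}$, whence inductively $\overline{D}^{m}(\overline{a})\in\{\mathrm{gr}_{\partial}(D^{m}(a)),0\}$; if $D$ is locally nilpotent then $D^{N}(a)=0$ for some $N$, forcing $\overline{D}^{N}(\overline{a})=0$, and since every element of $\mathrm{Gr}_{\partial}(B)$ is a finite sum of homogeneous ones, $\overline{D}$ is locally nilpotent. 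I expect the main obstacle to be exactly the reduction that makes the argument finite, namely the passage to $L=K(s)$ together with the adapted transcendence basis: the two facts that $L=\mathrm{Frac}(\ker\partial)(s)$ and that $\deg_{\partial}$ coincides there with the $s$-degree are what convert the infinite-dimensional boundedness question into the finitely many coordinate estimates above.
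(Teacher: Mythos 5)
Your argument is correct, but there is nothing in the paper to compare it against line by line: the paper states Proposition \ref{Pro:Daigle} with an attribution and settles it entirely by citation to \cite[Theorem 2.11]{key-gene} and \cite[Theorem 1.7 and Corollary 4.12]{D. daigle: tame degree}, giving no proof of its own. What you have produced is a self-contained reconstruction of that tameness theorem, and each step checks out: the extension of $\deg_{\partial}$ to $L=\mathrm{Frac}(B)$ is legitimate because $\deg_{\partial}$ is multiplicative; the identification $L=K(s)$, $K=\mathrm{Frac}(\ker\partial)$, is the standard slice/localization theorem $B[p^{-1}]=(\ker\partial)[p^{-1}][s]$ with $p=\partial s$; the equality $\deg_{\partial}=\deg_{s}$ on $K(s)$ holds because terms of distinct degree cannot cancel; the decomposition $D=\sum_{i}D(u_{i})\partial_{u_{i}}+D(s)\partial_{s}$ is valid for any field of finite transcendence degree in characteristic zero (this is exactly where the finiteness hypothesis enters, as you say); the bounds $\deg_{\partial}(\partial_{u_{i}}(x))\leq\deg_{\partial}(x)$ and $\deg_{\partial}(\partial_{s}(x))\leq\deg_{\partial}(x)-1$ (via $\partial_{s}=p^{-1}\partial$ and the fact that your degree bounds survive the quotient rule) then give the uniform shift $d\leq\max\{\deg_{\partial}(D(s))-1,\max_{i}\deg_{\partial}(D(u_{i}))\}$; and the local-nilpotence argument via $\overline{D}^{m}(\overline{a})\in\{\mathrm{gr}_{\partial}(D^{m}(a)),0\}$ is the right one and is compatible with the construction of $\overline{D}$ in Definition \ref{Def Graded}. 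The trade-off between the two routes is clear: the cited references develop a general theory of tame versus wild degree functions, of which this statement is a corollary, whereas your proof buys the reader a direct, elementary argument whose only external inputs are the slice theorem, the fact that a derivation of a domain extends uniquely to its fraction field, and the standard basis $\partial_{u_{1}},\ldots,\partial_{u_{n-1}},\partial_{s}$ of $\mathrm{Der}_{k}(L)$ over $L$. Two cosmetic points for a final write-up: note explicitly that $D(u_{i}),D(s)\in B$, so the shift $d$ is a genuine integer (and may be enlarged to be nonnegative, avoiding any convention issue with negative filtration indices), and keep the one-line uniqueness argument showing that $\partial_{u_{i}}$ stabilizes $K$, since that is the only place where a careless reader could object.
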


\subsection{New Invariants $($$\mathrm{\mathrm{AL}}_{i}$-invariants$)$}
\begin{defn}
\emph{\label{Def:The-AL-iinvariant}} Let $\partial\in\mathrm{\mathrm{LND}}(B)$
be non-zero and let $\{\mathcal{F}_{i}\}_{i\in\mathbb{N}}$ be the
$\partial$-filtration. We denote by $\mathrm{L}_{\partial}$ the
sub-algebra of $B$ generated by $\mathcal{F}_{1}=\ker\partial^{2}$
and we call it the\emph{ ring of local slices} for $\partial$. The
sub-algebra of $B$ generated by $\mathrm{L}_{\partial}$ for all
non-zero locally nilpotent derivation of $B$ will be called the ring
of \emph{all local slices} of $B$. It will be denoted by $\mathrm{\mathrm{AL}}(B)$
and referred to as the\emph{ $\mathrm{\mathrm{AL}}$-invariant}, which
is manifested by the fact that\emph{ }$\mathrm{\mathrm{AL}}(B)$ is
invariant by all algebraic $k$-automorphisms of $B$.
\end{defn}
\noindent  In a sense, the $\mathrm{\mathrm{AL}}$-invariant is a
generalization of the\emph{ Derksen invariant} $\mathcal{D}(B)$ which
is defined to be the sub-algebra of $B$ generated by $\ker\partial$
for all non-zero $\mathrm{\mathrm{LND}}(B)$. 

In a more general way we define the following invariants for non-rigid
$k$-domains. Let $\mathrm{\mathrm{AL}}_{i}(B)$ denotes the sub-algebra
of $B$ generated by $\ker\partial^{i+1}$ for all non-zero locally
nilpotent derivation of $B$, then it is clear that $\mathrm{\mathrm{AL}}_{i}(B)$
is invariant by all algebraic $k$-automorphisms of $B$. Indeed,
let $\partial\in\mathrm{LND}(B)\setminus\{0\}$ and $\alpha\in\mathrm{\mathrm{Aut}}_{k}(B)$,
then $(\alpha^{-1}\partial\alpha)^{n}=\alpha^{-1}\partial^{n}\alpha$
for every $n\in\mathbb{N}$, which implies that $\partial_{\alpha}:=\alpha^{-1}\partial\alpha\in\mathrm{LND}(B)$.
Therefore, $\alpha.\partial_{\alpha}^{n}=\partial^{n}.\alpha$ so
we get $\deg_{\partial_{\alpha}}(b)=\deg_{\partial}(\alpha(b))$ for
any $b\in B$. In other words, $\alpha$ respects $\deg_{\partial}$
and $\deg_{\partial_{\alpha}}$, that is, $\alpha$ sends an element
of degree $i$ relative to $\deg_{\partial_{\alpha}}$, to an element
of the same degree $i$ relative to $\deg_{\partial}$.

\noindent  Note that $\mathrm{\mathrm{AL}}_{0}(B)=\mathcal{D}(B)$,
$\mathrm{\mathrm{AL}}_{1}(B)=\mathrm{\mathrm{AL}}(B)$, and $\mathrm{\mathrm{AL}}_{i}(B)\subseteq\mathrm{\mathrm{AL}}_{i+1}(B)$
for all $i$. 

In the case where $B=k[X_{1},\ldots,X_{n}]/I=k[x_{1},\ldots,x_{n}]$
is a finitely generated $k$-domain that admits a non-zero $\partial\in\mathrm{LND}(B)$,
the chain of inclusions $\mathrm{\mathrm{AL}}_{0}(B)\hookrightarrow\mathrm{\mathrm{AL}}_{1}(B)\hookrightarrow\mathrm{\mathrm{AL}}_{2}(B)\hookrightarrow\cdots\hookrightarrow\mathrm{\mathrm{AL}}_{i}(B)\hookrightarrow\cdots$
will eventually stabilize, that is, there exists $d\in\mathbb{N}$
such that $\mathrm{\mathrm{AL}}_{d}(B)=B$. Indeed, by definition
of $\mathrm{LND}$, there exist $d_{1},\ldots,d_{n}\in\mathbb{N}$
such that $\partial^{d_{i}+1}(x_{i})=0$. Denote by $\{\mathcal{F}_{i}\}_{i\in\mathbb{N}}$
the $\partial$-filtration and let $d=\max_{i\in\{1,\ldots,n\}}\{d_{i}\}$,
then $x_{i}\in\mathcal{F}_{d}$ for all $i$. Since $\mathcal{F}_{d}\subset\mathrm{\mathrm{AL}}_{d}(B)$
by definition, we see that $B\subseteq\mathrm{\mathrm{AL}}_{d}(B)$
and we are done. 

Recall that the \emph{Makar-Limanov invariant} $\mathrm{ML}(B)$ is
defined to be the intersection of the kernels of all locally nilpotent
derivations on $B$. One might think that $\mathrm{\mathrm{AL}}_{i\in\mathbb{N}}$-invariants,
in addition to the $\mathrm{\mathrm{ML}}$-invariant, cover all invariant
sub-algebras of $B$. This, however, is not correct, see \S \ref{Sec:A-huge}
below.

\subsection{Computing the $\partial$-filtration and its associated graded algebra}

\indent\newline\noindent  Here, given a finitely generated $k$-domain
$B$, we describe a general method which enables the computation of
the $\partial$-filtration for a locally nilpotent derivation with
finitely generated kernel. First we consider a more general situation
where the plinth ideal $\mathrm{pl}(\partial)$ is finitely generated
as an ideal in $\ker\partial$ then we deal with the case where $\ker\partial$
is itself finitely generated as a $k$-algebra.

\subsubsection{\emph{Let $B=k[X_{1},\ldots,X_{n}]/I=k[x_{1},\ldots,x_{n}]$ be a
finitely generated $k$-domain, and let $\partial\in\mathrm{LND}(B)$
be such that $\mathrm{pl}(\partial)$ is generated by precisely $m$
elements say $f_{1},\ldots,f_{m}$ as an ideal in $\ker\partial$.
Denote by $\mathcal{F}=\{\mathcal{F}_{i}\}_{i\in\mathbb{N}}$ the
$\partial$-filtration, then:}}

By definition $\mathcal{F}_{0}=\ker\partial$. Furthermore, given
elements $s_{i}\in\mathcal{F}_{1}$ such that $\partial(s_{i})=f_{i}$
for every $i\in\{1,\ldots,m\}$, it is straightforward to check that 

\textit{\emph{
\begin{eqnarray*}
\mathcal{F}_{1} & = & \mathcal{F}_{0}s_{1}+\ldots+\mathcal{F}_{0}s_{m}+\mathcal{F}_{0}.
\end{eqnarray*}
}}Letting $\deg_{\partial}(x_{i})=d_{i}$, we denote by $H_{j}$ the
$\mathcal{F}_{0}$-sub-module in $B$ generated by elements of degree
$j$ relative to $\deg_{\partial}$ of the form $s_{1}^{u_{1}}\ldots s_{m}^{u_{m}}x_{1}^{v_{1}}\ldots x_{n}^{v_{n}}$,
that is, 
\[
H_{j}:=\sum_{\sum_{l}u_{l}+\sum_{i}d_{i}.v_{i}=j}\mathcal{F}_{0}\left(s_{1}^{u_{1}}\ldots s_{m}^{u_{m}}x_{1}^{v_{1}}\ldots x_{n}^{v_{n}}\right)
\]
 where $u_{l},v_{i}\in\mathbb{N}$ for all $i$ and $l$. The integer
$\sum_{l}u_{l}+\sum_{i}d_{i}v_{i}$ is nothing but $\deg_{\partial}(s_{1}^{u_{1}}s_{2}^{u_{2}}\ldots.s_{m}^{u_{m}}x_{1}^{v_{1}}x_{2}^{v_{2}}.\ldots.x_{n}^{v_{n}})$.
Then we define a new $\mathbb{N}$-filration $\mathcal{G}=\{\mathcal{G}_{i}\}_{i\in\mathbb{N}}$
of $B$ by setting 
\[
\mathcal{G}_{i}=\sum_{j\leq i}H_{j}.
\]
By construction $\mathcal{G}_{i}\subseteq\mathcal{F}_{i}$ for all
$i\in\mathbb{N}$, with equality for $i=0$ and $i=1$. The following
result provides a characterization of when these two filtrations coincide:
\begin{lem}
\label{lem:equal-if-it-is-proper} The filtrations $\mathcal{F}$
and $\mathcal{G}$ are equal if and only if $\mathcal{G}$ is proper.\end{lem}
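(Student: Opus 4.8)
The forward implication is immediate: the $\partial$-filtration $\mathcal{F}$ is proper, since it corresponds to the $\mathbb{N}$-degree function $\deg_{\partial}$, so the equality $\mathcal{F}=\mathcal{G}$ forces $\mathcal{G}$ to be proper as well. All the content lies in the converse, and the plan is to reformulate the equality $\mathcal{F}=\mathcal{G}$ as the injectivity of a single graded algebra homomorphism and then force that injectivity by a transcendence-degree count. We may assume $\partial\neq 0$, the case $\partial=0$ being trivial.

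Assume $\mathcal{G}$ is proper and write $\omega$ for the associated $\mathbb{N}$-degree function. Since $\mathcal{G}_{i}\subseteq\mathcal{F}_{i}$ for all $i$, we have $\deg_{\partial}\leq\omega$ pointwise, so it remains to prove the reverse inequality. The inclusions $\mathcal{G}_{i}\subseteq\mathcal{F}_{i}$ induce a homomorphism of graded $\mathcal{F}_{0}$-algebras $\phi\colon\mathrm{Gr}_{\mathcal{G}}(B)\to\mathrm{Gr}_{\partial}(B)$ carrying the class of $b$ in $\mathcal{G}_{i}/\mathcal{G}_{i-1}$ to its class in $\mathcal{F}_{i}/\mathcal{F}_{i-1}$. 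A direct inspection of leading forms shows that for $b\neq 0$ one has $\phi(\mathrm{gr}_{\mathcal{G}}(b))=\mathrm{gr}_{\partial}(b)$ when $\deg_{\partial}(b)=\omega(b)$, and $\phi(\mathrm{gr}_{\mathcal{G}}(b))=0$ when $\deg_{\partial}(b)<\omega(b)$. Since every nonzero homogeneous element of $\mathrm{Gr}_{\mathcal{G}}(B)$ is of the form $\mathrm{gr}_{\mathcal{G}}(b)$, injectivity of $\phi$ is therefore equivalent to $\deg_{\partial}=\omega$, that is, to $\mathcal{F}=\mathcal{G}$. Thus it suffices to prove $\ker\phi=0$.

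This is where properness enters essentially: by the remark following Definition \ref{Def.Graded.lg}, properness of $\mathcal{G}$ means precisely that $\mathrm{Gr}_{\mathcal{G}}(B)$ is an integral domain. Let $d=\mathrm{trdeg}_{k}B$. Because $\mathrm{im}\,\phi\subseteq\mathrm{Gr}_{\partial}(B)$ is a domain, $\ker\phi$ is prime and $\mathrm{im}\,\phi\cong\mathrm{Gr}_{\mathcal{G}}(B)/\ker\phi$, and I would estimate the relevant transcendence degrees from opposite sides. For the upper bound, a standard leading-term computation applies: given any algebraic relation among $d+1$ homogeneous elements of $\mathrm{Gr}_{\mathcal{G}}(B)$, one lifts it to $B$ and passes to its top weighted-homogeneous part using the multiplicativity of $\omega$, producing a nontrivial relation among the given elements; hence $\mathrm{trdeg}_{k}\mathrm{Gr}_{\mathcal{G}}(B)\leq d$. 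For the lower bound, $\mathrm{im}\,\phi$ contains the degree-zero part $\mathcal{F}_{0}=\ker\partial$, of transcendence degree $d-1$, together with the leading form $\mathrm{gr}_{\partial}(s_{1})$ of a local slice. Since by Proposition \ref{Pro:Daigle} the induced derivation $\overline{\partial}$ of $\mathrm{Gr}_{\partial}(B)$ is locally nilpotent and satisfies $\overline{\partial}(\mathrm{gr}_{\partial}(s_{1}))=f_{1}\in\ker\partial\setminus\{0\}$, the standard fact that an element with nonzero derivative into the kernel is transcendental over the kernel gives that $\mathrm{gr}_{\partial}(s_{1})$ is transcendental over $\ker\partial$; therefore $\mathrm{trdeg}_{k}\mathrm{im}\,\phi\geq d$.

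Combining the two estimates finishes the argument: if $\ker\phi$ were nonzero, then, $\mathrm{Gr}_{\mathcal{G}}(B)$ being a domain of finite transcendence degree, the quotient by the nonzero prime $\ker\phi$ would satisfy $\mathrm{trdeg}_{k}\mathrm{im}\,\phi<\mathrm{trdeg}_{k}\mathrm{Gr}_{\mathcal{G}}(B)\leq d$, contradicting $\mathrm{trdeg}_{k}\mathrm{im}\,\phi\geq d$. Hence $\ker\phi=0$ and $\mathcal{F}=\mathcal{G}$. The main obstacle is exactly this last step: properness only yields that $\mathrm{Gr}_{\mathcal{G}}(B)$ is a domain, and a homomorphism out of a domain may well have a kernel, so this alone does not force $\phi$ to be injective. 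It is the transcendence-degree count, anchored by the transcendence of the leading form of a single local slice over $\ker\partial$, that upgrades the domain property into the injectivity of the comparison map $\phi$.
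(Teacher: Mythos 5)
Your proof is correct, but it takes a genuinely different route from the paper's. The paper settles the converse by a short direct computation resting on the local-slice representation lemma from Makar-Limanov's notes: given $f$ with $\deg_{\partial}(f)=i$, there exist $f_{0}\neq0$ and $a_{0},\ldots,a_{i}\in\mathcal{F}_{0}$ with $a_{i}\neq0$ such that $f_{0}f=a_{i}s^{i}+\cdots+a_{0}$; applying the degree function $\omega$ of $\mathcal{G}$ (whose multiplicativity is exactly properness, and which equals $0$ on $\mathcal{F}_{0}=\mathcal{G}_{0}$ and $1$ on local slices) forces $\omega(f)=i$, hence $f\in\mathcal{G}_{i}$. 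You instead encode the equality $\mathcal{F}=\mathcal{G}$ as injectivity of the comparison homomorphism $\phi\colon\mathrm{Gr}_{\mathcal{G}}(B)\to\mathrm{Gr}_{\partial}(B)$ and obtain injectivity from a transcendence-degree count: properness makes $\mathrm{Gr}_{\mathcal{G}}(B)$ a domain of transcendence degree at most $d=\mathrm{trdeg}_{k}B$, while $\mathrm{im}\,\phi$ contains $\ker\partial$ (of transcendence degree $d-1$) together with $\mathrm{gr}_{\partial}(s_{1})$, which is transcendental over $\ker\partial$ because the induced derivation of Proposition \ref{Pro:Daigle} sends it to $f_{1}\in\ker\partial\setminus\{0\}$; a nonzero (necessarily prime) kernel would then force a strict drop of transcendence degree, a contradiction. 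Both arguments hinge on a single local slice; yours trades the Makar-Limanov representation lemma for standard transcendence-degree facts, namely that $\mathrm{trdeg}_{k}\ker\partial=d-1$, that passing to the associated graded ring of a proper filtration does not raise transcendence degree, and that quotienting a domain by a nonzero prime strictly lowers it --- note that this last fact is usually cited for affine domains, and since $\ker\partial$ (hence $\mathrm{Gr}_{\mathcal{G}}(B)$) is not assumed finitely generated in this subsection you really need the general version, which fortunately follows from the same lifting argument (adjoin a nonzero element of the prime to lifted algebraically independent elements). Also, one phrasing slip in your upper bound: one lifts the $d+1$ homogeneous \emph{elements} to $B$, obtains a relation there, and pushes its top weighted-homogeneous part back down, rather than lifting a relation. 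What the paper's route buys is brevity and the explicit element-by-element identity $\omega=\deg_{\partial}$; what yours buys is a structural reformulation --- equality of the filtrations is detected by a single graded homomorphism --- which is very much in the spirit of the Kaliman--Makar-Limanov homogenization techniques the paper uses elsewhere (e.g., in Proposition \ref{Pro:when-it-is-proper}).
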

\begin{proof}
One direction is clear since $\mathcal{F}$ is proper. Conversely,
suppose that $\mathcal{G}$ is proper with the corresponding $\mathbb{N}$-degree
function $\omega$ on $B$ (see \S \ref{sub:Filtration-and-the}).
Given $f\in\mathcal{F}_{i}\setminus\mathcal{F}_{i-1}$, $i>1$, for
every local slice $s\in\mathcal{F}_{1}\setminus\mathcal{F}_{0}$,
there\textit{\emph{ exist $f_{0}\neq0,a_{i}\neq0,a_{i-1},\ldots,a_{0}\in\mathcal{F}_{0}$
such that}} $f_{0}f=a_{i}s^{i}+a_{i-1}s^{i-1}+\cdots+a_{0}$ (see
\cite[Proof of Lemma 4]{key-Makar}). Since $\omega(g)=0$ (resp.
$\omega(g)=1$) for every $g\in\mathcal{F}_{0}$ (resp. $g\in\mathcal{F}_{1}\setminus\mathcal{F}_{0}$),
we obtain 
\[
\omega(f)=\omega(f_{0}f)=\omega(a_{i}s^{i}+a_{i-1}s^{i-1}+\cdots+a_{0})=\max\{\omega(a_{i}s^{i}),\ldots,\omega(a_{0})\}=i,
\]
 and so $f\in\mathcal{G}_{i}$.
\end{proof}

\subsubsection{\emph{The twisting technique}}

\indent\newline\noindent  Next, we determine the $\partial$-filtration,
for a locally nilpotent derivation $\partial$ with finitely generated
kernel, by giving an effective criterion to decide when the $\mathbb{N}$-filtration
$\mathcal{G}$ defined above is proper.

Hereafter, we assume that $0\in\mathrm{Spec}(B)$ and that $\ker(\partial)$
is generated by elements $z_{j}\in B$ such that $z_{j}(0,\ldots,0)=0$,
$j\in\{1,\ldots,r\}$. Since $\ker(\partial)$ is finitely generated
$k$-algebra, the plinth ideal \textit{\emph{$\mathrm{pl}(\partial)$
is finitely generated. So there exist $s_{1}$, $\ldots$, $s_{m}$
$\in\mathcal{F}_{1}$ such that }}$\mathcal{F}_{1}=\mathcal{F}_{0}s_{1}+\ldots+\mathcal{F}_{0}s_{m}+\mathcal{F}_{0}$.
We can also assume that $s_{i}$ is irreducible and $s_{i}(0,\ldots,0)=0$
for all $i$.

Letting $J\subset k^{[r+n+m]}=k[Z_{1},\ldots,Z_{r}][X_{1},\ldots,X_{n}][S_{1},\ldots,S_{m}]$
be the ideal generated by $I$ and the elements $Z_{j}=z_{j}(X_{1},\ldots,X_{n})$,
$j\in\{1,\ldots,r\}$, $S_{i}=s_{i}(X_{1},\ldots,X_{n})$, $i\in\{1,\ldots,m\}$,
then we have 
\[
B=k[Z_{1},\ldots,Z_{r}][X_{1},\ldots,X_{n}][S_{1},\ldots,S_{m}]/J.
\]
Note that by construction $0\in\mathrm{Spec}(k^{[r+n+m]}/J)$. 

We define an $\mathbb{N}$-weight degree function $\omega$ on $k^{[r+n+m]}$
by declaring that $\omega(Z_{i})=0=\deg_{\partial}(z_{i})$ for all
$i\in\{1,\ldots,r\}$, $\omega(S_{i})=1=\deg_{\partial}(s_{i})$ for
all $i\in\{1,\ldots,m\}$, and $\omega(X_{i})=\deg_{\partial}(x_{i})=d_{i}$
for all $i\in\{1,\ldots,n\}$. The corresponding proper $\mathbb{N}$-filtration
$\mathcal{Q}_{i}:=\{P\in k^{[n]}\mid\omega(P)\leq i\}$, $i\in\mathbb{N}$,
on $k^{[r+n+m]}$ has the form $\mathcal{\mathcal{Q}}_{i}=\oplus_{j\leq i}\mathcal{H}_{j}$
where 
\[
\mathcal{H}_{j}:=\oplus_{\sum_{j}u_{j}+\sum_{i}d_{i}v_{i}=j}k[Z_{1},\ldots,Z_{r}]S_{1}^{u_{1}}\ldots S_{m}^{u_{m}}X_{1}^{v_{1}}\ldots X_{n}^{v_{n}}.
\]
 By construction $\pi\left(\mathcal{\mathcal{Q}}_{i}\right)=\mathcal{G}_{i}$
where $\pi:k^{[r+n+m]}\longrightarrow B$ denotes the natural projection.
Indeed, since 
\[
\pi\left(\mathcal{\mathcal{Q}}_{i}\right)=\sum_{j\leq i}\pi\left(\mathcal{H}_{j}\right)
\]
 and $\pi\left(\mathcal{H}_{j}\right)=\sum_{\sum_{j}u_{j}+\sum_{i}d_{i}v_{i}=j}k[z_{1},\ldots,z_{r}]s_{1}^{u_{1}}\ldots.s_{m}^{u_{m}}x_{1}^{v_{1}}\ldots x_{n}^{v_{n}}$,
we get 
\[
\pi\left(\mathcal{H}_{j}\right)=\sum_{\sum_{j}u_{j}+\sum_{i}d_{i}v_{i}=j}\left(\ker\partial\right)s_{1}^{u_{1}}.\ldots s_{m}^{u_{m}}x_{1}^{v_{1}}.\ldots x_{n}^{v_{n}}
\]
 which means precisely that $\pi\left(\mathcal{\mathcal{Q}}_{i}\right)=\mathcal{G}_{i}$.

Let $\hat{J}\subset k^{[r+n+m]}$ be the homogeneous ideal generated
by the highest homogeneous components relative to $\omega$ of all
elements in $J$. Then we have the following result, which is inspired
by the technique developed by S. Kaliman and L. Makar-Limanov:
\begin{prop}
\label{Pro:when-it-is-proper} The $\mathbb{N}$-filration $\mathcal{G}$
is proper if and only if $\hat{J}$ is prime.\end{prop}
\begin{proof}
It is enough to show that $\mathcal{G}=\{\pi\left(\mathcal{\mathcal{Q}}_{i}\right)\}_{i\in\mathbb{N}}$
coincides with the filtration corresponding to the $\mathbb{N}$-semi-degree
function $\omega_{B}$ on $B$ defined by 
\[
\omega_{B}(p):=\min_{P\in\pi^{-1}(p)}\{\omega(P)\}.
\]
Indeed, if so, the result will follow from \cite[Lemma 3.2]{key-Kaliman Makar}
which asserts in particular that $\omega_{B}$ is an $\mathbb{N}$-degree
function on $B$ if and only if $\hat{J}$ is prime. Let $\{\mathcal{G}_{i}^{'}\}_{i\in\mathbb{N}}$
be the filtration corresponding to $\omega_{B}$. Given $f\in\mathcal{G}_{i}^{'}$
there exists $F\in\mathcal{\mathcal{Q}}_{i}$ such that $\pi(F)=f$,
which means that $\mathcal{G}_{i}^{'}\subset\pi\left(\mathcal{\mathcal{Q}}_{i}\right)$.
Conversely, it is clear that $\omega_{B}(z_{i})=\omega(Z_{i})=0$
for all $i\in\{1,\ldots,r\}$. Furthermore $\omega_{B}(s_{i})=\omega(S_{i})=1$
for all $i\in\{1,\ldots,m\}$, for otherwise $s_{i}\in\ker\partial$
which is absurd. Finally, if $d_{i}\neq0$ and $\omega_{B}(x_{i})<\omega(X_{i})=d_{i}\neq0$,
then $x_{i}\in\pi\left(\mathcal{\mathcal{Q}}_{d_{i}-1}\right)\subset\ker\partial^{d_{i}-1}$
which implies that $\deg_{\partial}(x_{i})<d_{i}$, a contradiction.
So $\omega_{B}(x_{i})=d_{i}$. Thus $\omega_{B}(f)\leq i$ for every
$f\in\pi\left(\mathcal{\mathcal{Q}}_{i}\right)$ which means that
$\pi\left(\mathcal{\mathcal{Q}}_{i}\right)\subset\mathcal{G}_{i}^{'}$.
\end{proof}
The next Proposition, which is a reinterpretation of \cite[Prop. 4.1]{key-Kaliman Makar},
describes the associated graded algebra $\mathrm{Gr}_{\partial}(B)$
of the filtered algebra $\left(B,\mathcal{F}\right)$ in the case
where the $\mathbb{N}$-filtration $\mathcal{G}$ is proper:
\begin{prop}
\label{prop:graded algebra} If the $\mathbb{N}$-filtration $\mathcal{G}$\textup{
}\textup{\emph{is proper then }}\textup{$\mathrm{Gr}_{\partial}(B)\simeq k^{[r+n+m]}/\hat{J}$.} \end{prop}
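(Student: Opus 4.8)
The plan is to exhibit $\mathrm{Gr}_{\partial}(B)$ as the image of the associated graded of the weight-filtered polynomial ring $\left(k^{[r+n+m]},\mathcal{Q}\right)$ under the graded homomorphism induced by the projection $\pi$, and then to identify the kernel of that map with $\hat{J}$. First I would use the hypothesis: since $\mathcal{G}$ is proper, Lemma \ref{lem:equal-if-it-is-proper} gives $\mathcal{F}=\mathcal{G}$, so $\mathrm{Gr}_{\partial}(B)=\mathrm{Gr}_{\mathcal{G}}(B)$, the graded algebra of $B$ attached to the filtration $\mathcal{G}=\{\pi(\mathcal{Q}_{i})\}_{i\in\mathbb{N}}$. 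It therefore suffices to construct an isomorphism $\mathrm{Gr}_{\mathcal{G}}(B)\simeq k^{[r+n+m]}/\hat{J}$.

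Next I would record that, because $\omega$ is a weight degree function, $k^{[r+n+m]}$ is already $\mathbb{N}$-graded by $\omega$: one has $\mathcal{Q}_{i}=\oplus_{j\leq i}\mathcal{H}_{j}$ with $\mathcal{H}_{j}$ the $\omega$-homogeneous component of weight $j$, whence $\mathcal{Q}_{i}/\mathcal{Q}_{i-1}\simeq\mathcal{H}_{i}$ and the associated graded ring $\mathrm{Gr}_{\omega}(k^{[r+n+m]})=\oplus_{i}\mathcal{Q}_{i}/\mathcal{Q}_{i-1}$ is canonically isomorphic to $k^{[r+n+m]}$ itself, the class of a monomial corresponding to the monomial. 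Since $\pi(\mathcal{Q}_{i})=\mathcal{G}_{i}$, the projection $\pi$ is a filtered homomorphism and induces a degree-zero graded $k$-algebra homomorphism $\overline{\pi}:\mathrm{Gr}_{\omega}(k^{[r+n+m]})\longrightarrow\mathrm{Gr}_{\mathcal{G}}(B)$, $\overline{P}\mapsto\overline{\pi(P)}$. Surjectivity is immediate from $\pi(\mathcal{Q}_{i})=\mathcal{G}_{i}$: every homogeneous element of $\mathrm{Gr}_{\mathcal{G}}(B)$ of degree $i$ is $\overline{g}$ for some $g\in\mathcal{G}_{i}$, and any $P\in\mathcal{Q}_{i}$ with $\pi(P)=g$ satisfies $\overline{\pi}(\overline{P})=\overline{g}$.

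The crux is the kernel computation. In degree $i$, an element $\overline{P}$ with $P\in\mathcal{Q}_{i}$ lies in $\ker\overline{\pi}$ if and only if $\pi(P)\in\mathcal{G}_{i-1}=\pi(\mathcal{Q}_{i-1})$, that is (using $\ker\pi=J$) if and only if $P\in\mathcal{Q}_{i-1}+J$; so the degree-$i$ part of $\ker\overline{\pi}$ is $\big(\mathcal{Q}_{i}\cap(\mathcal{Q}_{i-1}+J)\big)/\mathcal{Q}_{i-1}$. I would then invoke the elementary modular identity
\[
\mathcal{Q}_{i}\cap(\mathcal{Q}_{i-1}+J)=\mathcal{Q}_{i-1}+(\mathcal{Q}_{i}\cap J),
\]
whose nontrivial inclusion writes $a=b+j$ with $b\in\mathcal{Q}_{i-1}\subset\mathcal{Q}_{i}$ and $j=a-b\in\mathcal{Q}_{i}\cap J$. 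This rewrites the degree-$i$ part of $\ker\overline{\pi}$ as $\big(\mathcal{Q}_{i-1}+(\mathcal{Q}_{i}\cap J)\big)/\mathcal{Q}_{i-1}$, i.e. the $k$-span of the weight-$i$ components of the elements $P\in J$ with $\omega(P)\leq i$, which is exactly the span of the $\omega$-leading forms $\hat{P}$ of those $P\in J$ with $\omega(P)=i$. Because $\omega$ is a genuine degree function on the domain $k^{[r+n+m]}$, leading forms are multiplicative, $\overline{PH}=\overline{P}\,\overline{H}$; hence this graded collection of leading forms is closed under addition and under multiplication by homogeneous elements, so it already coincides, degree by degree, with the ideal it generates, namely $\hat{J}$. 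Thus $\ker\overline{\pi}=\hat{J}$, and the first isomorphism theorem yields $\mathrm{Gr}_{\partial}(B)=\mathrm{Gr}_{\mathcal{G}}(B)\simeq\mathrm{Gr}_{\omega}(k^{[r+n+m]})/\hat{J}\simeq k^{[r+n+m]}/\hat{J}$.

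I expect the main obstacle to be precisely this last step: verifying that $\ker\overline{\pi}$ equals $\hat{J}$ rather than merely contains the leading forms of a generating set of $J$. The two substantive points, the modular identity $\mathcal{Q}_{i}\cap(\mathcal{Q}_{i-1}+J)=\mathcal{Q}_{i-1}+(\mathcal{Q}_{i}\cap J)$ and the fact that the graded space of $\omega$-leading forms of $J$ is already an ideal equal to $\hat{J}$, both rest on $\omega$ being multiplicative on the polynomial ring; the remaining verifications (well-definedness and the graded ring-homomorphism property of $\overline{\pi}$) are routine.
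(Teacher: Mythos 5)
Your proof is correct, but it takes a genuinely different route from the paper: where you give a self-contained argument, the paper simply cites \cite[Prop. 4.1]{key-Kaliman Makar} for the statement that the graded algebra of the filtered algebra $\left(B,\mathcal{G}\right)$ is $k^{[r+n+m]}/\hat{J}$, and then, exactly as you do, invokes Lemma \ref{lem:equal-if-it-is-proper} to identify $\mathrm{Gr}_{\partial}(B)$ with $\mathrm{Gr}_{\mathcal{G}}(B)$. In effect you have reproved the cited Kaliman--Makar-Limanov result in this setting: the graded surjection $\overline{\pi}:\mathrm{Gr}_{\omega}(k^{[r+n+m]})\simeq k^{[r+n+m]}\longrightarrow\mathrm{Gr}_{\mathcal{G}}(B)$, the modular identity $\mathcal{Q}_{i}\cap(\mathcal{Q}_{i-1}+J)=\mathcal{Q}_{i-1}+(\mathcal{Q}_{i}\cap J)$, and the identification of the graded space of $\omega$-leading forms of $J$ with the ideal $\hat{J}$ are all sound, the last two resting, as you say, on multiplicativity of $\omega$ on the polynomial domain. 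What the paper's approach buys is brevity; what yours buys is transparency: it makes visible exactly where the hypotheses $\pi\left(\mathcal{Q}_{i}\right)=\mathcal{G}_{i}$, $\ker\pi=J$, and the weight-grading of $k^{[r+n+m]}$ enter, and it isolates the one genuinely delicate point --- that the kernel is the ideal generated by leading forms of \emph{all} elements of $J$, not merely of a generating set --- which is precisely the subtlety that makes $\hat{J}$ (and the primality test of Proposition \ref{Pro:when-it-is-proper}) the right object. Your argument would serve as a readable substitute for the external citation.
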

\begin{proof}
By virtue of $($\cite[Prop. 4.1]{key-Kaliman Makar}$)$ the graded
algebra associated to the filtered algebra $\left(B,\mathcal{G}\right)$
is isomorphic to $k^{[r+n+m]}/\hat{J}$. So the assertion follows
from Lemma \ref{lem:equal-if-it-is-proper}. 
\end{proof}

\section{\textbf{Semi-Rigid $k$-Domains}}

\subsection{Definitions and basic properties}

\indent\newline\noindent  In \cite{key-David Maubach}, D. Finston
and S. Maubach considered rings $B$ whose sets of locally nilpotent
derivations are \textquotedblleft{}one-dimensional\textquotedblright{}
in the sense that $\mathrm{LND}(B)=\ker(\partial).\partial$ for some
non-zero $\partial\in\mathrm{LND}(B)$. They called them \emph{almost-rigid}
rings. Hereafter, we consider the following definition which seems
more natural in our context (see Prop. \ref{Pro:semi-rigid one dimensional}
below for a comparison between the two notions). 
\begin{defn}
\label{Def:semi-rigid} A commutative domain $B$ over a field $k$
of characteristic zero is called \emph{semi-rigid} if all non-zero
locally nilpotent derivations of $B$ induce the same \textit{\emph{proper
$\mathbb{N}$-filtration $($equivalently, the same $\mathbb{N}$-degree
function$)$.}} 

\noindent  The unique proper \textit{\emph{$\mathbb{N}$-filtration
of a semi-rigid $k$-domain $B$, that corresponds to any non-zero
}}$\partial\in\mathrm{LND}(B)$, will be referred to and called the
\emph{unique }$\mathrm{LND}$\emph{-filtration} of $B$.
\end{defn}
Semi-rigid $k$-domains $B$ can be equivalently characterized in
terms of their \emph{Makar-Limanov invariant }$\mathrm{ML}(B):=\cap_{D\in\mathrm{\mathrm{LND}}(B)}\ker(D)$
as follows:
\begin{prop}
\label{prop:semi-rigid=00003DML=00003Dker} A $k$-domain $B$ is
semi-rigid if and only if $\mathrm{ML}(B)=\ker(\partial)$ for any
non-zero $\partial\in\mathrm{LND}(B)$.\end{prop}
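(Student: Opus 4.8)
The plan is to read the equivalence as a comparison of two ways of saying that all non-zero locally nilpotent derivations of $B$ carry the same information: the Makar-Limanov side only records the kernels $\mathcal{F}_{0}=\ker\partial$, while semi-rigidity records the entire filtration $\mathcal{F}_{i}=\ker\partial^{i+1}$. The forward implication is immediate. If $B$ is semi-rigid, every non-zero $D\in\mathrm{LND}(B)$ induces the unique $\mathrm{LND}$-filtration, so in particular $\ker D=\mathcal{F}_{0}=\ker\partial$ for each non-zero $D$; since $\ker 0=B$ does not constrain the intersection, $\mathrm{ML}(B)=\bigcap_{D\in\mathrm{LND}(B)}\ker D=\bigcap_{D\neq 0}\ker D=\ker\partial$. (When $B$ carries no non-zero locally nilpotent derivation both conditions hold vacuously.)

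For the converse, suppose $\mathrm{ML}(B)=\ker\partial$ for every non-zero $\partial$. Applying this to two arbitrary non-zero $D_{1},D_{2}$ gives $\ker D_{1}=\mathrm{ML}(B)=\ker D_{2}$, so the hypothesis is exactly that all non-zero locally nilpotent derivations share a single kernel $A:=\mathrm{ML}(B)$. Hence the whole content of the converse is the implication: \emph{if $D_{1},D_{2}\in\mathrm{LND}(B)\setminus\{0\}$ satisfy $\ker D_{1}=\ker D_{2}=A$, then $\deg_{D_{1}}=\deg_{D_{2}}$.} To prove this I would invoke the local slice theorem. Choosing a local slice $s$ for $D_{1}$, with $D_{1}(s)=f\in A\setminus\{0\}$, the localized derivation $f^{-1}D_{1}$ has $s$ as a genuine slice, so $B_{f}=A_{f}[s]$ is a polynomial ring and $f^{-1}D_{1}$ acts as $d/ds$. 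As $f\in\ker D_{1}$ one has $(f^{-1}D_{1})^{i}=f^{-i}D_{1}^{i}$, whence $D_{1}^{i}(b)=0\Leftrightarrow (d/ds)^{i}(b)=0$; therefore $\deg_{D_{1}}(b)$ equals the $s$-degree of $b$ in $A_{f}[s]$, in the spirit of the representation already used in the proof of Lemma~\ref{lem:equal-if-it-is-proper}. Symmetrically, a slice $t$ for $D_{2}$ with $D_{2}(t)=g\in A\setminus\{0\}$ yields $B_{g}=A_{g}[t]$ and $\deg_{D_{2}}(b)=\deg_{t}(b)$.

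The crux --- and the step I expect to be the main obstacle --- is the comparison of these two polynomial presentations. Localizing at $fg$ produces, \emph{using $\ker D_{1}=\ker D_{2}$ decisively}, the single ring $A_{fg}[s]=B_{fg}=A_{fg}[t]$ with one common coefficient ring $A_{fg}$. Consequently $t$ is a polynomial in $s$ over $A_{fg}$ and $s$ is a polynomial in $t$ over the same $A_{fg}$; since the composite substitution is the identity and the degree of a composite of polynomials multiplies, both substitutions must have degree one. Thus $t=\mu s+\nu$ with $\mu,\nu\in A_{fg}$ and $\mu$ a unit, an affine change of coordinate over the coefficient ring. Such a change preserves the degree, so $\deg_{s}(b)=\deg_{t}(b)$ for all $b$, i.e. $\deg_{D_{1}}=\deg_{D_{2}}$. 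Every non-zero $D\in\mathrm{LND}(B)$ therefore induces one and the same filtration, which is semi-rigidity. The delicate points to secure are the identification of $\deg_{D_{i}}$ with the respective slice-degree and the fact that the common kernel forces the two slices to be affinely --- not merely birationally --- related; it is precisely this rigidity, traceable to $A$ being the shared kernel that makes the coefficient rings coincide, that prevents a higher-degree (Möbius-type) relation and closes the argument.
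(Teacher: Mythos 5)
Your proposal is correct, and its converse direction takes a genuinely different route from the paper's. The paper handles that direction in two lines by citing \cite[Principle 12]{key-gene}: if $D,E\in\mathrm{LND}(B)\setminus\{0\}$ have the same kernel $A$, then $aD=bE$ for some non-zero $a,b\in A$; since $a,b$ lie in the common kernel, $a^{n}D^{n}=b^{n}E^{n}$ for all $n$, and because $B$ is a domain this gives $\ker D^{n+1}=\ker E^{n+1}$ for every $n$, i.e.\ equality of the two filtrations. You instead re-derive this common-kernel rigidity from the local slice theorem: $B_{f}=A_{f}[s]$ identifies $\deg_{D_{1}}$ with the $s$-degree (via $(f^{-1}D_{1})^{i}=f^{-i}D_{1}^{i}$), the hypothesis $\ker D_{1}=\ker D_{2}=A$ forces the two localized presentations $A_{fg}[s]=B_{fg}=A_{fg}[t]$ to share literally the same coefficient ring, and multiplicativity of polynomial degree under composition (valid since $A_{fg}$ is a domain, so leading coefficients cannot collapse) forces $t=\mu s+\nu$ with $\mu$ a unit, whence $\deg_{s}=\deg_{t}$ and the filtrations agree on $B$. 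In effect you reprove, in the special case needed, the content behind Principle 12 rather than quoting it; this costs length but buys a self-contained argument (modulo the slice theorem) that makes the mechanism transparent --- shared kernel pins the coefficient ring, which pins the slice up to an affine change, which pins the degree function. The forward direction and the vacuous rigid case are handled in both treatments the same way, essentially by definition.
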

\begin{proof}
Given $D,E\in\mathrm{LND}(B)\setminus\{0\}$ such that $A:=\ker(D)=\ker(E)$,
there exist non-zero elements $a,b\in A$ such that $aD=bE$ (\cite[Principle 12]{key-gene})
which implies that the $D$-filtration is equal to the $E$-filtration.
So if $\mathrm{ML}(B)=\ker(\partial)$ for any non-zero $\partial\in\mathrm{LND}(B)$
then $B$ is semi-rigid. The other implication is clear by definition. 
\end{proof}
Recall that $D\in\mathrm{Der}_{k}(B)$ is \textit{irreducible} if
and only if $D(B)$ is contained in no proper principal ideal of $B$,
and that $B$ is said to satisfy the ascending chain condition (ACC)
on principal ideals if and only if every infinite chain $(b_{1})\subset(b_{2})\subset(b_{3})\subset\cdots$
of principal ideals of $B$ eventually stabilizes. $B$ is said to
be a \textit{highest common factor ring}, or HCF-ring, if and only
if the intersection of any two principal ideals of $B$ is again principal.
\begin{prop}
\label{Pro:semi-rigid one dimensional} Let $B$ be a semi-rigid $k$-domain
satisfying the ACC on principal ideals. If $\mathrm{ML}(B)$ is an
HCF-ring, then there exists a unique irreducible $\partial\in\mathrm{LND}(B)$
up to multiplication by unit. Consequently, every $D\in\mathrm{LND}(B)$
has the form $D=f\partial$ for some $f\in\ker(\partial)$, and so
$B$ is almost rigid.\end{prop}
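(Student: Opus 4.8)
The plan is to begin with the elementary mechanism that lets one divide a non-zero locally nilpotent derivation by an element. Precisely, I would first prove a \emph{division lemma}: if $\partial\in\mathrm{LND}(B)\setminus\{0\}$ and $\partial=bD'$ for some $b\in B\setminus\{0\}$ and some $k$-derivation $D'$ of $B$, then $b\in\ker\partial$ and $D'\in\mathrm{LND}(B)$ with $\ker D'=\ker\partial$. This is short using the degree function $\deg_{\partial}$ (which every non-zero element has, since $\partial$ is locally nilpotent): applying $\partial$ to $b$ gives $\deg_{\partial}(\partial(b))=\deg_{\partial}(b)+\deg_{\partial}(D'(b))$, and since $\deg_{\partial}(\partial(b))=\deg_{\partial}(b)-1$ whenever $b\notin\ker\partial$, this forces $D'(b)=0$, hence $b\in\ker\partial$; once $b\in\ker\partial=\ker D'$ one has $\partial^{n}=b^{n}D'^{n}$, so local nilpotency of $\partial$ passes to $D'$ because $B$ is a domain. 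For later use I would also record, writing $A:=\ker\partial=\mathrm{ML}(B)$ (Proposition \ref{prop:semi-rigid=00003DML=00003Dker}), that $A^{\ast}=A\cap B^{\ast}$ (an inverse of an element of $A$ again lies in $\ker\partial$), that consequently a strictly ascending chain of principal ideals of $A$ induces one in $B$ so that $A$ inherits the ACC on principal ideals, and that $A$, being a GCD-domain (HCF-ring) with ACCP, is in fact a UFD.

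Next I would establish existence of an irreducible locally nilpotent derivation. Starting from any non-zero $\partial_{0}\in\mathrm{LND}(B)$, if $\partial_{0}$ is not irreducible then $\partial_{0}(B)\subseteq(b_{1})$ for a non-unit $b_{1}$, and the division lemma writes $\partial_{0}=b_{1}\partial_{1}$ with $\partial_{1}\in\mathrm{LND}(B)$ and $b_{1}\in A$. Iterating gives $\partial_{0}=(b_{1}\cdots b_{i})\partial_{i}$. Fixing once and for all an $s$ with $\partial_{0}(s)\neq0$, the identities $\partial_{0}(s)=(b_{1}\cdots b_{i})\partial_{i}(s)$ force every $\partial_{i}(s)\neq0$ and produce an ascending chain $(\partial_{0}(s))\subseteq(\partial_{1}(s))\subseteq(\partial_{2}(s))\subseteq\cdots$ of principal ideals, with $(\partial_{i}(s))=(\partial_{i+1}(s))$ only if the non-unit $b_{i+1}$ is a unit; so the chain is strictly ascending. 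By the ACC on principal ideals the procedure terminates, yielding an irreducible $\partial\in\mathrm{LND}(B)$.

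It remains to prove uniqueness of the irreducible derivation up to a unit; the assertion $D=f\partial$ then follows by applying the existence argument to an arbitrary non-zero $D$ to write $D=g\partial'$ with $g\in A$ and $\partial'$ irreducible, and replacing $\partial'$ by its unit multiple $\partial$, the membership $f\in\ker\partial$ being automatic from the division lemma (and ``$B$ is almost rigid'' is exactly the resulting equality $\mathrm{LND}(B)=\ker(\partial)\cdot\partial$). So let $\partial,\partial'$ be irreducible. Since $\ker\partial=\ker\partial'=A$, the principle used in the proof of Proposition \ref{prop:semi-rigid=00003DML=00003Dker} gives non-zero $\alpha,\beta\in A$ with $\alpha\partial'=\beta\partial$, and dividing by $\gcd_{A}(\alpha,\beta)$ I may take $\alpha,\beta$ coprime in the UFD $A$. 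Because $B$ is semi-rigid, $\partial$ and $\partial'$ share the same filtration and hence the same local slices; for each local slice $s$ one gets $\alpha\,\partial'(s)=\beta\,\partial(s)$ in $A$, and coprimeness forces $\partial(s)=\alpha\tau_{s}$ and $\partial'(s)=\beta\tau_{s}$ for a common $\tau_{s}\in A$. Since $\mathrm{pl}(\partial)=\partial(\ker\partial^{2})$ is exactly the ideal of $A$ generated by the $\partial(s)$, this reads $\mathrm{pl}(\partial)=\alpha\,\mathfrak{t}$ and $\mathrm{pl}(\partial')=\beta\,\mathfrak{t}$ for the ideal $\mathfrak{t}=\alpha^{-1}\mathrm{pl}(\partial)$ of $A$.

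The proof is then completed by what I expect to be the main obstacle, a \emph{plinth claim}: for an irreducible $\partial$, the ideal $\mathrm{pl}(\partial)$ is contained in no proper principal ideal of $A$, equivalently $\gcd_{A}\mathrm{pl}(\partial)$ is a unit. Granting it, $\alpha\,\mathfrak{t}=\mathrm{pl}(\partial)\subseteq(\alpha)$ forces $\alpha\in A^{\ast}$, and symmetrically $\beta\in A^{\ast}$, so $\partial'=(\beta/\alpha)\partial$ is a unit multiple of $\partial$. To prove the claim I would argue contrapositively: if a prime $\pi\in A$ divided every element of $\mathrm{pl}(\partial)$, I would try to show $\pi\mid\partial(x)$ for all $x\in B$, contradicting irreducibility (note $\pi$ is a non-unit of $B$ since $A^{\ast}=A\cap B^{\ast}$). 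The cases $\deg_{\partial}(x)\le1$ and the top iterate $\partial^{\deg_{\partial}(x)}(x)$ are immediate, as both land in $\mathrm{pl}(\partial)$. The genuine difficulty is propagating divisibility by $\pi$ from the degree-$\le1$ part $\mathrm{pl}(\partial)$ to the whole image $\partial(B)$: since $B$ need not be a UFD, $\pi$ need not be prime in $B$, and the natural local-slice localization $B[1/t]=A[1/t][s]$ (with $\partial=t\,\mathrm{d}/\mathrm{d}s$, where $t=\partial(s)$ is divisible by $\pi$) makes $\pi$ invertible and thereby hides precisely the divisibility one wants to detect. Closing this step is where one must genuinely combine the ACC on principal ideals of $B$ with the UFD structure of $A=\mathrm{ML}(B)$ to descend divisibility from $B[1/t]$ back to $B$ — for instance via a $\pi$-adically minimal local slice controlling the $t$-denominators — and I expect this descent to be the crux of the entire proposition.
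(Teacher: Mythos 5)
Your division lemma and your existence argument are correct, and they coincide with the paper's treatment of existence, which simply cites \cite[Prop.~2.2 and Principle 7]{key-gene} for exactly this ACC-based iteration. The gap is where you yourself locate it, in the uniqueness step, and it is worse than ``unproven'': the plinth claim you reduce to is false, and the counterexamples are the central objects of this very paper. For the rings $R_{n,e,P,Q}$ the paper exhibits an \emph{irreducible} $\partial\in\mathrm{LND}(R_{n,e,P,Q})$ whose plinth ideal is $\mathrm{pl}(\partial)=\langle x^{n+e}\rangle$, which is contained in the proper principal ideal $xA$ of $A=\mathrm{ML}(R_{n,e,P,Q})=k[x]$; the same happens for the Danielewski domains $B_{n,P}$, $n\geq2$, where $\partial(s)=x^{n}$ generates the plinth while irreducibility is certified by the element $\partial(y)=\partial P/\partial s$, which has no common non-unit factor with $x^{n}$. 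Note also that when $A$ is a PID, as in all these examples, your condition ``$\gcd_{A}\mathrm{pl}(\partial)$ is a unit'' is equivalent to $1\in\mathrm{pl}(\partial)$, i.e.\ to $\partial$ having a slice, which would force $B\simeq A^{[1]}$; so your claim would make every semi-rigid domain carrying an irreducible $\mathrm{LND}$ a polynomial ring over its Makar-Limanov invariant, absurd already for $B_{2,S^{2}+1}$. The structural reason your route cannot close is that irreducibility is a condition on the whole image $\partial(B)$, typically witnessed by elements of $\deg_{\partial}\geq2$, whereas the plinth ideal only sees the degree-$\leq1$ part of the $\mathrm{LND}$-filtration; no argument confined to local slices can detect it.

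The paper's uniqueness proof is arranged precisely so that irreducibility is applied to the full derivations rather than to their plinths. Given irreducible $D,E$ with $\ker(D)=\ker(E)=A=\mathrm{ML}(B)$, it takes non-zero $a,b\in A$ with $aD=bE$ (\cite[Principle 12]{key-gene}), uses the HCF hypothesis on $A$ to write $aA\cap bA=cA$, observes that $T:=aD=bE$ satisfies $T(B)\subset cB$, hence $T=cT_{0}$ for some $T_{0}\in\mathrm{LND}(B)$ (\cite[Principle 7]{key-gene}), and then from $c=as=bt$ cancels $a$ and $b$ in the domain $B$ to get $D=sT_{0}$ and $E=tT_{0}$; irreducibility of $D$ and $E$ now says directly that $s$ and $t$ are units, so both are unit multiples of $T_{0}$. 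Your preliminary normalizations (coprimality of $\alpha,\beta$ in the UFD $A$, the common factor $\tau_{s}$ for each local slice) are correct as far as they go, but the conclusion you want cannot follow from them; the repair is to abandon $\mathrm{pl}(\partial)$ and instead divide the single derivation $T=aD=bE$ by the generator $c$ of $aA\cap bA$ --- that is exactly what the HCF hypothesis is for.
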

\begin{proof}
Existence follows from the fact that since $B$ satisfies the ACC
on principal ideals, then for every non-zero $T\in\mathrm{LND}(B)$,
there exists an irreducible $T_{0}\in\mathrm{LND}(B)$ and $c\in\ker(T)$
such that $T=cT_{0}$. (\cite[Prop. 2.2 and Principle 7]{key-gene}).
The argument for uniqueness is similar to that in \cite[ Prop. 2.2.b]{key-gene},
but with a little difference, that is, in \cite{key-gene} it is assumed
that $B$ itself is an HCF-ring while here we only require that $\mathrm{ML}(B)$
is an HCF-ring. Namely, let $D,E\in\mathrm{LND}(B)$ be irreducible
derivations, and let $A=\mathrm{ML}(B)$. By hypothesis $\ker(D)=\ker(E)=A$,
so there exist non-zero $a,b\in A$ such that $aD=bE$ (\cite[ Principle 12]{key-gene}).
Here we can assume that $a,b$ are not units otherwise we are done.
Set $T=aD=bE$. Since $A$ is an HCF-ring, there exists $c\in A$
such that $aA\cap bA=cA$. Therefore, $T(B)\subset cB$, and there
exists $T_{0}\in\mathrm{LND}(B)$ such that $T=cT_{0}$. Write $c=as=bt$
for $s,t\in B$. Then $cT_{0}=asT_{0}=aD$ implies $D=sT_{0}$, and
likewise $E=tT_{0}$. By irreducibility, $s$ and $t$ are units of
$B$, and we are done.
\end{proof}

\subsection{Elementary examples of semi-rigid $k$-domains}

\subsubsection{\textbf{\emph{Polynomial rings in one variable over rigid $k$-domains}}}

Recall that a $k$-domain $A$ is called \emph{rigid} if the zero
derivation is the only locally nilpotent $k$-derivation of $A$.
Equivalently, $A$ is rigid if and only if $\mathrm{ML}(A)=A$. The
next Proposition, which is due to Makar-Limanov (\cite[Lemma 21]{key-Makar},
also \cite[Theorem 3.1]{key-Crachiola Makar}), presents the simplest
examples of semi-rigid $k$-domains.
\begin{prop}
\textbf{\emph{$($Makar-Limanov$)$}} Let $A$ be a rigid domain of
finite transcendence degree over a field $k$ of characteristic zero.
Then the polynomial ring $A[x]$ is semi-rigid.\end{prop}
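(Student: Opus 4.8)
The plan is to show that any non-zero locally nilpotent derivation $D$ of $A[x]$ induces the same $\mathbb{N}$-filtration, and by Proposition \ref{prop:semi-rigid=00003DML=00003Dker} it suffices to prove that $\ker(D) = A$ for every non-zero $D \in \mathrm{LND}(A[x])$. The key structural fact I would exploit is that since $A$ is rigid, the restriction of $D$ to $A$ cannot be locally nilpotent unless it is identically zero; so the first task is to control how $D$ acts on the coefficient ring $A$.

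First I would consider the standard $x$-degree filtration on $A[x]$ and let $\partial_x$ denote the usual derivation $\tfrac{d}{dx}$, which is clearly locally nilpotent with kernel $A$. Given an arbitrary non-zero $D \in \mathrm{LND}(A[x])$, the heart of the argument is to prove that $D(A) \subseteq A$, i.e.\ that $D$ preserves the coefficient ring. The natural approach is to apply Proposition \ref{Pro:Daigle} (Daigle) to pass to the associated graded algebra $\mathrm{Gr}_{\partial_x}(A[x])$ relative to the $x$-degree filtration. Since $A$ is rigid, $A[x]$ has finite transcendence degree over $k$ exactly when $A$ does, so the hypothesis of Daigle's Proposition applies; the induced derivation $\overline{D}$ is then a non-zero homogeneous locally nilpotent derivation of the graded ring, which here is again $A[x]$ with $A$ in degree zero. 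The homogeneity forces $\overline{D}$ to act in a controlled way, and because the degree-zero part $A$ is rigid, one deduces that $D$ must lower $x$-degree rather than act nontrivially on $A$.

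Once $D(A) \subseteq A$ is established, the restriction $D|_A$ is a $k$-derivation of $A$, and I would argue it is locally nilpotent: for $a \in A$, local nilpotency of $D$ on $A[x]$ gives $D^n(a) = 0$ in $A[x]$, and since $D(A)\subseteq A$ this holds in $A$. Rigidity of $A$ then forces $D|_A = 0$, so $A \subseteq \ker(D)$. It remains to rule out $\ker(D)$ being strictly larger. Since $A \subseteq \ker(D)$, the derivation $D$ is $A$-linear and determined by $D(x) =: g \in A[x]$; local nilpotency forces $g \in A$ (otherwise $D^n(x)$ grows in degree and never vanishes), so $D = g\,\partial_x$ with $g \in A$ non-zero. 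Its kernel is exactly $A$, which completes the verification that $\mathrm{ML}(A[x]) = A = \ker(D)$.

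The main obstacle I expect is the step $D(A) \subseteq A$: a priori $D$ could mix the variable $x$ into the coefficients, and one must use rigidity of $A$ together with the graded/homogeneous structure from Daigle's result to exclude this. The delicate point is verifying that $\overline{D}$ restricts to a locally nilpotent derivation on the degree-zero slice $A$ of $\mathrm{Gr}_{\partial_x}(A[x])$, so that rigidity can be invoked there; handling the homogeneous components of $D$ carefully is where the real content lies, and everything after that is routine.
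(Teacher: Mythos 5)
Your plan follows the same route as the paper's own proof: take $\partial_{x}=d/dx$ with its filtration $\mathcal{F}_{i}=Ax^{i}\oplus\mathcal{F}_{i-1}$, apply Proposition \ref{Pro:Daigle} to obtain a non-zero homogeneous locally nilpotent derivation $\overline{D}$ on $\mathrm{Gr}_{\partial_{x}}(A[x])\cong A[\overline{x}]$, and use rigidity of the degree-zero part $A$ to force $D$ to lower degree. Your endgame is also correct and complete: once $A\subseteq\ker(D)$, the derivation is $A$-linear, local nilpotency forces $D(x)\in A$, so $D=g\partial_{x}$ with $\ker(D)=A$, and semi-rigidity follows via Proposition \ref{prop:semi-rigid=00003DML=00003Dker}.

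However, the step you yourself flag as delicate is a genuine gap, and as phrased it would not go through: if $d:=\deg_{\mathcal{F}}D\geq0$, the homogeneous derivation $\overline{D}$ does \emph{not} restrict to the degree-zero slice $A$ at all, since it sends degree-$0$ elements into $A\overline{x}^{\,d}$; so rigidity of $A$ cannot be invoked on $\overline{D}$ directly, and ``homogeneity forces $\overline{D}$ to act in a controlled way'' is the conclusion to be proved, not an argument. The paper closes this hole in two moves. First, $\overline{D}(\overline{x})$ is either zero or lies in $A\overline{x}^{\,d+1}$, so $\overline{x}$ divides $\overline{D}(\overline{x})$, and for a locally nilpotent derivation on a domain this forces $\overline{D}(\overline{x})=0$ (Corollary 1.20 of \cite{key-gene}). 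Second, homogeneity then gives a factorization $\overline{D}=\overline{x}^{\,d}E$ where $E(\overline{x})=0$ and $E(A)\subseteq A$, and Principle 7 of \cite{key-gene} guarantees that $E$ is again locally nilpotent; only now does rigidity of $A$ apply, giving $E=0$, hence $\overline{D}=0$, a contradiction. This yields $d=-1$, i.e. $D(A)\subseteq\mathcal{F}_{-1}=0$, which is stronger than your intermediate goal $D(A)\subseteq A$ and makes your subsequent ``restrict $D|_{A}$ and apply rigidity again'' step redundant (though harmless). Without these two facts about locally nilpotent derivations, the passage from rigidity of $A$ to the vanishing of $\overline{D}$ is precisely the content of the proposition, so the proposal as it stands is an outline of the paper's proof with its crucial step left unproven.
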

\begin{proof}
For the convenience of the reader, we provide an argument formulated
in the $\mathrm{LND}$-filtration language. Let $\partial$ be the
locally nilpotent derivation of $A[x]$ defined by $\partial(a)=0$
for every $a\in A$ and $\partial(x)=1$. Then the $\partial$-filtration
$\{\mathcal{F}_{i}\}_{i\in\mathbb{N}}$ is given by $\mathcal{F}_{i}=Ax^{i}\oplus\mathcal{F}_{i-1}$
where $\mathcal{F}_{0}=\ker(\partial)=A$, and the associated graded
algebra is $\mathrm{Gr}(A[x])=\oplus_{i\in\mathbb{N}}A\overline{x}^{i}$,
where $\overline{x}:=\mathrm{gr}(x)$ and $\overline{A}=A$. Since
$A[x]$ is of finite transcendence degree over $k$, Proposition \ref{Pro:Daigle}
implies that every non-zero $D\in\mathrm{LND}(A[x])$ respects the
$\partial$-filtration and induces a non-zero homogeneous locally
nilpotent derivation $\overline{D}$ of $\mathrm{Gr}(A[x])$ of a
certain degree $d=\deg_{\partial}(D)\geq-1$. It is enough to check
that in fact $d=-1$. Indeed, if so then $D=a\partial$, for some
$a\in A$ which implies the semi-rigidity of $A[x]$. So suppose for
contradiction that $d\geq0$, then $D(x)\in\mathcal{F}_{d+1}=Ax^{d+1}+\mathcal{F}_{d}$.
Therefore, $\overline{D}$ sends $\overline{x}$ to zero or to $\overline{a}\,\overline{x}^{d+1}$.
Either way, we have $\overline{x}\in\ker(\overline{D})$, see Corollary
1.20 \cite{key-gene}. Furthermore, $\overline{D}(\overline{a})=\begin{cases}
0\\
\overline{a}_{0}\overline{x}^{d}
\end{cases}$, so $\overline{D}=\overline{x}^{d}E$ where $E(\overline{a})=\begin{cases}
0\\
\overline{a}_{0}
\end{cases}$ and $E(\overline{x})=0$. This asserts that $E\in\mathrm{LND}(A[\overline{x}])$
by virtue of \cite[Principle 7]{key-gene}. Clearly, $E$ restricts
to $\mathrm{LND}(A)$, so by hypothesis $E=0$ which yields $\overline{D}=0$,
a contradiction.
\end{proof}

\subsubsection{\emph{\label{Sec:D.w.esample} }\textbf{\emph{Danielewski $k$-domains}}}

Let 
\[
B_{n,P}=k[X,S,Y]/\langle X^{n}Y-P(X,S)\rangle
\]
where $n\geq1$, $d\geq2$, $P(X,S)=S^{d}+f_{d-1}(X)S^{d-1}+\cdots+f_{0}(X),$
and $f_{i}(X)\in k[X]$. We call $B_{n,P}$ the Danielewski $k$-domain
corresponding to the pair $(n,P)$. Let $x$, $s$, $y$ be the class
of $X$, $S$, and $Y$ in $B_{n,P}$. It is well known (see \cite[Section 4]{key-Makar}
for the case $P\in k[S]$; and \cite[Section 2.4]{Ploni} for the
case, where $P(X,S)\in k[X,S]$) that if $n\geq2$, then $\mathrm{ML}(B_{n,P})=k[x]$
and $\mathrm{LND}(B_{n,P})=k[x].\partial$, for the locally nilpotent
derivation $\partial$ of $B$ defined by 
\[
\partial=x^{n}\partial_{s}+\frac{\partial P}{\partial s}\partial_{y},
\]
where $\frac{\partial P}{\partial s}=ds^{d-1}+(d-1)f_{d-1}(x)s^{d-2}+\ldots+f_{1}(x)$.
\textit{\emph{Hence, $B_{n,P}$ is almost rigid.}}

We easily recover these previous results using the $\mathrm{LND}$-filtration
method as follows:

\textit{\emph{The $\partial$-filtration }}$\{\mathcal{F}_{i}\}_{i\in\mathbb{N}}$
of $B_{n,P}$ is given by: 
\[
\mathcal{F}_{di+j}=k[x]s^{j}y^{i}+\mathcal{F}_{di+j-1}.
\]
where $i\in\mathbb{N}$ and $j\in\{0,\ldots d-1\}$. The associated
graded algebra is $\mathrm{Gr}_{\partial}(B_{n,P})=k[X,S,Y]/\langle X^{n}Y-S^{d}\rangle$,
and $B_{[di+j]}:=\mathcal{F}_{di+j}/\mathcal{F}_{di+j-1}=k[\overline{x}]\overline{s}^{j}\overline{y}^{i}$
where $i\in\mathbb{N}$ and $j\in\{0,\ldots d-1\}$ (see \S \ref{Sec:a more general case}
for more details). Corollary \ref{Coro:semi-rigidity- of-the-general-case}
below provides, in particular, an alternative argument formulated
in the $\mathrm{LND}$-filtration language proving directly that $\{\mathcal{F}_{i}\}_{i\in\mathbb{N}}$
is indeed the unique $\mathrm{LND}$-filtration of $B$.

\subsection{\label{Sec:alg.iso.semi-rigid} Algebraic isomorphisms between semi-rigid
$k$-domains}

\indent\newline\noindent  Let $\Psi:A\longrightarrow B$ be an algebraic
isomorphism between two $k$-domains. Given $\partial\in\mathrm{LND}(B)$,
then for any $n\in\mathbb{N}$ we have $(\Psi^{-1}\partial\Psi)^{n}=\Psi^{-1}\partial^{n}\Psi$.
So we see that $\partial_{\Psi}:=\Psi^{-1}\partial\Psi\in\mathrm{LND}(A)$.
An immediate consequence is that $\Psi(\mathrm{ML}(A))=\mathrm{ML}(B)$.
Furthermore, $\Psi\{\ker(\partial_{\Psi})\}=\ker(\partial)$, and
more generally, $\Psi$ sends elements of degree $n$, relative to
$\partial_{\Psi}$, to elements of the same degree $n$ relative to
$\partial$, that is, $\deg_{\partial_{\Psi}}(a)=\deg_{\partial}(\Psi(a))$
for all $a\in A$. So in particular, $\Psi(\mathrm{AL}(A))=\mathrm{AL}(B)$.
These properties, combined with Definition \ref{Def:semi-rigid},
give the following result.
\begin{prop}
\label{prop:iso-preserve-filtration} Let $\Psi:A\longrightarrow B$
be an isomorphism between two semi-rigid $k$-domains. Let $\{\mathcal{F}_{i}\}_{i\in\mathbb{N}}$\emph{
}$($resp. $\{\mathcal{G}_{i}\}_{i\in\mathbb{N}}$ \emph{$)$} be
the unique $\mathrm{LND}$-filtration\emph{ of $A$ $($resp. $B$}$)$.
Then: $\Psi(\mathcal{F}_{i})=\mathcal{G}_{i}$ for every $i$.
\end{prop}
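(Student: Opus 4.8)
The plan is to reduce everything to a single pair of mutually corresponding locally nilpotent derivations and then invoke the degree identity already recorded in the preamble. First I would fix an arbitrary non-zero $\partial\in\mathrm{LND}(B)$ and form $\partial_{\Psi}:=\Psi^{-1}\partial\Psi$. Since $\Psi$ is an isomorphism, the relation $(\Psi^{-1}\partial\Psi)^{n}=\Psi^{-1}\partial^{n}\Psi$ shows that $\partial_{\Psi}\in\mathrm{LND}(A)$, and $\partial_{\Psi}\neq0$ because $\partial\neq0$ and conjugation by the bijection $\Psi$ cannot annihilate a non-zero operator. This is the key translation between the two sides.

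Next I would use semi-rigidity to pin the two filtrations down concretely. Because $A$ is semi-rigid (Definition \ref{Def:semi-rigid}), every non-zero element of $\mathrm{LND}(A)$ induces the unique $\mathrm{LND}$-filtration $\{\mathcal{F}_{i}\}_{i\in\mathbb{N}}$; in particular the $\partial_{\Psi}$-filtration coincides with it, so $\mathcal{F}_{i}=\ker\partial_{\Psi}^{i+1}=\{a\in A:\deg_{\partial_{\Psi}}(a)\leq i\}$. Symmetrically, semi-rigidity of $B$ gives $\mathcal{G}_{i}=\ker\partial^{i+1}=\{b\in B:\deg_{\partial}(b)\leq i\}$. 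This step is precisely where the hypothesis that \emph{both} domains are semi-rigid (rather than merely admitting some locally nilpotent derivation) enters, as it lets me replace the abstract unique filtration by the filtration attached to the specific derivation $\partial_{\Psi}$ (resp. $\partial$).

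Then I would apply the degree identity $\deg_{\partial_{\Psi}}(a)=\deg_{\partial}(\Psi(a))$ for all $a\in A$, which follows at once from $\Psi\partial_{\Psi}^{\,n}=\partial^{n}\Psi$ together with the injectivity of $\Psi$: an element is killed by $\partial_{\Psi}^{i+1}$ exactly when its image is killed by $\partial^{i+1}$. With this in hand the conclusion is a one-line set-theoretic manipulation, $\Psi(\mathcal{F}_{i})=\{\Psi(a):\deg_{\partial_{\Psi}}(a)\leq i\}=\{\Psi(a):\deg_{\partial}(\Psi(a))\leq i\}=\{b\in B:\deg_{\partial}(b)\leq i\}=\mathcal{G}_{i}$, where the penultimate equality uses surjectivity of $\Psi$ so that $b=\Psi(a)$ exhausts $B$.

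I do not anticipate a genuine obstacle, since the substance is already packaged in the properties listed just before the statement; the only points deserving a word of care are verifying that $\partial_{\Psi}\neq0$ and being explicit that semi-rigidity is what permits the \emph{unique} $\mathrm{LND}$-filtration to be identified with the concrete $\partial_{\Psi}$-filtration (resp. $\partial$-filtration). If one preferred to avoid singling out a derivation, an alternative would be to observe that $\Psi$ intertwines the Makar-Limanov invariants and, more generally, the whole $\mathrm{AL}_{i}$-hierarchy, but the conjugation argument above is the most direct route.
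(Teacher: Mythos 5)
Your proposal is correct and follows essentially the same route as the paper: the paper's own justification consists precisely of the conjugation observation $\partial_{\Psi}:=\Psi^{-1}\partial\Psi\in\mathrm{LND}(A)$ and the degree identity $\deg_{\partial_{\Psi}}(a)=\deg_{\partial}(\Psi(a))$, recorded in the paragraph immediately preceding the statement, combined with Definition \ref{Def:semi-rigid} to identify the $\partial_{\Psi}$-filtration (resp. $\partial$-filtration) with the unique $\mathrm{LND}$-filtration of $A$ (resp. $B$). Your write-up only makes explicit the final set-theoretic step and the non-vanishing of $\partial_{\Psi}$, both of which the paper leaves implicit.
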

In the case where $A=B$, we obtain an action of the group $\mathrm{Aut_{k}}(B)$
of algebraic $k$-automorphisms of $B$ by conjugation on $\mathrm{LND}(B)$.
As a consequence of Proposition \ref{prop:iso-preserve-filtration},
every $k$-automorphism of a semi-rigid $k$-domain $B$ preserves
its unique\emph{ }$\mathrm{LND}$-filtration $\{\mathcal{F}_{i}\}_{i\in\mathbb{N}}$.
Letting $\mathrm{Aut_{k}}\left(B,\mathrm{ML}(B)\right)$ be the sub-group
of $\mathrm{Aut_{k}}(B)$ consisting of elements whose induced action
on $\mathrm{ML}(B)$ is trivial, we have the following Corollary which
describes the structure of $\mathrm{Aut_{k}}(B)$. 
\begin{cor}
For every semi-rigid $k$-domain $B$, there exists an exact sequence
\[
0\rightarrow\mathrm{Aut_{k}}\left(B,\mathrm{ML}(B)\right)\rightarrow\mathrm{Aut_{k}}(B)\rightarrow\mathrm{Aut_{k}}\left(\mathrm{ML}(B)\right).
\]
Furthermore, every element of $\mathrm{Aut_{k}}\left(B,\mathrm{ML}(B)\right)$
induces for every $i\geq1$ an automorphism of $\mathcal{F}_{0}$-module
of each $\mathcal{F}_{i}$.
\end{cor}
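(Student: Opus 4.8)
The plan is to construct the exact sequence from the conjugation action already analyzed in §\ref{Sec:alg.iso.semi-rigid}, then verify the module-automorphism claim using Proposition \ref{prop:iso-preserve-filtration}. The map $\mathrm{Aut}_k(B)\to\mathrm{Aut}_k(\mathrm{ML}(B))$ is the restriction homomorphism: for $\alpha\in\mathrm{Aut}_k(B)$, we have $\alpha(\mathrm{ML}(B))=\mathrm{ML}(B)$ (noted in §\ref{Sec:alg.iso.semi-rigid} as $\Psi(\mathrm{ML}(A))=\mathrm{ML}(B)$ with $A=B$), so $\alpha$ restricts to a $k$-automorphism $\alpha|_{\mathrm{ML}(B)}$ of $\mathrm{ML}(B)$. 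First I would check this restriction is a group homomorphism, which is immediate since restriction of a composite is the composite of restrictions. By definition, its kernel is precisely $\mathrm{Aut}_k(B,\mathrm{ML}(B))$, the automorphisms acting trivially on $\mathrm{ML}(B)$. This yields exactness at both $\mathrm{Aut}_k(B,\mathrm{ML}(B))$ (the inclusion is injective) and at $\mathrm{Aut}_k(B)$ (kernel equals image of the inclusion), giving the displayed four-term sequence. Note the sequence is deliberately \emph{not} claimed exact at $\mathrm{Aut}_k(\mathrm{ML}(B))$, so no surjectivity need be shown; this is the standard ``kernel of restriction'' exact sequence.

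For the second assertion, let $\alpha\in\mathrm{Aut}_k(B,\mathrm{ML}(B))$ and fix $i\ge 1$. Since $B$ is semi-rigid, $\mathcal{F}_0=\mathrm{ML}(B)$ by Proposition \ref{prop:semi-rigid=00003DML=00003Dker}, so $\alpha$ acts trivially on $\mathcal{F}_0$. Applying Proposition \ref{prop:iso-preserve-filtration} with $A=B$ (and hence $\{\mathcal{F}_i\}=\{\mathcal{G}_i\}$), we get $\alpha(\mathcal{F}_i)=\mathcal{F}_i$ for every $i$, so $\alpha$ restricts to a $k$-linear automorphism of each $\mathcal{F}_i$. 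It remains to see this restriction is $\mathcal{F}_0$-linear: for $c\in\mathcal{F}_0=\mathrm{ML}(B)$ and $f\in\mathcal{F}_i$, we have $\alpha(cf)=\alpha(c)\alpha(f)=c\,\alpha(f)$, using multiplicativity of $\alpha$ and the triviality of $\alpha$ on $\mathcal{F}_0$. Thus $\alpha|_{\mathcal{F}_i}$ is an automorphism of $\mathcal{F}_i$ as an $\mathcal{F}_0$-module.

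\textbf{The main subtlety} is making sure the invariance $\alpha(\mathcal{F}_i)=\mathcal{F}_i$ is genuinely available rather than merely $\alpha(\mathcal{F}_i)\subseteq\mathcal{F}_i$. This is exactly what Proposition \ref{prop:iso-preserve-filtration} delivers as an equality of filtration pieces, since $\alpha$ preserves $\deg_\partial$ (elements of degree $n$ go to elements of degree $n$), so no separate argument is needed; applying the same reasoning to $\alpha^{-1}$ confirms bijectivity. Everything else is formal bookkeeping: the semi-rigidity hypothesis feeds in only through the identification $\mathcal{F}_0=\mathrm{ML}(B)$ and the uniqueness of the $\mathrm{LND}$-filtration, both already established above, so the proof is short and essentially a transcription of properties recorded in the preceding subsection.
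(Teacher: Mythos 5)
Your proposal is correct and follows exactly the route the paper intends: the corollary is stated as a direct consequence of the discussion in \S\ref{Sec:alg.iso.semi-rigid} (the restriction map $\alpha\mapsto\alpha|_{\mathrm{ML}(B)}$ whose kernel is $\mathrm{Aut_{k}}\left(B,\mathrm{ML}(B)\right)$ by definition) together with Proposition \ref{prop:iso-preserve-filtration} applied with $A=B$, and your use of Proposition \ref{prop:semi-rigid=00003DML=00003Dker} to identify $\mathcal{F}_{0}=\mathrm{ML}(B)$ plus multiplicativity of $\alpha$ for the $\mathcal{F}_{0}$-linearity is precisely the bookkeeping the paper leaves implicit.
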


\section{\textbf{A new class of semi-rigid }$k$-\textbf{domains}}

In this section, we introduce a new family of domains $R_{n,e,P,Q}$
of the form 
\[
R_{n,e,P,Q}:=k[X,Y,Z]/\left\langle X^{n}Y-P\left(X,Q(X,Y)-X^{e}Z\right)\right\rangle 
\]
 where $e\geq0$, $n\geq1$, $(n,e)\neq(1,0)$, $d,m\geq2$, 
\[
P(X,S)=S^{d}+f_{d-1}(X)S^{d-1}+\cdots+f_{1}(X)S+f_{0}(X)\,\,,\,\,\,\and
\]

\[
Q(X,Y)=Y^{m}+g_{m-1}(X)Y^{m-1}+\cdots+g_{1}(X)Y+g_{0}(X).
\]

The trivial case ($e=0$), corresponds to the Danielewski $k$-domains
$B_{n,P}=k[X,S,Y]/\left\langle X^{n}Y-P(X,S)\right\rangle $. Indeed,
the ring $R_{n,0,P,Q}=k[X,Y,Z]/\left\langle X^{n}Y-P\left(X,Q(X,Y)-Z\right)\right\rangle $
is isomorphic to $B_{n,P}$ via an isomorphism induced by $\Phi:k[X,S,Y]\longrightarrow k[X,Y,Z]$,
where $\Phi(X)=X$, $\Phi(S)=-Z+Q(X,Y)$, and $\Phi(Y)=Y$. It is
clear that $\Phi^{*}=\pi_{X^{n}Y-P\left(X,Q(X,Y)-Z\right)}\circ\Phi$
is surjective, where $\pi_{X^{n}Y-P\left(X,Q(X,Y)-Z\right)}:k[X,Y,Z]\rightarrow R_{n,e,P,Q}$
is the natural projection. Thus $R_{n,0,P,Q}=\mathrm{Im}\Phi^{*}\simeq k[X,S,Y]/\ker\Phi^{*}$.
This yields, in particular, that the ideal $\ker\Phi^{*}\subset k^{[3]}$
is principal. But since $\Phi^{*}(X^{n}Y-P(X,S))=0$, $\langle F\rangle\subset\ker\Phi^{*}$,
and $X^{n}Y-P(X,S)$ is irreducible, we conclude that $\langle X^{n}Y-P(X,S)\rangle=\ker\Phi^{*}$.
Therefore, $\Phi^{*}$ induces an isomorphism between the two rings.
\begin{rem}
Computing the $\mathrm{ML}$-invariant for these examples using known
techniques up to date is rather a hopeless task. Indeed, for the non-trivial
case of $R_{n,e,P,Q}$ where $e\neq0$, a real-valued weight degree
function $\omega$ on $k^{[3]}$ has to be of the form $\omega=(\frac{md-1}{nm-md+1}\lambda,\frac{n}{nm-md+1}\lambda,\lambda)$,
where $\lambda\in\mathbb{R}$, to induce a degree function $\omega_{0}$
on $R_{n,e,P,Q}$. Hence, the associated graded algebra, corresponding
to $\omega_{0}$-filtration, takes the form $\mathrm{Gr}_{\omega}(R_{n,e,P,Q})=k[X,Y,Z]/\langle X^{n}Y-(Y^{m}-X^{e}Z)^{d}\rangle$.
The latter ring is again another member of the new family that corresponds
to $R_{n,e,S^{d},Y^{m}}=\mathrm{Gr}_{\omega}(R_{n,e,P,Q})$. So any
hope of simplifying the study of locally nilpotent derivation of $R_{n,e,P,Q}$,
by studying the homogenous locally nilpotent derivation on the associated
graded algebra $R_{n,e,S^{d},Y^{m}}$, collapses. On the other hand,
the remaining choices of $\omega$ in $\mathbb{R}^{[3]}$ induces
a semi-degree function on $R_{n,e,P,Q}$ with the associated graded
algebra $\mathrm{Gr}_{\omega}(R_{n,e,P,Q})=k[X,Y,Z]/\langle(Y^{m}-X^{e}Z)^{d}\rangle$.
This is not an integral domain, which complicates the situation even
more. 

Nevertheless, the $\mathrm{LND}$-filtration method allows us to pass
through these complications as will be shown in the rest of this paper.
Indeed, consider $\omega\in\mathbb{N}^{[4]}$ the $\mathbb{N}$-weight
degree function on $k^{[4]}$ defined by $\omega(X,S,Y,Z)=(0,1,d,md)$,
then it induces $\omega_{R_{n,e,P,Q}}$ a degree function on 
\[
R_{n,e,P,Q}\simeq k[X,S,Y,Z]/\left\langle X^{n}Y-P\left(X,S\right),Q(X,Y)-X^{e}Z-S\right\rangle .
\]
It turns out that the degree function $\omega_{R_{n,e,P,Q}}$ coincides
with $\deg_{\partial}$ for any non-zero $\partial\in\mathrm{LND}(R_{n,e,P,Q})$.\\

\end{rem}

\subsection{Properties of the new class}

\indent\newline\noindent  Here, we point out some properties of $R_{n,e,P,Q}$
that we will establish in the rest of this section:

\subsubsection{\textbf{\emph{\large Algebraic construction:}}\emph{ }}

Consider the following $k$-domain 
\[
B_{n,P}=k[X,S,Y]/\langle X^{n}Y-P(X,S)\rangle,
\]
which is the Danielewski $k$-domain corresponding to the pair $(n,P)$.
Let us extend this ring by taking the sub-algebra of $k[X^{\pm1},S]$
generated by $B_{n,P}\subset k[X^{\pm1},S]$ and $z\in k[X^{\pm1},S]$,
where $z$ is an algebraic element over $k[X,S]$ that has a dependence
relation of the form 
\[
X^{nm+e}Z-[P(X,S)]^{m}-Xg_{m-1}(X)[P(X,S)]^{m-1}-\cdots-X^{m-1}g_{1}(X)P(X,S)-X^{m}g_{0}(X)+X^{nm}S.
\]

By sending $S$ to $Q(X,Y)-X^{e}Z$ we immediately see that: 
\[
B_{n,P}\subset R_{n,e,P,Q}\,\,,\,\,\and\,\,\,\, B_{nm+e,F}\subset R_{n,e,P,Q}\,\,,
\]
where $B_{nm+e,F}$ is the Danielewski $k$-domain corresponding to
the pair $(nm+e,F)$:
\[
B_{nm+e,F}=k[X,S,Z]/\langle X^{nm+e}Z-F(X,S)\rangle,
\]
and $F(X,S)=[P(X,S)]^{m}+Xg_{m-1}(X)[P(X,S)]^{m-1}+\cdots+X^{m-1}g_{1}(X)P(X,S)+X^{m}g_{0}(X)$. 

Clearly, we have $B_{n,P}.B_{nm+e,F}=R_{n,e,P,Q}$, which simply means
that $R_{n,e,P,Q}$ can be realized as the sub-algebra of $k[X^{\pm1},S]$
generated by both $B_{n,P}$ and $B_{nm+e,F}$. 

These new rings $R_{n,e,P,Q}$, for $e\neq0$, are not isomorphic
to any of Danielewski rings, see Proposition \ref{prop:not iso trivial case}.
Nevertheless, they share with them the property to come naturally
equipped with an irreducible locally nilpotent derivation. But in
contrast with the Danielewski rings, the corresponding derivation
on $k[X,Y,Z]$ are no longer triangular, in fact not even triangulable
by virtue of the characterization due to Daigle \cite{Daigle: traingulability}.
For instance: let $D$ be the locally nilpotent (triangular) derivation
of $k[X,S,Y,Z]$ defined by:

\begin{center}
$\partial(X)=0$, $\partial(S)=X^{n+e}$, $\partial(Y)=X^{e}\frac{\partial P}{\partial S}$,
and $\partial(Z)=\frac{\partial Q}{\partial Y}\,\frac{\partial P}{\partial S}-X^{n}$ 
\par\end{center}

\noindent  where $\frac{\partial P}{\partial S}=dS^{d-1}+(d-1)f_{d-1}(X)S^{d-2}+\cdots+f_{1}(X)$,
and $\frac{\partial Q}{\partial Y}=mY^{m-1}+(m-1)g_{m-1}(X)Y^{m-2}+\cdots+g_{1}(X)$.
Then $\partial$ induces a non-zero irreducible locally nilpotent
derivation of $R_{n,e,P,Q}$. Let $x,s,y,z$ be the class of $X,\, S:=Q(X,Y)-X^{e}Z,\, Y$,
and $Z$ in $R_{n,e,P,Q}$ , then: 
\[
\partial=x^{e}\frac{\partial P}{\partial s}\partial_{y}+(\frac{\partial Q}{\partial y}\,\frac{\partial P}{\partial s}-x^{n})\partial_{z}\in\mathrm{LND}(R_{n,e,P,Q})
\]

Furthermore, $\mathrm{ML}(R_{n,e,P,Q})=k[x]$ whenever $(n,e)\neq(1,0)$,
see Corollary \ref{Coro:semi-rigidity- of-the-general-case}. This
implies that $R_{n,e,P,Q}$ is semi-rigid, even almost rigid by virtue
of Proposition \ref{Pro:semi-rigid one dimensional}. Hence $\mathrm{\mathrm{AL}}(R_{n,e,P,Q})=k[X,S]$.
In addition, every non-zero locally nilpotent derivation of $R_{n,e,P,Q}$
restricts to a non-zero locally nilpotent derivation on $B_{n,P}$.
And most importantly, every $k$-automorphism of $R_{n,e,P,Q}$ restricts
to an automorphism of $B_{n,P}$. Also, it restricts to an $k$-automorphism
of $\mathrm{\mathrm{AL}}(R_{n,e,P,Q})$ (resp. $\mathrm{ML}(R_{n,e,P,Q})$).
So in particular, every $k$-automorphism of $B_{n,P}$ ($\simeq R_{n,0,P,Q}$)
restricts to a $k$-automorphism of $\mathrm{\mathrm{AL}}(R_{n,e,P})$
(resp. $\mathrm{ML}(R_{n,e,P,Q})$). But of course this is not the
full picture, see \ref{Sec:A-huge}.

\subsubsection{\textbf{\emph{\large Affine modification of the $\mathrm{\mathrm{AL}}$-invariant:}}\emph{ }}

Here, we present another point of view about the construction of the
new class.

\noindent  The affine modification of the $\mathrm{\mathrm{AL}}$-invariant
$\mathrm{\mathrm{AL}}(R_{n,e,P,Q})=k[X,S]$ along $X^{nm+e}$ with
center 
\[
I_{1}=\langle X^{nm+e},X^{n(m-1)+e}P(X,S),F(X,S)\rangle,
\]
see \cite[Definition 1.1]{Sh. Kaliman and M. Zaidenberg}, coincides
by virtue of \cite[Proposition 1.1]{Sh. Kaliman and M. Zaidenberg}
with 
\[
k[X,S]\left[I_{1}/X^{nm+e}\right]=k[X,S][P(X,S)/X^{n},F(X,S)/X^{nm+e}]=k[X,S,y,z]\simeq R_{n,e,P,Q}.
\]
Also, the affine modification of the $\mathrm{\mathrm{AL}}$-invariant
$\mathrm{\mathrm{AL}}(R_{n,e,P,Q})=k[X,S]$ along $X^{n}$ with center
$I_{2}=\langle X^{n},P(X,S)\rangle$ coincides with 
\[
k[X,S]\left[I_{2}/X^{n}\right]=k[X,S][P(X,S)/X^{n}]=k[X,S,y]\simeq B_{n,P}.
\]
 Finally, the affine modification of $B_{n,P}$ along $X^{e}$ with
center $I_{3}=\langle X^{e},Q(X,Y)-S\rangle$ coincides with 
\[
B_{n,P}\left[I_{3}/X^{e}\right]=B_{n,P}[\left(Q(X,Y)-S\right)/X^{e}])=B_{n,P}[z]\simeq R_{n,e,P,Q}.
\]

We put together previous observations in the following Proposition.
\begin{prop}
\label{prop:affine-mod} with the above notation we have:

\emph{$($1$)$} $R_{n,e,P,Q}$ is the affine modification of the
\textup{$\mathrm{\mathrm{AL}}$}-invariant along $X^{nm+e}$ with
center $I_{1}$.

\emph{$($2$)$} $B_{n,P}$ is the affine modification of the \textup{$\mathrm{\mathrm{AL}}$}-invariant
along $X^{n}$ with center $I_{2}$.

\emph{$($3$)$} $R_{n,e,P,Q}$ is the affine modification of $B_{n,P}$
along $X^{e}$ with center $I_{3}$.
\end{prop}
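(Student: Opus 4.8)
The plan is to derive all three statements from the algebraic description of affine modifications recorded in \cite[Proposition 1.1]{Sh. Kaliman and M. Zaidenberg}: for a domain $A$, an element $f\in A$, and an ideal $I=\langle f,g_{1},\ldots,g_{r}\rangle$ containing $f$, the affine modification of $A$ along $f$ with center $I$ is the subalgebra $A[I/f]=A[g_{1}/f,\ldots,g_{r}/f]$ of the localization $A_{f}$. Since every ring appearing here is a domain sitting inside $k[X^{\pm1},S]$, the relevant localizations are honest subrings of $k[X^{\pm1},S]$, so the only substantive task is to match the subalgebra produced by the modification with the abstract presentation of the target ring. I would use throughout the facts established above that $\mathrm{AL}(R_{n,e,P,Q})=k[X,S]$ and that $R_{n,e,P,Q}=B_{n,P}\cdot B_{nm+e,F}$ as subalgebras of $k[X^{\pm1},S]$.

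For $(2)$, which I would present first as a template, I take $A=k[X,S]$, $f=X^{n}$, and $I_{2}=\langle X^{n},P(X,S)\rangle$; the cited proposition yields $A[I_{2}/X^{n}]=k[X,S][P(X,S)/X^{n}]$. Setting $y:=P(X,S)/X^{n}$ produces exactly the relation $X^{n}y=P(X,S)$, and since $X^{n}Y-P(X,S)$ is irreducible the resulting subalgebra is isomorphic to $B_{n,P}$.

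For $(1)$ I apply the same proposition with $f=X^{nm+e}$ and $I_{1}=\langle X^{nm+e},X^{n(m-1)+e}P(X,S),F(X,S)\rangle$, using the simplification $X^{n(m-1)+e}P/X^{nm+e}=P/X^{n}$ to obtain $A[I_{1}/X^{nm+e}]=k[X,S][P/X^{n},F/X^{nm+e}]$. Writing $y:=P/X^{n}$ and $z:=F/X^{nm+e}$ identifies $k[X,S][y]$ with $B_{n,P}$ and $k[X,S][z]$ with $B_{nm+e,F}$, so the modification is the compositum $B_{n,P}\cdot B_{nm+e,F}$, which equals $R_{n,e,P,Q}$ by the algebraic construction established above. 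For $(3)$ I instead take $A=B_{n,P}$, $f=X^{e}$, and $I_{3}=\langle X^{e},Q(X,Y)-S\rangle$, so that $A[I_{3}/X^{e}]=B_{n,P}[(Q(X,Y)-S)/X^{e}]$; putting $z:=(Q(X,Y)-S)/X^{e}$ gives $S=Q(X,Y)-X^{e}z$, and substituting this into the defining relation $X^{n}Y=P(X,S)$ recovers the presentation $k[X,Y,Z]/\langle X^{n}Y-P(X,Q(X,Y)-X^{e}Z)\rangle=R_{n,e,P,Q}$.

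The main obstacle is not the generator bookkeeping but confirming that each subalgebra of the localization carries precisely the claimed presentation with no hidden relations, i.e. that the natural surjection from the ambient polynomial ring onto the modification has kernel generated by the single displayed polynomial. This reduces to irreducibility of the defining polynomials $X^{n}Y-P$, $X^{nm+e}Z-F$, and $X^{n}Y-P(X,Q-X^{e}Z)$, which guarantees that the principal ideal they generate is prime; this holds because each modification embeds in the domain $k[X^{\pm1},S]$, and I would verify the irreducibility directly, treating the degenerate boundary cases of small $n,e$ (in particular $e=0$, where $f$ becomes a unit and the center becomes the whole ring) with care.
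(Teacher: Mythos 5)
Your proposal is correct and follows essentially the same route as the paper: the paper likewise invokes \cite[Proposition 1.1]{Sh. Kaliman and M. Zaidenberg} to write each modification as $A[I/f]$ inside the localization, and then identifies the resulting subalgebras of $k[X^{\pm1},S]$ with $B_{n,P}$, $B_{nm+e,F}$, and their compositum $B_{n,P}\cdot B_{nm+e,F}=R_{n,e,P,Q}$ exactly as you do. Your added attention to the presentations having no hidden relations (via irreducibility of the defining polynomials) only makes explicit what the paper handles implicitly in its earlier identification of $R_{n,0,P,Q}$ with $B_{n,P}$.
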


\subsubsection{\textbf{\emph{\large \label{Sec:A-huge} Invariant sub-algebras of
$R_{n,e,P,Q}$: }}}

\noindent  For simplicity let $Q(X,Y)=Y^{m}$. Denote $R_{n,e,P}:=R_{n,e,P,Y^{m}}$
and $B_{n,P}\simeq R_{n,0,P}$. Consider the two chains of inclusions:

The first chain of inclusions, which is realized by sending $S$ to
$Y^{m}-XZ$ for the first inclusion and by sending $Z$ to $XZ$ for
the rest steps 
\[
B_{n,P}\hookrightarrow R_{n,1,P}\hookrightarrow\cdots\hookrightarrow R_{n,e,P}.
\]

The second chain of inclusions, which is realized by sending $Y$
to $XY$ for every step 
\[
B_{1,P}\hookrightarrow B_{2,P}\hookrightarrow\cdots\hookrightarrow B_{n,P}.
\]

Together they produce the following chain of inclusions 
\[
B_{1,P}\hookrightarrow B_{2,P}\hookrightarrow\cdots\hookrightarrow B_{n,P}\hookrightarrow R_{n,1,P}\hookrightarrow\cdots\hookrightarrow R_{n,e,P}
\]
with the following properties.
\begin{thm}
With the above notation the following holds:

$($\emph{a$)$} Every non-zero $\partial\in\mathrm{LND}(R_{n,e,P})$
restricts to a non-zero $\mathrm{LND}(R_{n,e_{0},P})$ for any $e_{0}\in\{1,\ldots,e\}$.
Also, it restricts to a non-zero $\mathrm{LND}(B_{n_{0},P})$ for
any $n_{0}\in\{0,\ldots,n\}$. 

\emph{$($b$)$} Every algebraic $k$-automorphism of $R_{n,e,P}$
restricts to an algebraic $k$-automorphism of $R_{n,e_{0},P}$ for
any $e_{0}\in\{1,\ldots,e\}$. Also, it restricts to a $k$-automorphism
of $B_{n_{0},P}$ for any $n_{0}\in\{1,\ldots,n\}$.

\emph{$($c$)$} $R_{n_{1},e_{1},P}$ $\simeq$ $R_{n_{2},e_{2},P}$
if an only if $n_{1}=n_{2}$ and $e_{1}=e_{2}$. Hence these $k$-domains
are not algebraically isomorphic to each other (pairwise). 

\emph{$($d$)$} Every element of the set 
\[
\left\{ \mathrm{\mathrm{ML}}(R_{n,e,P})=k[X],\,\mathrm{\mathrm{AL}}(R_{n,e,P})=k[X,S],\, B_{n_{0},P}\,,\, R_{n,e_{0},P};\,\, n_{0}\in\{1,\ldots,n\},e_{0}\in\{1,\ldots,e\}\right\} 
\]
represents an invariant sub-algebra of $R_{n,e,P}$.

\emph{$($e$)$} \textup{$\mathrm{\mathrm{AL}}_{0}(R_{n,e,P})=\mathcal{D}(B)=\mathrm{\mathrm{ML}}(R_{n,e,P})\hookrightarrow\mathrm{\mathrm{AL}}(R_{n,e,P})=\mathrm{\mathrm{AL}}_{1}(R_{n,e,P})=\cdots=\mathrm{\mathrm{AL}}_{d-1}(R_{n,e,P})\hookrightarrow B_{n,P}=\mathrm{\mathrm{AL}}_{d}(R_{n,e,P})=\cdots=\mathrm{\mathrm{AL}}_{md-1}(R_{n,e,P})\hookrightarrow\mathrm{\mathrm{AL}}_{md}(R_{n,e,P})=R_{n,e,P}$.}\end{thm}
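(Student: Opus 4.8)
The plan is to run the whole argument through the explicit model of $R_{n,e,P}$ inside $k[X^{\pm1},S]$. Writing $x=X$, $s=S$, $y=P(X,S)/X^{n}$ and $z=(P(X,S)^{m}-X^{nm}S)/X^{nm+e}$, one checks that the canonical derivation of the statement is the restriction of $X^{n+e}\partial_{S}$, and that the members of the two chains are the visible subalgebras $\mathrm{ML}=k[X]$, $\mathrm{AL}=k[X,S]$, $B_{n_{0},P}=k[X,S,P/X^{n_{0}}]$ and $R_{n,e_{0},P}=k[X,S,y,(P^{m}-X^{nm}S)/X^{nm+e_{0}}]$. The single technical input I would establish first, via the twisting technique (Proposition \ref{Pro:when-it-is-proper} and Proposition \ref{prop:graded algebra}, as already used in Corollary \ref{Coro:semi-rigidity- of-the-general-case}), is the identification
\[
\mathrm{Gr}_{\partial}(R_{n,e,P})\cong k[X,S,Y,Z]/\langle X^{n}Y-S^{d},\,Y^{m}-X^{e}Z\rangle
\]
obtained by taking leading forms of $X^{n}Y-P(X,S)$ and $Y^{m}-X^{e}Z-S$ for the weight $(0,1,d,md)$, together with the module description of its relevant graded pieces $\mathrm{Gr}_{0}=k[X]$, $\mathrm{Gr}_{1}=k[X]\overline{s}$, $\mathrm{Gr}_{d}=k[X]\overline{y}$, $\mathrm{Gr}_{md}=k[X]\overline{z}$, each free of rank one over $k[X]$ (finitely generated, torsion-free, of generic rank one by localising at $X$) and linked by $\overline{s}^{\,d}=X^{n}\overline{y}$ and $\overline{y}^{\,m}=X^{e}\overline{z}$. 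This is the step I expect to be the real obstacle, since parts (b)--(e) all read numerical and structural data off these graded pieces.

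For (a) and (e) the work is then direct. Since $R_{n,e,P}$ is Noetherian and $\mathrm{ML}=k[x]$ is a PID, hence an HCF-ring, Proposition \ref{Pro:semi-rigid one dimensional} makes $R_{n,e,P}$ almost rigid, so every non-zero $\partial'\in\mathrm{LND}(R_{n,e,P})$ equals $f(x)\partial$ with $f\in k[x]$; it suffices to see that $\partial=X^{n+e}\partial_{S}$ stabilises each subalgebra, via $\partial(S)=X^{n+e}$, $\partial(P/X^{n_{0}})=X^{n+e-n_{0}}\,\partial P/\partial S\in B_{n_{0},P}$ and $\partial((P^{m}-X^{nm}S)/X^{nm+e_{0}})=mX^{e-e_{0}}y^{m-1}\partial P/\partial S-X^{n+e-e_{0}}\in R_{n,e_{0},P}$, all using $n_{0}\le n$, $e_{0}\le e$; non-triviality of each restriction follows from $\partial(s)=X^{n+e}\neq0$. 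For (e), semi-rigidity makes all non-zero locally nilpotent derivations share the filtration $\{\mathcal{F}_{i}\}$, so $\mathrm{AL}_{i}(R_{n,e,P})$ is just the algebra generated by $\mathcal{F}_{i}$; reading off the explicit filtration (with $k[x]$-basis the monomials $s^{b}y^{c}z^{c'}$ of weight $b+dc+mdc'\le i$, see \S\ref{Sec:a more general case}) gives $\mathrm{ML}$ for $i=0$, $\mathrm{AL}$ for $1\le i\le d-1$ (only powers of $s$ are added), $B_{n,P}$ for $d\le i\le md-1$ (the slice $y$ enters, nothing more), and $R_{n,e,P}$ at $i=md$ (when $z$ enters), which is exactly the displayed chain.

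For (c), an isomorphism $\Psi\colon R_{n_{1},e_{1},P}\to R_{n_{2},e_{2},P}$ preserves the unique $\mathrm{LND}$-filtration (Proposition \ref{prop:iso-preserve-filtration}) and hence induces a graded isomorphism $\overline{\Psi}$ of the two graded algebras. Since $\overline{\Psi}$ preserves the grading and $\mathrm{Gr}_{0}=k[X]$, it sends $X_{1}\mapsto aX_{2}+b$ and each free generator to a scalar multiple of its counterpart, $\overline{s}_{1}\mapsto\gamma\overline{s}_{2}$, $\overline{y}_{1}\mapsto\delta\overline{y}_{2}$, $\overline{z}_{1}\mapsto\epsilon\overline{z}_{2}$ with $\gamma,\delta,\epsilon\in k^{*}$. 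Applying $\overline{\Psi}$ to $\overline{s}_{1}^{\,d}=X_{1}^{n_{1}}\overline{y}_{1}$ and comparing with $\overline{s}_{2}^{\,d}=X_{2}^{n_{2}}\overline{y}_{2}$ yields the identity $\gamma^{d}X_{2}^{n_{2}}=\delta(aX_{2}+b)^{n_{1}}$ in $k[X_{2}]$, forcing $n_{1}=n_{2}$ and $b=0$; then applying it to $\overline{y}_{1}^{\,m}=X_{1}^{e_{1}}\overline{z}_{1}$ gives $\delta^{m}X_{2}^{e_{2}}=\epsilon a^{e_{1}}X_{2}^{e_{1}}$, whence $e_{1}=e_{2}$. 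The converse is trivial, as equal parameters give the same ring.

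Finally (b) and (d): I would prove (d) and deduce (b), since an invariant subalgebra is in particular stable under every automorphism. Running the argument of (c) for an automorphism $\alpha$ of $R_{n,e,P}$ (now $n_{1}=n_{2}=n$, $e_{1}=e_{2}=e$) gives $b=0$, i.e. $\alpha(x)=ax$, and shows $\overline{\alpha}$ acts by the scalars $\delta$ on $\overline{y}$ and $\epsilon$ on $\overline{z}$. Pulling this back through $\mathrm{gr}$ and Lemma \ref{lem:graded relation}, $\alpha(y)=\delta y+q$ with $q\in\mathcal{F}_{d-1}\subset k[x,s]$, and $\alpha(z)=\epsilon z+b_{0}$ with $b_{0}\in\mathcal{F}_{md-1}\subset B_{n,P}$. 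Then for $B_{n_{0},P}=k[x,s,x^{n-n_{0}}y]$ one computes $\alpha(x^{n-n_{0}}y)=a^{n-n_{0}}\delta\,x^{n-n_{0}}y+a^{n-n_{0}}x^{n-n_{0}}q\in B_{n_{0},P}$, and likewise $\alpha(x^{e-e_{0}}z)=a^{e-e_{0}}\epsilon\,x^{e-e_{0}}z+a^{e-e_{0}}x^{e-e_{0}}b_{0}\in R_{n,e_{0},P}$; since $\alpha$ also stabilises $k[x,s]=\mathrm{AL}$ and $B_{n,P}=\mathrm{AL}_{d}$, it maps each subalgebra into itself, and the same applied to $\alpha^{-1}$ yields equality. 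Combined with the already invariant $\mathrm{ML}$, $\mathrm{AL}$, $B_{n,P}=\mathrm{AL}_{d}$ and $R_{n,e,P}=\mathrm{AL}_{md}$ from (e), this establishes (d), and hence (b).
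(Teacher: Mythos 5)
Your proposal is correct, and for parts (b), (c), (d) it takes a genuinely different route from the paper. The paper's own proof is a chain of forward references: (a) to Corollary \ref{Coro:semi-rigidity- of-the-general-case}, (b) to the explicit automorphism theorem (Theorem \ref{Theo:auto.new example }), (c) to Propositions \ref{prop:iso non-tivial case} and \ref{prop:not iso trivial case}, and (e) to Theorem \ref{Thm:theLND-filtration}; in particular (b) and (c) rest on the long explicit computations that pin down every automorphism of $R_{n,e,P}$, including the congruence conditions on the coefficients $f_{d-i}$. You instead take Theorem \ref{Thm:theLND-filtration} (the filtration and its graded algebra) as the sole technical input, push any isomorphism or automorphism through $\mathrm{gr}$ via Proposition \ref{prop:iso-preserve-filtration}, and read the parameters off the graded relations $\overline{s}^{d}=\overline{x}^{n}\overline{y}$ and $\overline{y}^{m}=\overline{x}^{e}\overline{z}$: a graded isomorphism must send each rank-one graded piece onto its counterpart, and comparing the two relations forces $n_{1}=n_{2}$, $b=0$, then $e_{1}=e_{2}$. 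For (b)/(d) you need only the leading scalars $\delta,\epsilon$ together with $\mathcal{F}_{d-1}\subset k[x,s]$ and $\mathcal{F}_{md-1}\subset B_{n,P}$, not closed formulas for $\alpha$. This is shorter and more conceptual; what it gives up is precisely what the paper's heavier route buys elsewhere, namely the explicit automorphism group and the finer isomorphism classification allowing $P_{1}\neq P_{2}$, both reused later for the description of $\mathrm{Aut}_{k}$ and the cancellation examples. Your treatment of (a) and (e) (almost-rigidity via Proposition \ref{Pro:semi-rigid one dimensional} plus stability of each subalgebra under $\partial$; reading $\mathrm{AL}_{i}$ off the filtration) coincides with the paper's.

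Two small caveats, both patchable inside your framework. First, the ``pairwise non-isomorphic'' claim of (c) ranges over the whole chain, including the Danielewski members $B_{n_{0},P}\simeq R_{n_{0},0,P}$, while your graded computation silently assumes $e_{1},e_{2}\geq1$ when it takes $\overline{z}_{i}$ as a free generator of the degree-$md$ piece. When $e_{2}=0$ that piece is $k[\overline{x}_{2}]\,\overline{y}_{2}^{\,m}$, and the same comparison gives $\delta^{m}=\epsilon a^{e_{1}}\overline{x}_{2}^{\,e_{1}}$, forcing $e_{1}=0$; so the mixed case does follow, but it needs to be said. Second, $B_{1,P}$ is not semi-rigid (its Makar-Limanov invariant is $k$), so Proposition \ref{prop:iso-preserve-filtration} does not apply to it at all; that case must be disposed of by comparing Makar-Limanov invariants, which is exactly how the paper handles it in Proposition \ref{prop:not iso trivial case}.
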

\begin{proof}
\emph{$($}a$)$\emph{ }Is immediate by Corollary \ref{Coro:semi-rigidity- of-the-general-case}
below. $($b$)$ is an immediate consequence of Theorem \ref{Theo:auto.new example }
below. $($c$)$ a consequence of Proposition \ref{prop:iso non-tivial case}
and Proposition \ref{prop:not iso trivial case} below. $($a$)$,
$($b$)$, and Corollary \ref{Coro:semi-rigidity- of-the-general-case}
implies $($d$)$. Finally, $($e$)$ is a trivial consequence of
Theorem \ref{Thm:theLND-filtration}, and Definition \ref{Def:The-AL-iinvariant}.
\end{proof}

\subsection{\label{Sec:A toy example} A toy example}

\indent\newline\noindent  We will begin with a very elementary example
illustrating the steps needed to determine the $\mathrm{LND}$-filtration
and its associated graded algebra, and then we proceed to the general
case. We let 
\[
R=k[X,Y,Z]/\langle X^{2}Y-(Y^{2}-XZ)^{2}\rangle
\]
 and we let $x$, $y$, $z$ be the class of $X$, $Y$, and $Z$
in $R$. A direct computation reveals that the derivation 
\[
2XS\partial_{Y}+(4YS-X^{2})\partial_{Z}
\]
of $k[X,Y,Z]$ where $S:=Y^{2}-XZ$ is locally nilpotent and annihilates
the polynomial $X^{2}Y-(Y^{2}-XZ)^{2}$. Therefore, it induces a locally
nilpotent derivation $\partial$ of $R$ for which we have $\partial(x)=0$,
$\partial^{3}(y)=0,\partial^{5}(z)=0$. Furthermore, the element $s=y^{2}-xz$
is a local slice for $\partial$ with $\partial(s)=x^{3}$. So we
have $\deg_{\partial}(x)=0$, $\deg_{\partial}(y)=2$, $\deg_{\partial}(z)=4$,
$\deg_{\partial}(s)=1$. The kernel of $\partial$ is $k[x]$ and
the plinth ideal is the principal ideal generated by $x^{3}$.
\begin{prop}
\label{Prop:filtration for a toy example} With the notation above,
we have:

\emph{$($1$)$ }The $\partial$-filtration $\{\mathcal{F}_{i}\}_{i\in\mathbb{N}}$
is given by : 
\[
\mathcal{F}_{4i+2j+l}=k[x]s^{l}y^{j}z^{i}+\mathcal{F}_{4i+2j+l-1}
\]
where $i\in\mathbb{N}$, $j\in\{0,1\}$, $l\in\{0,1\}$.

\emph{$($2$)$} The associated graded algebra $\mathrm{Gr_{\partial}}(R)=\oplus_{i\in\mathbb{N}}R_{[i]}$,
where $R_{[i]}=\mathcal{F}_{i}/\mathcal{F}_{i-1}$, is generated by
$\overline{x}=\mathrm{gr}_{\partial}(x)$, $\overline{y}=\mathrm{gr}_{\partial}(y)$,
$\overline{z}=\mathrm{gr}_{\partial}(z)$, $\overline{s}=\mathrm{gr}_{\partial}(s)$
as an algebra over $k$ with relations $\overline{x}^{2}\overline{y}=\overline{s}^{2}$
and $\overline{x}\,\overline{z}=\overline{y}^{2}$, that is $\mathrm{Gr_{\partial}}(R)\simeq k[X,Y,Z,S]/\langle X^{2}Y-S^{2},X\, Z-Y^{2}\rangle$.
Furthermore:
\[
R_{[4i+2j+l]}=k[\overline{x}]\overline{s}^{l}\overline{y}^{j}\overline{z}^{i}
\]
where $i\in\mathbb{N}$, $j\in\{0,1\}$, $l\in\{0,1,2,3\}$.\end{prop}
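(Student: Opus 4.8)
The plan is to exploit the general machinery developed in Section~2, specifically the twisting technique of Proposition~\ref{Pro:when-it-is-proper} together with Proposition~\ref{prop:graded algebra}, to reduce the entire statement to checking a single primality condition. First I would set up the twisted presentation of $R$ by introducing the extra variable $S$ corresponding to the local slice $s=y^2-xz$. Since $\ker\partial=k[x]$ with $x(0)=0$, and $\mathrm{pl}(\partial)=(x^3)$ is principal with $\partial(s)=x^3$, the hypotheses of \S\ref{Sec:A toy example} are met: we have $\mathcal{F}_0=k[x]$ and $\mathcal{F}_1=k[x]s+k[x]$. I would then write $R\simeq k[X,S,Y,Z]/J$ where $J$ is generated by the relation $X^2Y-S^2$ (the original Danielewski-type relation in terms of $S$) together with the twisting relation $S-(Y^2-XZ)$, i.e.\ $XZ-Y^2+S$. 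The weight degree function is forced by the degree data already computed: $\omega(X)=0$, $\omega(S)=1$, $\omega(Y)=2$, $\omega(Z)=4$.

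Next I would compute the highest-homogeneous-component ideal $\hat{J}$ relative to $\omega$. For the first generator $X^2Y-S^2$ both terms have $\omega$-degree $2$, so its top component is $X^2Y-S^2$ itself. For the twisting generator $XZ-Y^2+S$, the terms $XZ$ and $Y^2$ both have weight $4$ while $S$ has weight $1$, so the top component is $XZ-Y^2$. Thus the natural candidate is $\hat{J}=\langle X^2Y-S^2,\ XZ-Y^2\rangle$, matching the claimed presentation of $\mathrm{Gr}_\partial(R)$ in part~(2). The crucial step, and the one I expect to be the main obstacle, is verifying that $\hat{J}$ is actually prime (equivalently, that the filtration $\mathcal{G}$ is proper, by Proposition~\ref{Pro:when-it-is-proper}), and that it equals the full ideal of top components and not merely the ideal generated by the top components of these two chosen generators. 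Establishing primality I would do by showing $k[X,Y,Z,S]/\hat{J}$ is a domain: eliminating $S^2=X^2Y$ and $Y^2=XZ$ exhibits it as an integral extension of $k[X]$, and I would argue it embeds into a localization (e.g.\ into $k[X^{\pm1},S]$ via $Y\mapsto S^2/X^2$, $Z\mapsto Y^2/X=S^4/X^5$) to confirm it is a domain of the expected dimension $2$.

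Once primality of $\hat{J}$ is in hand, Proposition~\ref{Pro:when-it-is-proper} gives that $\mathcal{G}$ is proper, hence by Lemma~\ref{lem:equal-if-it-is-proper} we get $\mathcal{F}=\mathcal{G}$, which is exactly the filtration formula in part~(1): the graded pieces $\mathcal{F}_{4i+2j+l}/\mathcal{F}_{4i+2j+l-1}$ are spanned by the monomials $s^l y^j z^i$ over $k[x]$, and properness guarantees that these monomials have the stated distinct degrees with no collapse. I would reconcile the index ranges by noting that in $R$ the relation $x^2y=s^2$ lets one reduce $s^2$ to $x^2 y$, so at the level of $R$ itself the exponent $l$ ranges over $\{0,1\}$, whereas in the associated graded algebra $\mathrm{Gr}_\partial(R)$ the relation becomes $\overline{x}^2\overline{y}=\overline{s}^2$ and $\overline{s}$ is genuinely a new generator, so $l$ ranges over $\{0,1,2,3\}$ before using $\overline{y}^2=\overline{x}\,\overline{z}$ to bound $j\in\{0,1\}$. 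Finally, part~(2) follows directly from Proposition~\ref{prop:graded algebra}, which yields $\mathrm{Gr}_\partial(R)\simeq k^{[4]}/\hat{J}=k[X,Y,Z,S]/\langle X^2Y-S^2,\ XZ-Y^2\rangle$, and the monomial basis of each graded component is read off from the standard monomials modulo $\hat{J}$.
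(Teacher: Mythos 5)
Your main line coincides with the paper's own proof: the same twisted presentation $J=\langle X^{2}Y-S^{2},\,Y^{2}-XZ-S\rangle$ with weights $\omega(X,S,Y,Z)=(0,1,2,4)$, the same top-component ideal $\hat{J}=\langle X^{2}Y-S^{2},\,XZ-Y^{2}\rangle$, then Proposition \ref{Pro:when-it-is-proper} combined with Lemma \ref{lem:equal-if-it-is-proper} for part (1) and Proposition \ref{prop:graded algebra} for part (2). On one point you go further than the paper: the paper merely asserts that $\hat{J}$ is prime, whereas your embedding $Y\mapsto S^{2}/X^{2}$, $Z\mapsto S^{4}/X^{5}$ into $k[X^{\pm1},S]$ is a correct way to prove it (reduce any polynomial modulo $S^{2}-X^{2}Y$ and $Y^{2}-XZ$ to a $k[X]$-combination of monomials $S^{l}Y^{j}Z^{i}$ with $l,j\in\{0,1\}$; these map to pairwise distinct Laurent monomials, giving injectivity). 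Two caveats there: your intermediate claim that the quotient is an ``integral extension of $k[X]$'' is false --- it has dimension $2$, so it cannot be integral over $k[X]$, nor even over $k[X,S]$, since otherwise $S^{2}/X^{2}$ would lie in the integrally closed ring $k[X,S]$ --- but the Laurent embedding makes that claim unnecessary. Also, your worry that $\hat{J}$ is by definition generated by top components of \emph{all} elements of $J$, not just of the two chosen generators, is legitimate; the paper silently ignores it, and your sketch flags it without closing it (it can be closed, e.g., by a dimension argument comparing the $2$-dimensional domain $k^{[4]}/\langle X^{2}Y-S^{2},XZ-Y^{2}\rangle$ with its quotient $k^{[4]}/\hat{J}$).

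The genuine error is your ``reconciliation'' of the index ranges. The relation $\overline{s}^{2}=\overline{x}^{2}\overline{y}$ holds in $\mathrm{Gr}_{\partial}(R)$ exactly as $s^{2}=x^{2}y$ holds in $R$; the fact that $\overline{s}$ is ``genuinely a new generator'' does not exempt it from this relation, so the exponent of $\overline{s}$ reduces to $\{0,1\}$ on the graded side as well. The range $l\in\{0,1,2,3\}$ in part (2) of the statement is simply a typo (compare Theorem \ref{Thm:theLND-filtration}, where $l\in\{0,\ldots,d-1\}$ and $j\in\{0,\ldots,m-1\}$; here $d=m=2$): taken literally with $l=2$, $j=i=0$ it would assert $R_{[2]}=k[\overline{x}]\,\overline{s}^{2}=\overline{x}^{2}k[\overline{x}]\,\overline{y}$, which is a proper subset of $R_{[2]}=k[\overline{x}]\,\overline{y}$. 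So the printed range should be corrected, not defended. Relatedly, the one-monomial-per-degree formula of part (1) does not follow from properness alone, which only yields $\mathcal{F}_{r}=\sum_{u+2v+4w\leq r}k[x]\,s^{u}y^{v}z^{w}$; the paper's proof spends its entire second half reducing an arbitrary monomial $s^{u}y^{v}z^{w}$ inside $R$ via $s^{2}=x^{2}y$ and the non-homogeneous relation $y^{2}=s+xz$, whose $s$-term is what falls into $\mathcal{F}_{r-1}$. Your alternative --- reduce standard monomials modulo $\hat{J}$ on the graded side and pull back through Proposition \ref{prop:graded algebra} --- is a perfectly workable substitute for that reduction, but only once the ranges $l,j\in\{0,1\}$ are stated correctly.
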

\begin{proof}
1) First, the $\partial$-filtration \textit{$\{\mathcal{F}_{i}\}_{i\in\mathbb{N}}$}
is given by $\mathcal{F}_{r}=\sum_{h\leq r}H_{h}$ where $H_{h}:=\sum_{u+2v+4w=h}k[x]\left(s^{u}y^{v}z^{w}\right)$
and $u,v,w,h\in\mathbb{N}$. To show this, let $J$ be the ideal in
$k^{[4]}=k[X,Y,Z,S]$ defined by 
\[
J=\langle X^{2}Y-S^{2},Y^{2}-XZ-S\rangle.
\]
Define an $\mathbb{N}$-weight degree function $\omega$ on $k^{[4]}$
by declaring that $\omega(X)=0$, $\omega(S)=1$, $\omega(Y)=2$,
and $\omega(Z)=4$. By Proposition \ref{Pro:when-it-is-proper}, the
$\mathbb{N}$-filtration \textit{$\{\mathcal{G}_{r}\}_{i\in\mathbb{N}}$}
where $\mathcal{G}_{r}=\sum_{h\leq r}H_{h}$ is proper if and only
if $\hat{J}$ is prime. Which is the case since $\hat{J}=\left\langle X^{2}Y-S^{2},Y^{2}-XZ\right\rangle $
is prime. Thus by Lemma \ref{lem:equal-if-it-is-proper} we get the
desired description.

Second, let \textit{$l\in\{0,1\}$ }\textit{\emph{and }}\textit{$j\in\{0,1,2,3\}$
}\textit{\emph{be such that }} $l:=r$ mod $2$, $j:=r-l$ mod $4$,
and $i:=\frac{r-2j-l}{4}$. Then we get the following unique expression
$r=4i+2j+l$. Since $\mathcal{F}_{r}=\sum_{u+2v+4w=r}k[x]\left(s^{u}y^{v}z^{w}\right)+\mathcal{F}_{r-1}$,
we conclude in particular that $\mathcal{F}_{r}\supseteq k[x]s^{l}y^{j}z^{i}+\mathcal{F}_{r-1}$.
For the other inclusion, the relation $x^{2}y=s^{2}$ allows to write
$s^{u}y^{v}z^{w}=x^{e}s^{l}y^{v_{0}}z^{w}$ and from the relation
$y^{2}=s+zx$ we get $x^{e}s^{l}y^{v_{0}}z^{w}=x^{e}s^{l}y^{j}(s+xz)^{n}z^{w}$.
Since the monomial with the highest degree relative to $\deg_{\partial}$
in $(s+xz)^{n}$ is $x^{n}.z^{n}$, we deduce that $x^{e}s^{l}y^{j}(s+xz)^{n}z^{w}=x^{e+n}s^{l}y^{j}z^{w+n}+\sum M_{\beta}$
where $M_{\beta}$ is monomial in $x$, $y$, $s$, $z$ of degree
less than $r$. Since the expression $r=4i+2j+l$ is unique, we get
$w+n=i$. So $s^{u}y^{v}z^{w}=x^{e+n}s^{l}y^{j}z^{i}+f$ where $f\in\mathcal{F}_{r-1}$.
Thus $k[x]\left(s^{u}y^{v}z^{w}\right)\subseteq k[x]s^{l}y^{j}z^{i}+\mathcal{F}_{r-1}$
and finally $\mathcal{F}_{r}=k[x]s^{l}y^{j}z^{i}+\mathcal{F}_{r-1}$.

2) By part (1), an element $f$ of degree $r$ can be written as $f=g(x)s^{l}y^{j}z^{i}+f_{0}$
where $f_{0}\in\mathcal{F}_{r-1}$, $l=r$ mod $2$, $j=r-l$ mod
$4$, $i=\frac{r-2j-l}{4}$, and $i\in\mathbb{N}$, $j\in\{0,1\}$,
$l\in\{0,1\}$. So by Lemma \ref{lem:graded relation}, P2, P1, and
P3, respectively we get 
\[
\overline{f}=\overline{g(x)s^{l}y^{j}z^{i}+h}=\overline{g(x)s^{l}y^{j}z^{i}}=\overline{g(x)}\overline{s}^{l}\overline{y}^{j}\overline{z}^{i}=g(\overline{x})\overline{s}^{l}\overline{y}^{j}\overline{z}^{i}
\]
 and therefore $\overline{B}_{[4i+2j+l]}=k[\overline{x}]\overline{s}^{l}\overline{y}^{j}\overline{z}^{i}$. 

Finally, by Proposition \ref{prop:graded algebra}, $\mathrm{Gr}_{\partial}(B)=k[X,Y,Z,S]/\langle X^{2}Y-S^{2},X\, Z-Y^{2}\rangle$.
\end{proof}

\subsection{\label{Sec:a more general case} The general case}

\indent\newline\noindent  We now consider more generally rings $R_{n,e,P,Q}$
 of the form 
\[
k[X,Y,Z]/\left\langle X^{n}Y-P\left(X,Q(X,Y)-X^{e}Z\right)\right\rangle 
\]
 where $e\geq0$, $n\geq1$, $(n,e)\neq(1,0)$, $d,m\geq2$, 
\[
P(X,S)=S^{d}+f_{d-1}(X)S^{d-1}+\cdots+f_{1}(X)S+f_{0}(X)\,\,,\,\,\,\,\and
\]
\[
Q(X,Y)=Y^{m}+g_{m-1}(X)Y^{m-1}+\cdots+g_{1}(X)Y+g_{0}(X)
\]

Up to a change of variable of the form $Y\mapsto Y-c$ where $c\in k$,
we may assume that $0\in\mathrm{Spec}(R_{n,e,P,Q})$. Let $x$, $y$,
$z$ be the class of $X$, $Y$, and $Z$ in $R_{n,e,P,Q}$. Define
$\partial$ by: $\partial(x)=0$, $\partial(s)=x^{n+e}$ where $s:=Q(x,y)-x^{e}z$.
Considering the relation $x^{n}y=P(x,Q(x,y)-x^{e}z)$, a simple computation
leads to $\partial(y)=x^{e}\frac{\partial P}{\partial s}$ ,$\partial(z)=\frac{\partial Q}{\partial y}\frac{\partial P}{\partial s}-x^{n}$,
that is 
\[
\partial:=x^{e}\frac{\partial P}{\partial s}\partial_{y}+(\frac{\partial Q}{\partial y}\,\frac{\partial P}{\partial s}-x^{n})\partial_{z}
\]
where $\frac{\partial P}{\partial s}=ds^{d-1}+(d-1)f_{d-1}(x)s^{d-2}+\cdots+f_{1}(x)$,
and $\frac{\partial Q}{\partial y}=my^{m-1}+(m-1)g_{m-1}(x)y^{m-2}+\cdots+g_{1}(x)$.
Since $\partial(x^{n}y-P(x,Q(x,y)-x^{e}z))=0$ and $\partial^{d+1}(y)=0,\partial^{md+1}(z)=0$,
$\partial$ is a well-defined locally nilpotent derivation of \textbf{$R_{n,e,P,Q}$}.
The element $s$ is a local slice for $\partial$ by construction,
and a direct computation shows that $\deg_{\partial}(x)=0$, $\deg_{\partial}(y)=d$,
$\deg_{\partial}(z)=md$ and $\deg_{\partial}(s)=1$. The kernel of
$\partial$ is equal to $k[x]$. One checks further that the plinth
ideal is equal to $\mathrm{pl}(\partial)=\left\langle x^{n+e}\right\rangle $.
Furthermore, the exact same method as in the proof of Proposition
\ref{Prop:filtration for a toy example} provides a full description
of the $\partial$-filtration and its associated graded algebra, that
is:
\begin{thm}
\label{Thm:theLND-filtration} Let $\partial$ be defined as above,
then we have:

\emph{$($1$)$} The $\partial$-filtration $\mathcal{F}=\{\mathcal{F}_{i}\}_{i\in\mathbb{N}}$
is given by : 
\[
\mathcal{F}_{mdi+dj+l}=k[x]s^{l}y^{j}z^{i}+\mathcal{F}_{mdi+dj+l-1}
\]
where $i\in\mathbb{N}$, $j\in\{0,\ldots,m-1\}$, $l\in\{0,\ldots,d-1\}$.

\emph{$($2$)$ }The associated graded algebra $\mathrm{Gr}(R_{n,e,P,Q})=\oplus_{i\in\mathbb{N}}R_{[i]}$,
where $R_{[i]}=\mathcal{F}_{i}/\mathcal{F}_{i-1}$, is generated by
$\overline{x}=\mathrm{gr}_{\partial}(x)$, $\overline{y}=\mathrm{gr}_{\partial}(y)$,
$\overline{z}=\mathrm{gr}_{\partial}(z)$, $\overline{s}=\mathrm{gr}_{\partial}(s)$
as an algebra over $k$ with relations $\overline{x}^{n}\overline{y}=\overline{s}^{d}$
and $\overline{x}^{e}\,\overline{z}=\overline{y}^{m}$ , that is,
$\mathrm{Gr}(R_{n,e,P,Q})=k[X,Y,Z,S]/\langle X^{n}Y-S^{d},X^{e}Z-Y^{m}\rangle$.
In addition, we have: 
\[
R_{[mdi+dj+l]}=k[\overline{x}]\overline{s}^{l}\overline{y}^{j}\overline{z}^{i}
\]
where $i\in\mathbb{N}$, $j\in\{0,\ldots,m-1\}$, $l\in\{0,\ldots,d-1\}$.\end{thm}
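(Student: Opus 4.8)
The plan is to mirror exactly the two-part argument that was carried out for the toy example in Proposition~\ref{Prop:filtration for a toy example}, since the author explicitly announces that ``the exact same method'' applies. The essential structural input is already in hand: we know $\ker\partial=k[x]$, that $\mathrm{pl}(\partial)=\langle x^{n+e}\rangle$ is principal (so in the notation of the twisting subsection $m=1$ with a single local slice $s$), and that $\deg_\partial(x)=0$, $\deg_\partial(s)=1$, $\deg_\partial(y)=d$, $\deg_\partial(z)=md$. So first I would realize $R_{n,e,P,Q}$ inside $k^{[4]}=k[X,S,Y,Z]$ via the presentation
\[
R_{n,e,P,Q}\simeq k[X,S,Y,Z]/J,\qquad J=\langle X^nY-P(X,S),\,Q(X,Y)-X^eZ-S\rangle,
\]
and equip $k^{[4]}$ with the $\mathbb{N}$-weight degree function $\omega$ given by $\omega(X)=0$, $\omega(S)=1$, $\omega(Y)=d$, $\omega(Z)=md$, matching the computed degrees. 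With this setup the filtration $\mathcal{G}$ of \S\ref{sub:Filtration-and-the} has graded pieces $\mathcal{G}_r=\sum_{u+dv+mdw=r}k[x]\,s^uy^vz^w$, exactly the candidate description up to reindexing.

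Second, I would invoke Proposition~\ref{Pro:when-it-is-proper}: the filtration $\mathcal{G}$ is proper if and only if the initial ideal $\hat J$ is prime. The two generators of $J$ have highest $\omega$-homogeneous components $X^nY-S^d$ (the term $P(X,S)$ has top part $S^d$ of weight $d$, matching $\omega(X^nY)=d$) and $Y^m-X^eZ$ (the top part of $Q(X,Y)$ is $Y^m$ of weight $md$, matching $\omega(X^eZ)=md$, while $S$ has weight $1<md$). The key claim is therefore that
\[
\hat J=\langle X^nY-S^d,\ X^eZ-Y^m\rangle
\]
is prime, equivalently that $k[X,Y,Z,S]/\hat J$ is a domain. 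Granting this, Proposition~\ref{Pro:when-it-is-proper} forces $\mathcal{G}=\mathcal{F}$ via Lemma~\ref{lem:equal-if-it-is-proper}, and then Proposition~\ref{prop:graded algebra} yields immediately $\mathrm{Gr}_\partial(R_{n,e,P,Q})\simeq k^{[4]}/\hat J$, which is part~(2). Part~(1) then follows by the same monomial-reduction bookkeeping as in the toy case: using $\overline{x}^{\,n}\overline{y}=\overline{s}^{\,d}$ and $\overline{x}^{\,e}\overline{z}=\overline{y}^{\,m}$ one reduces any monomial $s^uy^vz^w$ modulo lower-degree terms to the normal form $s^ly^jz^i$ with $l\in\{0,\dots,d-1\}$, $j\in\{0,\dots,m-1\}$, and $i\in\mathbb{N}$, giving the unique expression $r=mdi+dj+l$ and hence $\mathcal{F}_r=k[x]s^ly^jz^i+\mathcal{F}_{r-1}$. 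Applying P1--P3 of Lemma~\ref{lem:graded relation} to $\mathrm{gr}_\partial$ transports this to $R_{[mdi+dj+l]}=k[\overline{x}]\,\overline{s}^{\,l}\overline{y}^{\,j}\overline{z}^{\,i}$.

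I expect the primality of $\hat J$ to be the only genuine obstacle; everything else is the formal machinery already packaged in Propositions~\ref{Pro:when-it-is-proper} and~\ref{prop:graded algebra} and the elementary monomial arithmetic. To establish primality I would argue directly that $A:=k[X,Y,Z,S]/\hat J$ is an integral domain. The relation $X^eZ=Y^m$ lets one eliminate $Z$ away from $X=0$, and $X^nY=S^d$ eliminates $Y$; concretely, I would exhibit $A$ as (isomorphic to) the sub-algebra of the Laurent-polynomial domain $k[X^{\pm1},S]$ generated by $X$, $S$, $\overline{y}=S^d/X^n$, and $\overline{z}=\overline{y}^{\,m}/X^e=S^{md}/X^{nm+e}$, so that $A$ embeds into a domain and is therefore a domain itself, forcing $\hat J$ prime. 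This is precisely the geometric picture described in the ``Algebraic construction'' subsection, where $\mathrm{Gr}_\partial(R_{n,e,P,Q})$ reappears as the graded Danielewski-type ring $R_{n,e,S^d,Y^m}$; one should check the embedding is well defined and that the kernel of the surjection $k^{[4]}\twoheadrightarrow A$ is exactly $\hat J$ (not merely contained in it), using that both generators of $\hat J$ are irreducible and that $A$ has the expected transcendence degree $2$ over $k$.
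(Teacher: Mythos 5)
Your proposal is correct and takes essentially the same route as the paper: the same presentation $R_{n,e,P,Q}\simeq k[X,S,Y,Z]/J$ with $J=\langle X^nY-P(X,S),\,Q(X,Y)-X^eZ-S\rangle$, the same weights $\omega=(0,1,d,md)$, the same identification $\hat J=\langle X^nY-S^d,\,Y^m-X^eZ\rangle$, the same chain of citations (Proposition \ref{Pro:when-it-is-proper}, Lemma \ref{lem:equal-if-it-is-proper}, Proposition \ref{prop:graded algebra}), and the same reduction of part (1) to the monomial bookkeeping of Proposition \ref{Prop:filtration for a toy example}. The one place you go beyond the paper is in sketching why $\hat J$ is prime (the paper simply asserts this); be aware that identifying $k^{[4]}/\hat J$ with the subalgebra of $k[X^{\pm1},S]$ generated by $X,S,S^d/X^n,S^{md}/X^{nm+e}$ is itself equivalent to the primality claim, so a self-contained argument should instead verify that $X$ is a nonzerodivisor modulo $\hat J$ (e.g. using that the two generators form a regular sequence, so the quotient is an unmixed complete intersection and $\dim k^{[4]}/(\hat J+\langle X\rangle)=1<2$) and then conclude by localizing at $X$, where the ideal visibly becomes prime.
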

\begin{proof}
Consider $\omega\in\mathbb{N}^{[4]}$ the $\mathbb{N}$-weight degree
function on $k^{[4]}=k[X,S,Y,Z]$ defined by $\omega(X,S,Y,Z)=(0,1,d,md)$.
Let $J=\langle X^{n}Y-P(X,S),Q(X,Y)-X^{e}Z-S\rangle$, it is clear
that we can identify $R_{n,e,P,Q}$ with $R_{n,e,P,Q}\simeq k[X,S,Y,Z]/\left\langle X^{n}Y-P\left(X,S\right),Q(X,Y)-X^{e}Z-S\right\rangle $.
Since $\hat{J}=\langle X^{n}Y-S^{d},Y^{m}-X^{e}Z\rangle$ and $\langle X^{n}Y-S^{d},Y^{m}-X^{e}Z\rangle$
is a prime ideal in $k^{[4]}$, $\omega$ induces $\omega_{R_{n,e,P,Q}}$
(defined as in Proposition \ref{Pro:when-it-is-proper}) a degree
function on $k[X,S,Y,Z]/\left\langle X^{n}Y-P\left(X,S\right),Q(X,Y)-X^{e}Z-S\right\rangle $.
The latter coincides with $\deg_{\partial}$ by virtue of Lemma \ref{lem:equal-if-it-is-proper}.
Hence by Proposition \ref{prop:graded algebra}, the associated graded
algebra is given by $\mathrm{Gr}(R_{n,e,P,Q})=k[X,Y,Z,S]/\langle X^{n}Y-S^{d},X^{e}Z-Y^{m}\rangle$.
The explicit description of $\mathcal{F}_{mdi+dj+l}$ and $R_{[mdi+dj+l]}$,
using the exact same method as in the proof of Proposition \ref{Prop:filtration for a toy example},
is left to the reader.\end{proof}
\begin{cor}
\label{Coro:semi-rigidity- of-the-general-case} With the above notation,
the following hold:

\emph{$($1$)$ }$\mathrm{ML}(R_{n,e,P,Q})=k[x]$. Consequently, $R_{n,e,P,Q}$
is semi-rigid, and its unique $\mathrm{LND}$-filtration is the $\partial$-filtration. 

\emph{$($2$)$} Every $D\in\mathrm{LND}(R_{n,e,P,Q})$ has the form
$D=f(x)\partial$. Consequently, $R_{n,e,P,Q}$ is almost-rigid.\end{cor}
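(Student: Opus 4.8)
The plan is to prove the single statement that \emph{every non-zero $E\in\mathrm{LND}(R_{n,e,P,Q})$ has $\ker E=k[x]$}; the rest of the Corollary is then formal. First I would record the trivial inclusion $\mathrm{ML}(R_{n,e,P,Q})\subseteq\ker\partial=k[x]$, which reduces the whole problem to showing $x\in\ker E$ for an arbitrary non-zero $E$. To get a grip on $E$ I would invoke Daigle's Proposition \ref{Pro:Daigle}: since $R_{n,e,P,Q}$ has finite transcendence degree over $k$, $E$ respects the $\partial$-filtration and induces a non-zero homogeneous locally nilpotent derivation $\overline{E}$, of degree $\delta=\deg_{\partial}E$, on the graded domain $G:=\mathrm{Gr}_{\partial}(R_{n,e,P,Q})=k[X,Y,Z,S]/\langle X^{n}Y-S^{d},\,X^{e}Z-Y^{m}\rangle$ computed in Theorem \ref{Thm:theLND-filtration}.

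The core is a lemma about $G$ alone: \emph{every non-zero homogeneous $\overline{D}\in\mathrm{LND}(G)$ annihilates $\overline{x}$ and has degree $-1$.} Granting the first assertion, the second is cheap. If $\overline{x}\in\ker\overline{D}$, then $\overline{D}$ extends to the localization $G[\overline{x}^{-1}]=k[\overline{x}^{\pm1},\overline{s}]$ (using $\overline{y}=\overline{s}^{d}\overline{x}^{-n}$ and $\overline{z}=\overline{s}^{dm}\overline{x}^{-nm-e}$), and since $\overline{x}^{-1}\in\ker$, the extension is again locally nilpotent. As $k[\overline{x}^{\pm1}]$ is rigid, the Makar-Limanov example (polynomial rings over rigid domains, proved above) forces every element of $\mathrm{LND}(k[\overline{x}^{\pm1},\overline{s}])$ to have the form $c(\overline{x})\partial_{\overline{s}}$, which is homogeneous of degree $-1$ for the $\overline{s}$-grading; restricting back, $\deg\overline{D}=-1$. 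Applied to $\overline{E}$ this gives $\delta=\deg_{\partial}E=-1$, whence $\deg_{\partial}(E(x))\le\deg_{\partial}(x)+\delta=-1$ and therefore $E(x)=0$, as $\deg_{\partial}$ takes values in $\mathbb{N}\cup\{-\infty\}$.

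The hard part, and the only genuine obstacle, is showing $\overline{x}\in\ker\overline{D}$ for every homogeneous $\overline{D}\in\mathrm{LND}(G)$, i.e. that $\mathrm{ML}(G)=k[\overline{x}]$. I would argue by contradiction, exploiting that $A:=\ker\overline{D}$ is factorially closed, graded, of transcendence degree one. From $\overline{x}^{n}\overline{y}=\overline{s}^{d}$ one reads off $\overline{s}\in A\Rightarrow\overline{x}\in A$, and from $\overline{x}^{e}\overline{z}=\overline{y}^{m}$ (when $e\ge1$) that $\overline{y}\in A\Rightarrow\overline{x}\in A$; combining these constraints with a leading-term analysis of $\overline{D}(\overline{x})$ inside the embedding $G\subset k[\overline{x}^{\pm1},\overline{s}]$ should force $\overline{D}(\overline{x})=0$. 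This is exactly where the hypothesis $(n,e)\ne(1,0)$ is indispensable: the excluded case collapses $G$ to the Danielewski surface $k[X,Y,S]/\langle XY-S^{d}\rangle$ (here $Z=Y^{m}$ is redundant), whose Makar-Limanov invariant is $k$ and which carries transverse locally nilpotent derivations not killing $\overline{x}$; in every remaining case the two relations together rigidify the surface. I expect this case analysis to be the longest and most delicate step of the whole argument.

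Finally I would assemble the Corollary. Having $x\in\ker E$, I note that $k[x]$ is algebraically closed in $R_{n,e,P,Q}$: indeed $R_{n,e,P,Q}\cap k(x)=k[x]$ because $\partial$ vanishes on $k(x)$ (so any $f\in R_{n,e,P,Q}\cap k(x)$ lies in $\ker\partial=k[x]$), and $k(x)$ is algebraically closed in $\mathrm{Frac}(R_{n,e,P,Q})=k(x,s)$ since $s$ is transcendental over $k(x)$. As $\ker E$ is factorially closed of transcendence degree one and contains $k[x]$, it is algebraic over $k(x)$ and hence equals $k[x]$. Thus all kernels coincide with $k[x]=\ker\partial$, giving $\mathrm{ML}(R_{n,e,P,Q})=k[x]$ and, by Proposition \ref{prop:semi-rigid=00003DML=00003Dker}, the semi-rigidity of $R_{n,e,P,Q}$, its unique $\mathrm{LND}$-filtration being the $\partial$-filtration. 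For part (2), from $\ker E=\ker\partial$ and \cite[Principle 12]{key-gene} there exist non-zero $a,b\in k[x]$ with $aE=b\partial$; irreducibility of $\partial$ forces $a\mid b$, so $E=f(x)\partial$ with $f=b/a\in k[x]$. Almost-rigidity then follows directly (or from Proposition \ref{Pro:semi-rigid one dimensional}, since $R_{n,e,P,Q}$ is Noetherian, hence satisfies the ACC on principal ideals, and $\mathrm{ML}(R_{n,e,P,Q})=k[x]=k^{[1]}$ is a PID, hence an HCF-ring).
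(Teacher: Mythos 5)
Your skeleton matches the paper's own proof at the top level: pass to the graded domain $G:=\mathrm{Gr}_{\partial}(R_{n,e,P,Q})$ via Proposition \ref{Pro:Daigle}, exploit the explicit grading of Theorem \ref{Thm:theLND-filtration}, and finish with algebraic closedness of $k[x]$, Proposition \ref{prop:semi-rigid=00003DML=00003Dker}, and Proposition \ref{Pro:semi-rigid one dimensional} for part (2); your localization observation that $G[\overline{x}^{-1}]=k[\overline{x}^{\pm1},\overline{s}]$, so that $\overline{x}\in\ker\overline{D}$ forces $\deg\overline{D}=-1$ and hence $E(x)=0$, is correct and a nice packaging. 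However, there is a genuine gap exactly where you concede one: the assertion that every non-zero homogeneous $\overline{D}\in\mathrm{LND}(G)$ kills $\overline{x}$ (equivalently $\mathrm{ML}(G)=k[\overline{x}]$) \emph{is} the whole content of the Corollary, and you leave it at ``a leading-term analysis \ldots{} should force $\overline{D}(\overline{x})=0$''. The two implications you do record --- $\overline{s}\in\ker\overline{D}\Rightarrow\overline{x}\in\ker\overline{D}$ from $\overline{x}^{n}\overline{y}=\overline{s}^{d}$, and $\overline{y}\in\ker\overline{D}\Rightarrow\overline{x}\in\ker\overline{D}$ from $\overline{x}^{e}\overline{z}=\overline{y}^{m}$ --- do not close the argument: since $\ker\overline{D}$ is a graded subalgebra of transcendence degree one, it contains a non-constant homogeneous element, and by Theorem \ref{Thm:theLND-filtration} every graded piece is $k[\overline{x}]\,\overline{s}^{l}\overline{y}^{j}\overline{z}^{i}$; so the unavoidable remaining case is a kernel element divisible by $\overline{z}$ but by neither $\overline{s}$ nor $\overline{y}$, where neither of your implications applies and a leading-term computation in $k[\overline{x}^{\pm1},\overline{s}]$ gives nothing (you cannot invert $\overline{x}$ before knowing $\overline{D}(\overline{x})=0$).

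The paper supplies precisely the missing idea for that third case: if $\overline{z}\in\ker\overline{D}$, then $\overline{D}$ extends to a locally nilpotent derivation $\widetilde{D}$ of $\widetilde{R}=k(Z)[X,Y,S]/\langle X^{n}Y-S^{d},X^{e}Z-Y^{m}\rangle$ (one may invert the kernel element $\overline{z}$ and pass to scalars $k(Z)$); by the Jacobian criterion the origin of $\mathrm{Spec}(\widetilde{R})$ is a singular point, hence $\widetilde{R}$ is rigid by \cite[Corollary 1.29]{key-gene}, so $\widetilde{D}=0$ and therefore $\overline{D}=0$, a contradiction. The cases where $\overline{s}$ or $\overline{y}$ divides the kernel element are then exactly your factorial-closedness implications. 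Without this base-change-to-$k(Z)$ rigidity step (or a substitute), your proposal restates the key difficulty rather than resolving it. A secondary quibble: in part (2) your first route, ``irreducibility of $\partial$ forces $a\mid b$'', is unjustified, since elements of $k[x]$ need not be prime in $R_{n,e,P,Q}$ (indeed $R_{n,e,P,Q}/xR_{n,e,P,Q}$ is in general not a domain); your alternative route via Proposition \ref{Pro:semi-rigid one dimensional} is the correct one, and it is what the paper uses.
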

\begin{proof}
(1) Given a non-zero $D\in\mathrm{LND}(R_{n,e,P,Q})$. By Proposition
\ref{Pro:Daigle}, $D$ respects the $\partial$-filtration and induces
a non-zero locally nilpotent derivation $\overline{D}$ of $\mathrm{Gr}(R_{n,e,P,Q})$.
Suppose that $f\in\ker(D)\setminus k$, then $\overline{f}\in\ker(\overline{D})\setminus k$
is an homogenous element of $\mathrm{Gr}(R_{n,e,P,Q})$. So there
exists $i\in\mathbb{N}$ such that $\overline{f}\in R_{[i]}$.

Assume that $\overline{f}\notin k[\overline{x}]=R_{[0]}$, then one
of the elements $\overline{s}$, $\overline{y}$, $\overline{z}$
must divides $\overline{f}$ by Theorem \ref{Thm:theLND-filtration}.
This leads to a contradiction as follows: 

If $\overline{s}$ divides $\overline{f}$ , then $\overline{s}\in\ker(\overline{D})$
as $\ker(\overline{D})$ is factorially closed, and for the same reason
$\overline{x},\overline{y}\in\ker(\overline{D})$ due to the relation
$\overline{x}^{n}\overline{y}=\overline{s}^{d}$. Then by the relation
$\overline{x}^{e}\,\overline{z}=\overline{y}^{m}$, we must have $\overline{z}\in\ker(\overline{D})$,
which means $\overline{D}=0$, a contradiction. In the same way, we
get a contradiction if $\overline{y}$ divides $\overline{f}$. 

Finally, if $\overline{z}$ divides $\overline{f}$, then $\overline{D}(\overline{z})=0$.
So $\overline{D}$ induces in a natural way a locally nilpotent derivation
$\widetilde{D}$ of the ring $\widetilde{R}=k(Z)[X,Y,S]/\langle X^{n}\, Y-S^{d},X^{e}\, Z-Y^{m}\rangle$.
It follows from the Jacobian criterion that $0\in\mathrm{Spec(\widetilde{R})}$
is a singular point, therefore $\widetilde{R}$ is rigid, see \cite[Corollary 1.29]{key-gene}.
Hence $\widetilde{D}=0$, which implies $\overline{D}=0$, a contradiction.

So the only possibility is that $\overline{f}\in k[\overline{x}]$,
and this means $\deg_{\partial}(f)=0$, thus $f\in k[x]$ and $\ker(D)\subset k[x]$.
Finally, $k[x]=\ker(D)$ because $\mathrm{tr.deg_{k}}(\ker(D))=1$
and $k[x]$ is algebraically closed in $R_{n,e,P,Q}$. So $\mathrm{ML}(R_{n,e,P,Q})=k[x]$.

(2) follows immediately from Proposition \ref{Pro:semi-rigid one dimensional}.
\end{proof}
As a direct consequence of Theorem \ref{Coro:semi-rigidity- of-the-general-case},
we have the following two Corollary.
\begin{cor}
\textup{\emph{\label{cor:The ALi-invariant for the new class} The
$\mathrm{\mathrm{AL}}_{i}$-invariant of $R_{n,e,P,Q}$ are given
by:}}

\textup{$($1$)$}\textup{\emph{ $\mathrm{\mathrm{AL}}_{0}(R_{n,e,P,Q})=\mathcal{D}(B)=\mathrm{\mathrm{ML}}(R_{n,e,P,Q})=k[x]$.}}

\textup{$($2$)$}\textup{\emph{ $\mathrm{\mathrm{AL}}(R_{n,e,P,Q})=\mathrm{\mathrm{AL}}_{1}(R_{n,e,P,Q})=\cdots=\mathrm{\mathrm{AL}}_{d-1}(R_{n,e,P,Q})=k[x,s]$.}}

\textup{$($3$)$}\textup{\emph{ $B_{n,P}=\mathrm{\mathrm{AL}}_{d}(R_{n,e,P,Q})=\cdots=\mathrm{\mathrm{AL}}_{md-1}(R_{n,e,P,Q})=k[x,s,y]\simeq B_{n,P}$.}}

\textup{$($4$)$}\textup{\emph{ $\mathrm{\mathrm{AL}}_{md}(R_{n,e,P,Q})=R_{n,e,P,Q}$.}}
\end{cor}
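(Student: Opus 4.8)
The plan is to exploit semi-rigidity to collapse each $\mathrm{AL}_i$ to a single, explicitly known space. By Corollary \ref{Coro:semi-rigidity- of-the-general-case}, the domain $R_{n,e,P,Q}$ is semi-rigid and its unique $\mathrm{LND}$-filtration is the $\partial$-filtration $\mathcal{F}=\{\mathcal{F}_i\}_{i\in\mathbb{N}}$. In particular every non-zero $D\in\mathrm{LND}(R_{n,e,P,Q})$ induces exactly this filtration, so that $\ker D^{i+1}=\mathcal{F}_i$ for all $i$ and all non-zero $D$ (equivalently, since $D=f(x)\partial$ with $\partial(f)=0$ forces $D^{i+1}=f^{i+1}\partial^{i+1}$). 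Consequently, by Definition \ref{Def:The-AL-iinvariant}, the invariant $\mathrm{AL}_i(R_{n,e,P,Q})$ is nothing but the $k$-subalgebra of $R_{n,e,P,Q}$ generated by the single space $\mathcal{F}_i$. This is the crucial reduction: it removes the quantification over all locally nilpotent derivations and replaces it by a computation inside one filtration.

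First I would record the degrees of the four generators, namely $\deg_\partial(x)=0$, $\deg_\partial(s)=1$, $\deg_\partial(y)=d$, and $\deg_\partial(z)=md$, as given in \S\ref{Sec:a more general case}. Since a generator $w\in\{x,s,y,z\}$ lies in $\mathcal{F}_i$ precisely when $\deg_\partial(w)\le i$, the subalgebra generated by $\mathcal{F}_i$ will contain exactly those generators whose $\partial$-degree does not exceed $i$: only $x$ for $i=0$; $x$ and $s$ for $1\le i\le d-1$; $x,s,y$ for $d\le i\le md-1$; and all of $x,s,y,z$ for $i=md$. This already accounts for the four prescribed levels of the chain.

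To turn these observations into equalities I would establish both inclusions in each range. For ``$\supseteq$'' it suffices to note that the relevant generators genuinely belong to $\mathcal{F}_i$, whence the claimed algebra ($k[x]$, $k[x,s]$, $k[x,s,y]$, or $R_{n,e,P,Q}$) is contained in $\mathrm{AL}_i$. For the reverse inclusion I would invoke Theorem \ref{Thm:theLND-filtration}, which shows that $\mathcal{F}_i$ is spanned over $k[x]$ by the monomials $s^{l}y^{j}z^{i'}$ of degree $md\,i'+d\,j+l\le i$; thus no monomial involving $z$ occurs as long as $i\le md-1$, and none involving $y$ occurs as long as $i\le d-1$, so $\mathcal{F}_i$ is contained in the stated subalgebra. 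For the identifications at the endpoints I would use Proposition \ref{prop:semi-rigid=00003DML=00003Dker} to get $\mathrm{AL}_0=\mathcal{D}(R_{n,e,P,Q})=\mathrm{ML}(R_{n,e,P,Q})=k[x]$, and the isomorphism $k[x,s,y]\simeq B_{n,P}$ recorded in the algebraic construction of \S\ref{Sec:A-huge}.

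The computation is essentially bookkeeping once the reduction to a single filtration is in hand, so there is no serious analytic difficulty. The only point requiring care---the main, if modest, obstacle---is the reverse inclusion $\mathrm{AL}_i\subseteq(\text{claimed algebra})$, that is, ruling out that the subalgebra generated by $\mathcal{F}_i$ secretly contains a higher generator. This is precisely what the degree bookkeeping supplied by Theorem \ref{Thm:theLND-filtration} guarantees, since $z$ (respectively $y$) cannot appear in $\mathcal{F}_i$ before $i$ reaches $md$ (respectively $d$).
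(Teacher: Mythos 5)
Your proposal is correct and follows essentially the same route as the paper: the paper asserts this corollary as a direct consequence of Corollary \ref{Coro:semi-rigidity- of-the-general-case} together with Theorem \ref{Thm:theLND-filtration}, which is exactly your reduction (semi-rigidity forces $\ker D^{i+1}=\mathcal{F}_{i}$ for every non-zero $D$, so $\mathrm{AL}_{i}$ is the algebra generated by $\mathcal{F}_{i}$) followed by the degree bookkeeping on the generators $x,s,y,z$. Your explicit verification of the reverse inclusions via the monomial description of $\mathcal{F}_{i}$ is precisely the detail the paper leaves implicit.
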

Also, we have an interesting fact. Consider the following chain of
inclusions, realized by the identity for the first inclusion and by
sending $S$ to $Q(X,Y)-X^{e}Z$ for the second one. 
\[
\mathrm{\mathrm{AL}}(R_{n,e,P,Q})=k[X,S]\hookrightarrow B_{n,P}=k[X,S,Y]/\left\langle X^{n}Y-P\left(X,S\right)\right\rangle \hookrightarrow R_{n,e,P,Q}.
\]
Then every non-zero locally nilpotent derivation of $R_{n,e,P,Q}$
restricts to a non-zero locally nilpotent derivation of $k[x,s,y]\simeq B_{n,P}$
($\simeq R_{n,0,P,Q}$). Also, it restricts to a non-zero locally
nilpotent derivation of the sub-algebra $k[x,s]\simeq k^{[2]}$.
\begin{cor}
\label{cor:LND restricts} Every non-zero $D\in\mathrm{LND}(R_{n,e,P,Q})$
restricts to a non-zero locally nilpotent derivation of $B_{n,P}$
$($resp. $\mathrm{\mathrm{AL}}(R_{n,e,P,Q})$$)$.\\

\end{cor}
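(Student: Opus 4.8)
The plan is to exploit the almost-rigidity of $R_{n,e,P,Q}$ recorded in Corollary \ref{Coro:semi-rigidity- of-the-general-case}, which collapses the entire set $\mathrm{LND}(R_{n,e,P,Q})$ onto scalar-in-the-kernel multiples of a single explicit derivation. By part (2) of that Corollary, every non-zero $D\in\mathrm{LND}(R_{n,e,P,Q})$ can be written as $D=f(x)\partial$ with $f(x)\in k[x]=\ker(\partial)$ non-zero, where
\[
\partial=x^{e}\frac{\partial P}{\partial s}\partial_{y}+\Bigl(\frac{\partial Q}{\partial y}\,\frac{\partial P}{\partial s}-x^{n}\Bigr)\partial_{z}.
\]
Since $f(x)$ lies in both $\mathrm{AL}(R_{n,e,P,Q})=k[x,s]$ and $B_{n,P}=k[x,s,y]$, it suffices to show that $\partial$ itself restricts to a non-zero locally nilpotent derivation of each of these two sub-algebras; the statement for a general $D$ then follows upon multiplying by the factor $f(x)$.

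Next I would verify invariance by evaluating $\partial$ on the algebra generators. For $\mathrm{AL}(R_{n,e,P,Q})=k[x,s]$ this is immediate from $\partial(x)=0$ and $\partial(s)=x^{n+e}$, both of which lie in $k[x]\subset k[x,s]$, so $\partial(k[x,s])\subset k[x,s]$. For $B_{n,P}=k[x,s,y]$ I would additionally note that $\partial(y)=x^{e}\,\partial P/\partial s$, where $\partial P/\partial s=ds^{d-1}+(d-1)f_{d-1}(x)s^{d-2}+\cdots+f_{1}(x)$ is a polynomial in $x$ and $s$ alone; hence $\partial(y)\in k[x,s]\subset k[x,s,y]$. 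Because $\partial$ is a derivation, sending the generators into the sub-algebra is enough to conclude $\partial(B_{n,P})\subset B_{n,P}$.

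It then remains to confirm that the restrictions are non-zero and locally nilpotent. Non-vanishing is clear in both cases from $\partial(s)=x^{n+e}\neq0$. Local nilpotency is inherited: for any element $a$ of an invariant sub-algebra one has $\partial^{N}(a)=0$ for some $N$, computed already in $R_{n,e,P,Q}$, and every intermediate iterate $\partial^{i}(a)$ stays inside the sub-algebra, so the restriction is locally nilpotent. Writing $D=f(x)\partial$ with $f\neq0$, each restriction $D|_{k[x,s]}=f(x)\,\partial|_{k[x,s]}$ and $D|_{B_{n,P}}=f(x)\,\partial|_{B_{n,P}}$ is again non-zero, the sub-algebras being domains, and locally nilpotent.

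The main obstacle is conceptual rather than computational: the essential move is recognizing that almost-rigidity reduces the whole family of locally nilpotent derivations to multiples of the single explicit $\partial$, after which the argument is a direct inspection of the action on generators. The one point demanding a moment of care is confirming that $\partial P/\partial s$ is expressible purely in $x$ and $s$, which is exactly what guarantees that $\partial(y)$ involves neither $y$ nor $z$ and therefore remains inside $B_{n,P}$.
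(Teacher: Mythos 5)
Your proof is correct and follows essentially the same route as the paper, which presents this corollary as a direct consequence of the almost-rigidity statement in Corollary \ref{Coro:semi-rigidity- of-the-general-case}: every non-zero $D$ is $f(x)\partial$, and $\partial$ visibly preserves $k[x,s]=\mathrm{AL}(R_{n,e,P,Q})$ and $k[x,s,y]\simeq B_{n,P}$ since $\partial(x)=0$, $\partial(s)=x^{n+e}$, and $\partial(y)=x^{e}\,\partial P/\partial s$ all lie in $k[x,s]$. Your additional verifications (inheritance of local nilpotency on an invariant sub-algebra, non-vanishing via $\partial(s)\neq 0$ in a domain) are exactly the details the paper leaves implicit.
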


\subsection{{\Large $\mathrm{Aut_{k}}$} for the new class \ref{Sec:a more general case}}

\indent\newline\noindent  For simplicity we only deal with the case
where $Q(X,Y)=Y^{m}$. Up to change of variable of the form $S$ by
$S-\frac{f_{d-1}(X)}{d}$ we may assume without loss of generality
that $f_{d-1}(X)=0$. 

Let $R_{n,e,P}$ denote the ring 
\[
R_{n,e,P}:=R_{n,e,P,Y^{m}}=k[X,Y,Z]/\left\langle X^{n}Y-P(X,Y^{m}-X^{e}Z)\right\rangle =k[x,y,z]
\]
where $P(X,S)=S^{d}+f_{d-2}(X)S^{d-2}+\cdots+f_{1}(X)S+f_{0}(X)$,
$e\geq0$, $n\geq1$, $(n,e)\neq(1,0)$, and \textit{$d,m\geq2$.
}Let $\mathcal{F}=\{\mathcal{F}_{i}\}_{i\in\mathbb{N}}$ be its unique
$\mathrm{LND}$-filtration. 

As an immediate consequence of Corollary \ref{cor:The ALi-invariant for the new class},
we have the following Corollary that shows how the computation of
the\emph{ }\textit{\emph{algebraic $k$-automorphism group of }}$R_{n,e,P}$
\textit{\emph{can be simplified by }}consider the following chain
of inclusions, realized by sending $S$ to $Y^{m}-X^{e}Z$ for the
last one. 
\[
\mathrm{ML}(R_{n,e,P})=k[x]\hookrightarrow\mathrm{\mathrm{AL}}(R_{n,e,P})=k[x,s]\hookrightarrow\mathrm{\mathrm{AL}}_{d}(R_{n,e,P})=B_{n,P}\hookrightarrow R_{n,e,P}.
\]

\begin{cor}
\label{cor:aut.restricts} Every algebraic $k$-automorphism of $R_{n,e,P}$
restricts to: 

\emph{$($1$)$} a $k$-automorphism of $\mathrm{\mathrm{AL}}_{d}(R_{n,e,P})=B_{n,P}$
\emph{$($}$\simeq R_{n,0,P}$$)$.

\emph{$($2$)$} a $k$-automorphism of $\mathrm{\mathrm{AL}}(R_{n,e,P})=k[x,s]$.

\emph{$($3$)$} a $k$-automorphism of $\mathrm{ML}(R_{n,e,P})=k[x]$.
\end{cor}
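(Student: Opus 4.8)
The plan is to read the statement off directly from the automorphism-invariance of the $\mathrm{AL}_{i}$-invariants, which was established when these invariants were introduced, combined with their explicit computation in Corollary \ref{cor:The ALi-invariant for the new class}. Since the corollary is announced as an immediate consequence, its entire content is the observation that a $k$-automorphism of $R_{n,e,P}$ must permute—in fact fix—each of the distinguished sub-algebras $k[x]$, $k[x,s]$, and $B_{n,P}$, because each of these is one of the $\mathrm{AL}_{i}$'s appearing in the chain displayed just before the statement.

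First I would recall the invariance mechanism, exactly as in the discussion following Definition \ref{Def:The-AL-iinvariant}. Fix $\alpha\in\mathrm{Aut}_{k}(R_{n,e,P})$ and a non-zero $\partial\in\mathrm{LND}(R_{n,e,P})$. The conjugate $\partial_{\alpha}:=\alpha^{-1}\partial\alpha$ is again a non-zero locally nilpotent derivation, and from $\alpha\circ\partial_{\alpha}^{\,i+1}=\partial^{\,i+1}\circ\alpha$ one gets $\alpha(\ker\partial_{\alpha}^{\,i+1})\subseteq\ker\partial^{\,i+1}$. Because $\partial\mapsto\partial_{\alpha}$ is a bijection of $\mathrm{LND}(R_{n,e,P})\setminus\{0\}$, passing to the sub-algebra generated by these kernels over all non-zero locally nilpotent derivations yields $\alpha(\mathrm{AL}_{i}(R_{n,e,P}))\subseteq\mathrm{AL}_{i}(R_{n,e,P})$ for every $i$. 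Applying the same inclusion to $\alpha^{-1}$ gives the reverse containment, so in fact $\alpha(\mathrm{AL}_{i}(R_{n,e,P}))=\mathrm{AL}_{i}(R_{n,e,P})$.

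Then I would specialize to three values of $i$ using Corollary \ref{cor:The ALi-invariant for the new class}: for $i=0$ the invariant is $\mathrm{ML}(R_{n,e,P})=k[x]$, for $i=1$ it is $\mathrm{AL}(R_{n,e,P})=k[x,s]$, and for $i=d$ it is $\mathrm{AL}_{d}(R_{n,e,P})=B_{n,P}$ (realized as the sub-algebra $k[x,s,y]$ along the displayed chain). In each of these three cases $\alpha$ maps the sub-algebra onto itself, and a bijective $k$-endomorphism of a sub-algebra is a $k$-automorphism of it; this gives (3), (2), and (1) respectively.

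The computations being trivial here, the only point deserving care—and the closest thing to an obstacle—is the passage from the inclusion $\alpha(\mathrm{AL}_{i})\subseteq\mathrm{AL}_{i}$ to an honest restriction: one must have equality, so that the restricted map is \emph{surjective} onto the sub-algebra rather than merely landing inside it, which is precisely why the argument is run symmetrically for $\alpha$ and $\alpha^{-1}$. I would also flag that the identifications $\mathrm{AL}_{d}\simeq B_{n,P}$ and $\mathrm{AL}\simeq k[x,s]$ coming from Corollary \ref{cor:The ALi-invariant for the new class} are equalities of sub-algebras of $R_{n,e,P}$ along that chain, so that ``restricts to'' is meant literally and not merely up to abstract isomorphism.
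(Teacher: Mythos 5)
Your proposal is correct and takes essentially the same route as the paper: the corollary is stated there as an immediate consequence of Corollary \ref{cor:The ALi-invariant for the new class}, i.e., the automorphism-invariance of the $\mathrm{AL}_{i}$-invariants (established by the conjugation argument $\partial_{\alpha}=\alpha^{-1}\partial\alpha$ following Definition \ref{Def:The-AL-iinvariant}) combined with the explicit identifications $\mathrm{AL}_{0}=k[x]$, $\mathrm{AL}_{1}=k[x,s]$, $\mathrm{AL}_{d}=B_{n,P}$ as sub-algebras of $R_{n,e,P}$. Your extra care in deducing equality $\alpha(\mathrm{AL}_{i})=\mathrm{AL}_{i}$ by applying the containment to $\alpha^{-1}$ as well is a correct filling-in of a detail the paper leaves implicit.
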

\textit{\emph{Nevertheless, for those who are not familiar with }}the\emph{
}\textit{\emph{algebraic $k$-automorphism group of $B_{n,P}$, we
present a complete proof for the next Theorem \ref{Theo:auto.new example }
without implicitly using the previous Corollary \ref{cor:aut.restricts}.
Let $\lambda,\mu\in k^{*}$}}\textit{,} and $a(x)\in k[x]$\textit{\emph{.
Denote }}$s=y^{m}-x^{e}z$ and\textit{\emph{ }}\textit{$W:=\frac{P(\lambda x,\mu s+x^{n+e}a(x))-\mu^{d}P(x,s)}{\lambda^{n}x^{n}}.$}
\begin{thm}
\label{Theo:auto.new example } Every algebraic $k$-automorphism
$\alpha$ of $R_{n,e,P}$ has the form: 
\[
\alpha(x,s,y,z)=(\lambda x,\,\mu s+x^{n+e}a(x),\,\frac{\mu^{d}}{\lambda^{n}}y+W,\,\frac{\mu^{dm}}{\lambda^{nm+e}}z+\frac{(\frac{\mu^{d}}{\lambda^{n}}y+W)^{m}-\frac{\mu^{dm}}{\lambda^{nm}}y^{m}+x^{n+e}a(x)}{\lambda^{e}x^{e}})
\]
where $\lambda,\mu\in k^{*}$ verify both: $\frac{\mu^{dm-1}}{\lambda^{nm}}=1$
and $f_{d-i}(\lambda x)\equiv\mu^{i}f_{d-i}(x)\mod\, x^{n+e}$ for
every $i\in\{2,\ldots,d\}$. \end{thm}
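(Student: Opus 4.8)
The plan is to exploit that $R_{n,e,P}$ is semi-rigid, so by Proposition \ref{prop:iso-preserve-filtration} (with $A=B=R_{n,e,P}$) together with the Corollary following it, every $\alpha\in\mathrm{Aut}_{k}(R_{n,e,P})$ preserves the unique $\mathrm{LND}$-filtration $\{\mathcal{F}_{i}\}_{i\in\mathbb{N}}$ and is therefore degree-preserving, $\deg_{\partial}(\alpha(b))=\deg_{\partial}(b)$ for all $b$. In particular $\alpha$ restricts to $k$-automorphisms of $\mathrm{ML}=k[x]$, of $\mathrm{AL}=k[x,s]$ and of $B_{n,P}=k[x,s,y]$ (Corollary \ref{cor:aut.restricts}), and it induces a graded automorphism $\overline{\alpha}$ of $\mathrm{Gr}(R_{n,e,P})=k[\overline{x},\overline{s},\overline{y},\overline{z}]/\langle\overline{x}^{n}\overline{y}-\overline{s}^{d},\overline{x}^{e}\overline{z}-\overline{y}^{m}\rangle$ satisfying $\overline{\alpha}(\overline{b})=\overline{\alpha(b)}$. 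The first move is to read the scalars $\lambda,\mu$ off of $\overline{\alpha}$.

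\emph{Leading forms.} Since the weight-$0$, weight-$1$ and weight-$d$ components of $\mathrm{Gr}$ are the free rank-one $k[\overline{x}]$-modules $k[\overline{x}]$, $k[\overline{x}]\overline{s}$ and $k[\overline{x}]\overline{y}$, and $\overline{\alpha}$ restricts to an automorphism of each, invertibility forces $\overline{\alpha}(\overline{x})=\lambda\overline{x}+c$ and $\overline{\alpha}(\overline{s})=\mu\overline{s}$, $\overline{\alpha}(\overline{y})=\nu\overline{y}$ with $\lambda,\mu,\nu\in k^{*}$; imposing $\overline{\alpha}(\overline{x})^{n}\overline{\alpha}(\overline{y})=\overline{\alpha}(\overline{s})^{d}$ gives $(\lambda\overline{x}+c)^{n}\nu=\mu^{d}\overline{x}^{n}$, so $c=0$. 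Lifting back through $\mathcal{F}_{0}=k[x]$ and $\mathcal{F}_{1}=k[x]s+k[x]$ (Theorem \ref{Thm:theLND-filtration}) yields $\alpha(x)=\lambda x$ and $\alpha(s)=\mu s+h(x)$ for some $h\in k[x]$. Writing $R_{n,e,P}\simeq k[x,s,y,z]/\langle x^{n}y-P(x,s),\,y^{m}-x^{e}z-s\rangle$ and using that $\alpha$ preserves the two relations, I obtain
\[
\alpha(y)=\frac{P(\lambda x,\mu s+h)}{\lambda^{n}x^{n}}=\frac{\mu^{d}}{\lambda^{n}}y+\frac{P(\lambda x,\mu s+h)-\mu^{d}P(x,s)}{\lambda^{n}x^{n}},\qquad \alpha(z)=\frac{\alpha(y)^{m}-\mu s-h}{\lambda^{e}x^{e}},
\]
where I used $P(x,s)=x^{n}y$. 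The entire content of the theorem is now the requirement that these two rational expressions genuinely lie in $R_{n,e,P}$.

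\emph{The divisibility analysis (the crux).} Set $\phi:=P(\lambda x,\mu s+h)-\mu^{d}P(x,s)\in k[x,s]$; one checks $\deg_{s}\phi\le d-1$. Viewing $R_{n,e,P}$ as the affine modification of $k[x,s]$ along $x^{nm+e}$ (Proposition \ref{prop:affine-mod}), an element of $k[x^{\pm1},s]$ of bounded $s$-degree lies in $R_{n,e,P}$ precisely when a sharp $x$-divisibility holds — this characterization is the main obstacle and the only genuinely computational point, and I expect it to rest on the $s$-degree/graded structure of $R_{n,e,P}$ inside $k[x^{\pm1},s]$. Testing $\alpha(z)$: expanding $\alpha(y)^{m}$ via $y^{m}=s+x^{e}z$ produces a term $\big(\tfrac{\mu^{dm}}{\lambda^{nm}}-\mu\big)\tfrac{s}{\lambda^{e}x^{e}}$, and since $s/x^{e}\notin R_{n,e,P}$ its coefficient must vanish, forcing $\tfrac{\mu^{dm-1}}{\lambda^{nm}}=1$; the surviving terms, after cancelling $P^{m-1}/x^{n(m-1)}=y^{m-1}$, then require $x^{n+e}\mid\phi$. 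Reading $x^{n+e}\mid\phi$ coefficient-by-coefficient in $s$ gives, from the $s^{d-1}$-coefficient $d\mu^{d-1}h$, that $x^{n+e}\mid h$, i.e. $h=x^{n+e}a(x)$, and from the $s^{d-i}$-coefficients for $i\ge2$ the congruences $f_{d-i}(\lambda x)\equiv\mu^{i}f_{d-i}(x)\bmod x^{n+e}$. Substituting $h=x^{n+e}a(x)$ rewrites $\alpha(y)$ as $\tfrac{\mu^{d}}{\lambda^{n}}y+W$ and $\alpha(z)$ into the stated form, so every automorphism has the asserted shape.

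\emph{Converse and bijectivity.} Conversely, for any $\lambda,\mu\in k^{*}$ meeting the two conditions and any $a\in k[x]$, the same divisibility computation shows the displayed $\alpha$ is a well-defined $k$-endomorphism of $R_{n,e,P}$; its inverse is obtained by replacing $(\lambda,\mu)$ with $(\lambda^{-1},\mu^{-1})$ and an adapted $a'$, and both conditions are stable under this replacement (for instance $\tfrac{(\mu^{-1})^{dm-1}}{(\lambda^{-1})^{nm}}=\tfrac{\lambda^{nm}}{\mu^{dm-1}}=1$), whence $\alpha\in\mathrm{Aut}_{k}(R_{n,e,P})$. This exhibits $\mathrm{Aut}_{k}(R_{n,e,P})$ as exactly the family described.
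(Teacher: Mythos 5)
Your reduction of the problem is sound, and your route to $\alpha(x)=\lambda x$ is genuinely different from the paper's and arguably cleaner: you read $c=0$ off the graded relation $\overline{x}^{n}\overline{y}=\overline{s}^{d}$ applied to the graded automorphism $\overline{\alpha}$, whereas the paper conjugates $\partial$ by $\alpha$, invokes $\mathrm{LND}(R_{n,e,P})=k[x]\partial$ (Corollary \ref{Coro:semi-rigidity- of-the-general-case}), and derives $c=0$ from the identity $\mu x^{n+e}=f(\alpha(x))(\lambda x+c)^{n+e}$. Up to the observation that the whole theorem amounts to the two rational expressions for $\alpha(y)$ and $\alpha(z)$ lying in $R_{n,e,P}$, your argument and the paper's agree in substance.

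The gap is exactly at the point you yourself flag as ``the crux'' and then do not prove. First, the membership criterion you invoke --- an element of $k[x^{\pm1},s]$ of bounded $s$-degree lies in $R_{n,e,P}$ iff a sharp $x$-divisibility holds --- is left vague, admittedly conjectural (``I expect it to rest on\ldots''), and is in fact \emph{false} in the range needed for the $\alpha(z)$ analysis: $y=P(x,s)/x^{n}$ lies in $R_{n,e,P}$ and has $s$-degree $d$, yet $x^{n}\nmid P(x,s)$ in $k[x,s]$. Divisibility in $k[x,s]$ characterizes membership only for $s$-degree $\le d-1$, because $\mathcal{F}_{d-1}=k[x]s^{d-1}+\cdots+k[x]$ by Theorem \ref{Thm:theLND-filtration}; the element $G$ whose divisibility by $x^{e}$ you must test has $s$-degree up to $md-1$, where membership means divisibility of \emph{all} coefficients in the $k[x]$-basis $\{s^{l}y^{j}\}_{l<d,\,j<m}$. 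Second, and as a consequence, the inference ``the expansion produces a term $(\frac{\mu^{dm}}{\lambda^{nm}}-\mu)\frac{s}{\lambda^{e}x^{e}}$, and since $s/x^{e}\notin R_{n,e,P}$ its coefficient must vanish'' is a non sequitur: the terms $\binom{m}{i}(\frac{\mu^{d}}{\lambda^{n}}y)^{m-i}W^{i}$ with $i\ge2$ have $s$-degree $\ge d$, and after reduction by $s^{d}=x^{n}y-f_{d-2}s^{d-2}-\cdots-f_{0}$ they also contribute to the coefficient of the basis element $s$, so no single term's coefficient can be isolated and killed on its own; the same objection applies to ``the surviving terms \ldots{} then require $x^{n+e}\mid\phi$''. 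Controlling these cross-term contributions is precisely what the paper's proof does and yours omits: the paper applies $\mathrm{gr}_{\mathcal{F}}$ to $G$, computes its leading form as a nonzero scalar times $\overline{y}^{m-1}\overline{s}^{d-1}a(\overline{x})$, deduces $x^{e}\mid a(x)$ from divisibility inside the graded piece $R_{[dm-1]}=k[\overline{x}]\overline{s}^{d-1}\overline{y}^{m-1}$, and only then extracts $\frac{\mu^{dm}}{\lambda^{nm}}=\mu$ and the congruences modulo $x^{n+e}$. Since those two conditions on $(\lambda,\mu)$ are the entire content of the theorem, this missing argument is a genuine gap rather than a routine verification (and your converse paragraph inherits it, since it relies on ``the same divisibility computation'').
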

\begin{proof}
By Proposition \ref{prop:iso-preserve-filtration}, $\alpha$ preserve
the unique $\mathrm{LND}$-filtration of $B$, described in Theorem
\ref{Thm:theLND-filtration}.

Thus we must have $\alpha(x)\in\mathcal{F}_{0}=k[x]$, $\alpha(s)\in\mathcal{F}_{1}=k[x]s+k[x]$,
$\alpha(y)\in\mathcal{F}_{d}=k[x]y+\mathcal{F}_{d-1}$ and $\alpha(z)\in\mathcal{F}_{md}=k[x].z+\mathcal{F}_{md-1}$.
In addition, $\alpha$ restricts to an automorphism of $\mathcal{F}_{0}=k[x]$.
Therefore, $\alpha(x)=\lambda x+c$ where $\lambda\in k^{*}$, and
$c\in k$.

Since $\alpha$ is invertible we get $\alpha(s)=\mu s+b(x)$, $\alpha(y)=\varepsilon y+h(x,s)$,
and $\alpha(z)=\xi z+g(x,s,y)$ for some $\mu,\varepsilon,\xi\in k^{*}$,
$b(x)\in k[x]$, $h(x,s)\in k[x,s]$, and $g(x,s,y)\in k[x,s,y]$. 

By Corollary \ref{Coro:semi-rigidity- of-the-general-case} (2) every
$D\in\mathrm{LND}(R_{n,e,P})$ has the form $D=f(x)\partial$. In
particular, $\partial_{\alpha}:=\alpha^{-1}\partial\alpha=f(x)\partial$
for some $f(x)\in k[x]$. Since $\alpha\partial_{\alpha}=\partial\alpha$
we have $\partial(\alpha(s))=\alpha(f(x)\partial(s))=f(\alpha(x))\alpha(x^{n+e})$
where ($\partial(s)=x^{n+e}$). So we get $\partial(\mu s+b(x))=f(\alpha(x))\left(\lambda x+c\right)^{n+e}$.
Since $\partial(\mu y+b(x))=\mu x^{n+e}$, $x$ divides $(\lambda x+c)^{n+e}$
in $k[x]$, and this is possible only if $c=0$, so we finally get
$\alpha(x)=\lambda x$.

Applying $\alpha$ to the relation $x^{n}y=P(x,s)$ in $R_{n,e,P}$,
we get $\lambda^{n}x^{n}\alpha(y)=P(\lambda x,\mu s+b(x))=\mu^{d}P(x,s)+d\mu^{d-1}s^{d-1}b(x)+H(x,s)$
where $\deg_{s}H\leq d-2$. Since $x^{n}$ divides both $x^{n}y$
and $P(x,s)$, and $\deg_{s}H\leq m-2$, we conclude that $x^{n}$
divides $d\mu^{d-1}s^{d-1}b(x)+H(x,s)$ in $k[x,s]$. So $x^{n}$
divides $b(x)$, that is $\alpha(s)=\mu s+x^{n}a(x)$. 

In addition, $x^{n}$ divides every coefficient of $H$ as a polynomial
in $s$, so $x^{n}$ divides $-\mu^{d}f_{d-i}(x)+\mu^{d-i}f_{d-i}(\lambda x)$
because coefficients of $H(s)$ are of the form $q(x,s)b(x)-\mu^{d}f_{d-i}(x)+\mu^{d-i}f_{d-i}(\lambda x)$.
Since $x^{n}$ divides $b(x)$, it divides also $-\mu^{i}f_{d-i}(x)+f_{d-i}(\lambda x)$
for every $i$.

Now $\alpha(x)$ and $\alpha(s)$ fully determine $\alpha(y)$: 
\[
\alpha(y)=\frac{\mu^{d}}{\lambda^{n}}y+\frac{P(\lambda x,\mu s+x^{n}a(x))-\mu^{d}P(x,s)}{\lambda^{n}x^{n}}.
\]

Apply $\alpha$ to $x^{e}z=y^{m}-s$ to get $\lambda^{e}x^{e}\alpha(z)=(\frac{\mu^{h}}{\lambda^{n}}y+W)^{m}-\mu s-x^{n}a(x)$
where $W=\frac{P(\lambda x,\mu s+x^{n}a(x))-\mu^{n}P(x,s)}{\lambda^{n}x^{n}}\in k[x,s]$.
So we have $\lambda^{e}x^{e}\alpha(z)=[\frac{\mu^{nm}}{\lambda^{nm}}y^{m}-\frac{\mu^{nm}}{\lambda^{nm}}s]+(\frac{\mu^{nm}}{\lambda^{nm}}-\mu)s+m(\frac{\mu^{n}}{\lambda^{n}}y)^{m-1}W+\ldots+W^{m}-x^{n}a(x)$.
Since $\frac{\mu^{nm}}{\lambda^{nm}}y^{m}-\frac{\mu^{nm}}{\lambda^{nm}}s=\frac{\mu^{nm}}{\lambda^{nm}}x^{e}z$,
we see that $x^{e}$ divides $G:=(\frac{\mu^{nm}}{\lambda^{nm}}-\mu)s+m(\frac{\mu^{n}}{\lambda^{n}}y)^{m-1}W+\ldots+W^{m}-x^{n}a(x)$
in $k[x,s,y]\subset R_{n,e,P}$ because $\deg_{s}G\leq\deg_{s}(z)-1=md-1$. 

Note that $W=\frac{d\mu^{d-1}s^{d-1}x^{n}a(x)+H(x,s)}{\lambda^{n}x^{n}}$,
thus $\deg_{s}W<d$. So by applying the map $\mathrm{gr}_{\mathcal{F}}$
we get
\[
\overline{G}=\overline{m(\frac{\mu^{n}}{\lambda^{n}}y)^{m-1}W}=\overline{m(\frac{\mu^{n}}{\lambda^{n}}y)^{m-1}}\,\overline{d\frac{\mu^{d-1}}{\lambda^{n}}s^{d-1}a(x)}.
\]

Since $x^{e}$ divides $G$, $x^{e}$ divides $a(x)$. Thus $x^{e}$
divides every coefficients of $G$ as a polynomial in $y$. So $x^{e}$
divides $W$ and $\frac{\mu^{nm}}{\lambda^{nm}}-\mu=0$. This means
that $x^{n+e}$ divides $f_{d-i}(\lambda x)-\mu^{i}f_{d-i}(x)$ for
all $i\in\{2,\ldots,d\}$, and $\frac{\mu^{nm}}{\lambda^{nm}}=\mu$.
Finally, by the relation \textit{$s=y^{m}-x^{e}z$,} we get $\alpha(z)=\frac{\mu^{nm}}{\lambda^{nm+e}}z+\frac{m(\frac{\mu^{n}}{\lambda^{n}}y)^{m-1}W+\ldots+W^{m}+x^{n}a(x)}{\lambda^{e}x^{e}}$,
and we are done.
\end{proof}
The next Corollary describes the algebraic $k$-automorphism group
of $R_{n,e,P}$ in terms of the algebraic $k$-automorphism group
of the $\mathrm{\mathrm{AL}}$-invariant. Denote by $\mathcal{A}_{1}$
the sub-group of $\mathrm{\mathrm{Aut}}_{k}\left(\mathrm{\mathrm{AL}}(R_{n,e,P})\right)=\mathrm{\mathrm{Aut}}_{k}\left(k[X,S]\right)$
of automorphisms which preserve the ideals $\langle X\rangle$ and
{\small $I_{1}=\langle X^{nm+e},X^{n(m-1)+e}P(X,S),F(X,S)\rangle$}.
Also, denote by $\mathcal{A}_{2}$ the sub-group of $\mathrm{\mathrm{Aut}}_{k}\left(\mathrm{\mathrm{AL}}(R_{n,e,P})\right)$
of automorphisms which preserve the ideals $\langle X\rangle$ and
$I_{2}=\langle X^{n},P(X,S)\rangle$. Finally, denote by $\mathcal{A}_{3}$
the sub-group of $\mathrm{\mathrm{Aut}}_{k}\left(B_{n,P}\right)$
of automorphisms which preserve the ideals $\langle X\rangle$ and
$I_{3}=\langle X^{e},Q(X,Y)-S\rangle$. Then,
\begin{cor}
In the case $e\neq0$, $\mathrm{\mathrm{Aut}}_{k}(R_{n,e,P})\cong\mathcal{A}_{1}=\mathcal{A}_{3}$.
In the case $n\neq1$, $\mathrm{\mathrm{Aut}}_{k}\left(B_{n,P}\right)\cong\mathcal{A}_{2}$.
The isomorphism of $\mathrm{\mathrm{Aut}}_{k}(R_{n,e,P})$ to $\mathcal{A}_{1}=\mathcal{A}_{3}$
is induced by restriction of any automorphism of $R_{n,e,P}$ to the
\textup{\emph{$\mathrm{\mathrm{AL}}$-invariant.}}\end{cor}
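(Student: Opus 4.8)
The plan is to show that all three isomorphisms are induced by restricting automorphisms to the $\mathrm{AL}$-invariant, exploiting the affine-modification presentations recorded in Proposition~\ref{prop:affine-mod}. First I would set up the restriction map. By Corollary~\ref{cor:aut.restricts} every $\alpha\in\mathrm{Aut}_{k}(R_{n,e,P})$ preserves $\mathrm{AL}(R_{n,e,P})=k[x,s]\simeq k[X,S]$, so $\rho\colon\alpha\mapsto\alpha|_{k[x,s]}$ is a well-defined group homomorphism into $\mathrm{Aut}_{k}(k[X,S])$. Since $\alpha(x)=\lambda x$ for some $\lambda\in k^{*}$ by Theorem~\ref{Theo:auto.new example}, the automorphism $\rho(\alpha)$ fixes $\langle X\rangle$ and scales $f:=X^{nm+e}$ by a unit. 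To see that it also fixes the center $I_{1}$, I would use the presentation $R_{n,e,P}=k[X,S][I_{1}/f]$ of Proposition~\ref{prop:affine-mod}(1), which lets one recover $I_{1}$ intrinsically as $f\,R_{n,e,P}\cap k[X,S]$; since $\alpha$ preserves $R_{n,e,P}$, $k[X,S]$ and $fR_{n,e,P}$, it preserves their intersection, whence $\rho(\alpha)(I_{1})=I_{1}$ and $\mathrm{im}(\rho)\subseteq\mathcal{A}_{1}$.

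Next I would prove $\rho$ is a bijection onto $\mathcal{A}_{1}$. Injectivity is forced by the defining relations: applying $\alpha$ to $x^{n}y=P(x,s)$ gives $\alpha(y)=\rho(\alpha)(P)/(\lambda^{n}x^{n})$, and the relation $s=y^{m}-x^{e}z$ then determines $\alpha(z)$, so $\alpha$ is completely determined by $\rho(\alpha)$; equivalently, the explicit formula in Theorem~\ref{Theo:auto.new example} depends only on $\lambda,\mu$ and $a(x)$, all read off from $\alpha|_{k[x,s]}$. For surjectivity I would invoke the universal property of affine modifications \cite[Proposition~1.1]{Sh. Kaliman and M. Zaidenberg}: given $\beta\in\mathcal{A}_{1}$, each generator $g/f$ of $R_{n,e,P}$ over $k[X,S]$ (with $g\in I_{1}$) is sent to $\beta(g)/\beta(f)\in k[X,S][I_{1}/f]=R_{n,e,P}$, because $\beta(I_{1})=I_{1}$ and $\beta(f)\in k^{*}f$; applying the same to $\beta^{-1}$ produces a two-sided inverse, so $\beta$ lifts to an automorphism $\tilde\beta$ with $\rho(\tilde\beta)=\beta$. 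This establishes $\mathrm{Aut}_{k}(R_{n,e,P})\cong\mathcal{A}_{1}$.

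I would then derive $\mathcal{A}_{1}=\mathcal{A}_{3}$ by running the same argument through the presentation $R_{n,e,P}=B_{n,P}[I_{3}/X^{e}]$ of Proposition~\ref{prop:affine-mod}(3): by Corollary~\ref{cor:aut.restricts}(1) automorphisms of $R_{n,e,P}$ restrict to $B_{n,P}$, and the universal property gives $\mathrm{Aut}_{k}(R_{n,e,P})\cong\mathcal{A}_{3}$ via restriction to $B_{n,P}$. Since $B_{n,P}=k[X,S][I_{2}/X^{n}]$ is itself a modification of $\mathrm{AL}(R_{n,e,P})$, restricting an element of $\mathcal{A}_{3}$ further from $B_{n,P}$ to $k[X,S]$ agrees with $\rho$, so the two isomorphisms out of $\mathrm{Aut}_{k}(R_{n,e,P})$ identify $\mathcal{A}_{3}$ with $\mathcal{A}_{1}$. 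Finally, when $n\neq1$ the ring $B_{n,P}=R_{n,0,P}$ is itself semi-rigid by Corollary~\ref{Coro:semi-rigidity- of-the-general-case} with $\mathrm{AL}(B_{n,P})=k[X,S]$, and applying the first two paragraphs verbatim to the presentation $B_{n,P}=k[X,S][I_{2}/X^{n}]$ yields $\mathrm{Aut}_{k}(B_{n,P})\cong\mathcal{A}_{2}$.

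I expect the main obstacle to be the inclusion $\mathrm{im}(\rho)\subseteq\mathcal{A}_{1}$, that is, showing the restricted automorphism preserves the center $I_{1}$ and not merely the divisor $\{f=0\}$. This rests on the intrinsic description $f\,R_{n,e,P}\cap k[X,S]=I_{1}$, equivalently on $I_{1}$ being $f$-saturated, which I would verify from the explicit generators of $I_{1}$. Should this be delicate, I would bypass it explicitly: Theorem~\ref{Theo:auto.new example} shows $\rho(\alpha)$ has the form $X\mapsto\lambda X$, $S\mapsto\mu S+X^{n+e}a(X)$ with $\mu^{dm-1}=\lambda^{nm}$ and $f_{d-i}(\lambda X)\equiv\mu^{i}f_{d-i}(X)\bmod X^{n+e}$ for $i\in\{2,\ldots,d\}$, and I would check by a direct ideal-membership computation that these are exactly the conditions for an automorphism of $k[X,S]$ to preserve $\langle X\rangle$ and $I_{1}$, giving $\mathrm{im}(\rho)=\mathcal{A}_{1}$ on the nose.
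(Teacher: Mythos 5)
You follow the paper's own strategy---restrict automorphisms to $\mathrm{AL}(R_{n,e,P})=k[X,S]$, then use Kaliman--Zaidenberg extension for affine modifications (via Proposition \ref{prop:affine-mod}) to go back---and your injectivity and surjectivity bookkeeping agrees in substance with the paper's ``extends in a unique way''. The genuine gap is the step you yourself flag as the main obstacle, $\mathrm{im}(\rho)\subseteq\mathcal{A}_{1}$, and neither of your two routes closes it once $m\geq3$. The intrinsic description $I_{1}=fR_{n,e,P}\cap k[X,S]$, $f=X^{nm+e}$, is false in general: the element $f\,y^{2}=X^{n(m-2)+e}P^{2}$ lies in $fR_{n,e,P}\cap k[X,S]$ but not in $I_{1}$ when $m\geq3$. (If $X^{n(m-2)+e}P^{2}=aX^{nm+e}+bX^{n(m-1)+e}P+cF$, then reducing modulo $X$ forces $X\mid c$ because $F\equiv P(0,S)^{m}$; iterating this and dividing by $X$ a total of $n(m-2)+e$ times leaves $P(0,S)^{2}=c'(0,S)P(0,S)^{m}$ in $k[S]$, impossible since $P(0,S)$ is monic of degree $d\geq2$ and $m\geq 3$.) So $I_{1}$ is strictly smaller than its saturation, and the formal ``preserved intersection'' argument proves invariance of the saturation, not of $I_{1}$.

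The fallback is worse off, because the inclusion you would be checking is simply not true for $m\geq3$: take $n=e=1$, $d=2$, $m=3$, $P=S^{2}+1$, so $f=X^{4}$, $F=P^{3}-X^{3}S$ (the polynomial with $z=F/X^{4}$; the computation is identical with $F=P^{3}$) and $I_{1}=\langle X^{4},X^{3}P,F\rangle$. The assignment $\alpha(x)=x$, $\alpha(s)=s+x^{2}$ satisfies the constraints of Theorem \ref{Theo:auto.new example } (take $\lambda=\mu=1$, $a=1$) and is an automorphism of $R_{1,1,P,Y^{3}}$, yet its restriction $\beta\colon X\mapsto X$, $S\mapsto S+X^{2}$ gives $\beta(F)\equiv6SP^{2}X^{2}\pmod{I_{1}}$, and $SP^{2}X^{2}\notin I_{1}$ by the same reduction modulo $X$ as above; hence $\beta\notin\mathcal{A}_{1}$. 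Note that this is exactly the step the paper's proof asserts without computation (``Theorem \ref{Theo:auto.new example } implies \ldots{} preserves \ldots{} $I_{1}$''), so the flaw you inherited is the paper's as well; both the statement and your first argument become correct if $I_{1}$ is replaced by the saturated center $fR_{n,e,P}\cap k[X,S]$, for which invariance under restriction is formal and the Kaliman--Zaidenberg extension still applies, since $k[X,S]\bigl[(fR_{n,e,P}\cap k[X,S])/f\bigr]=R_{n,e,P}$. For $m=2$ the direct ideal-membership check does succeed, as it does for the $(B_{n,P},I_{2})$ part of the statement.
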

\begin{proof}
Theorem \ref{Theo:auto.new example } implies that every algebraic
$k$-automorphism of $R_{n,e,P}$ restricts to an algebraic $k$-automorphism
of $\mathrm{\mathrm{AL}}(R_{n,e,P})=k[X,S]$ that preserves the ideals
$\langle X\rangle$ and $I_{1}$ (resp. every algebraic $k$-automorphism
of $B_{n,P}$ restricts to an algebraic $k$-automorphism of $\mathrm{\mathrm{AL}}(R_{n,e,P})=k[X,S]$
that preserves the ideals $\langle X\rangle$ and $I_{2}$). Finally,
since $R_{n,e,P}$ is the affine modification of the $\mathrm{\mathrm{AL}}$-invariant
along $X^{nm+e}$ with center $I_{1}$ (resp. $B_{n,P}$ is the affine
modification of the $\mathrm{\mathrm{AL}}$-invariant along $X^{n}$
with center $I_{2}$), see Proposition \ref{prop:affine-mod}, every
algebraic $k$-automorphism of $\mathrm{\mathrm{AL}}(R_{n,e,P})$
that preserves the ideals $\langle X\rangle$ and $I_{1}$ (resp.
preserves the ideals $\langle X\rangle$ and $I_{2}$) extends in
a unique way to an algebraic $k$-automorphism of the affine modification
$R_{n,e,P}$ (resp. $B_{n,P}$), see \cite[Corollary 2.2]{Sh. Kaliman and M. Zaidenberg}.
\end{proof}

\subsection{Isomorphism class for the new family \label{sub:Isomorphisms-class}}

\indent\newline\noindent  Again, we only deal with the case where
$Q(X,Y)=Y^{m}$. First, in Proposition \ref{prop:iso non-tivial case}
we deal with the non-trivial case of $R_{n,e,P}$ where ($e\neq0$).
We deliberately exclude the trivial case $e\neq0$, which correspond
to Danielewski $k$-domains of the form $B_{n,P}$ ($\simeq R_{n,0,P,m}$).
Then, in Proposition \ref{prop:not iso trivial case} we compare the
non-trivial case of $R_{n,e,P}$ ($e\neq0$) with the trivial case
$B_{n,P}$ ($\simeq R_{n,0,P,m}$). The reason for doing that is to
elaborate the importance of the non-trivial case $R_{n,e,P}$ where
($e\neq0$), that is, they are not isomorphic to any of Danielewski
$k$-domains.

\subsubsection{\emph{The case} \emph{$R_{n,e,P}$ where $($$e\neq0$$)$}}

Let $R_{n,e,P}$ be the ring defined as 
\[
R_{n,e,P}:=k[X,Y,Z]/\left\langle X^{n}Y-P(X,Y^{m}-X^{e}Z)\right\rangle 
\]
 where $n,e\geq1$, $P(X,S)=S^{d}+f_{d-2}(X)S^{d-2}+\cdots+f_{1}(X)S+f_{0}(X)$,
and $d,m>1$. 

We give necessary and sufficient conditions for two rings, of the
form $R_{n,e,P}$, to be isomorphic. Let $P_{1}(X,S)=S^{d_{1}}+f_{d-2}(X)S^{d_{1}-2}+\cdots+f_{1}(X)S+f_{0}(X)$
and $P_{2}(X,S)=S^{d_{2}}+g_{d-2}(X)S^{d_{2}-2}+\cdots+g_{1}(X)S+g_{0}(X)$.
Then we have the following 
\begin{prop}
\label{prop:iso non-tivial case} $R_{n_{1},e_{1},P_{1},m}$ $\simeq$
$R_{n_{2},e_{2},P_{2},m}$ if an only if $n=n_{1}=n_{2}$, $e=e_{1}=e_{2}$,
$d=d_{1}=d_{2}$, and there exist $\lambda,\mu\in k^{*}$ such that
$f_{d-i}(\lambda X)\equiv\mu^{i}g_{d-i}(X)\mod\, X^{n+e}$, and $\frac{\mu^{dm}}{\lambda^{nm}}=\mu$
for every $i\in\{2,\ldots,d\}$.\end{prop}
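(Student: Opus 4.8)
The plan is to prove the two implications separately: the forward direction rests on the invariance of the unique $\mathrm{LND}$-filtration and of the family $\{\mathrm{AL}_i\}$, while the converse is an explicit construction modeled on Theorem \ref{Theo:auto.new example}. Throughout I write $x_t,s_t,y_t,z_t$ ($t=1,2$) for the distinguished elements of $R_{n_t,e_t,P_t,m}$ and $\partial_t$ for the canonical irreducible derivation. Suppose first that $\Psi\colon R_{n_1,e_1,P_1,m}\to R_{n_2,e_2,P_2,m}$ is a $k$-isomorphism. Since any isomorphism carries $\mathrm{AL}_i$ onto $\mathrm{AL}_i$, Corollary \ref{cor:The ALi-invariant for the new class} shows that the two indices at which the chain $\mathrm{AL}_0\subsetneq\mathrm{AL}_1\subseteq\cdots$ strictly grows, namely $d$ and $md$, are preserved; as $m$ is common this forces $d_1=d_2=:d$. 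Next I would fix the linear part: $\Psi$ sends $\mathrm{ML}=k[x_1]$ to $k[x_2]$ and, by almost-rigidity (Corollary \ref{Coro:semi-rigidity- of-the-general-case} and Proposition \ref{Pro:semi-rigid one dimensional}), the irreducible derivation to the irreducible derivation up to a unit, whence $\Psi(\mathrm{pl}(\partial_1))=\mathrm{pl}(\partial_2)$. Writing $\Psi(x_1)=\lambda x_2+c$ and using $\mathrm{pl}(\partial_t)=\langle x_t^{n_t+e_t}\rangle$, this reads $\langle(\lambda x_2+c)^{n_1+e_1}\rangle=\langle x_2^{n_2+e_2}\rangle$ in $k[x_2]$, so unique factorization gives $c=0$ and $n_1+e_1=n_2+e_2$.

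By Proposition \ref{prop:iso-preserve-filtration}, $\Psi$ respects the unique $\mathrm{LND}$-filtrations and therefore induces a graded isomorphism $\overline{\Psi}\colon\mathrm{Gr}(R_{n_1,e_1,P_1})\to\mathrm{Gr}(R_{n_2,e_2,P_2})$. Writing $\Psi(s_1)=\mu s_2+b(x_2)$, $\Psi(y_1)=\varepsilon y_2+h(x_2,s_2)$ and $\Psi(z_1)=\xi z_2+g(x_2,s_2,y_2)$ with $\mu,\varepsilon,\xi\in k^{*}$ and lower-degree tails, passing to leading terms yields $\overline{\Psi}(\overline{x}_1)=\lambda\overline{x}_2$, $\overline{\Psi}(\overline{s}_1)=\mu\overline{s}_2$, $\overline{\Psi}(\overline{y}_1)=\varepsilon\overline{y}_2$, $\overline{\Psi}(\overline{z}_1)=\xi\overline{z}_2$. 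I would then apply $\overline{\Psi}$ to the two graded relations $\overline{x}^{\,n_1}\overline{y}=\overline{s}^{\,d}$ and $\overline{x}^{\,e_1}\overline{z}=\overline{y}^{\,m}$ of $\mathrm{Gr}(R_{n_1,e_1,P_1})$ from Theorem \ref{Thm:theLND-filtration}. Since $\{\overline{x}^{\,a}\overline{s}^{\,l}\overline{y}^{\,j}\overline{z}^{\,i}:a,i\ge0,\,0\le l\le d-1,\,0\le j\le m-1\}$ is a $k$-basis of $\mathrm{Gr}(R_{n_2,e_2,P_2})$, the equalities force the monomials $\overline{x}_2^{\,n_1}\overline{y}_2$ and $\overline{x}_2^{\,n_2}\overline{y}_2$ to coincide, and likewise $\overline{x}_2^{\,e_1}\overline{z}_2$ and $\overline{x}_2^{\,e_2}\overline{z}_2$; hence $n_1=n_2=:n$ and $e_1=e_2=:e$, and comparing scalars gives $\varepsilon=\mu^{d}/\lambda^{n}$ and $\xi=\mu^{dm}/\lambda^{nm+e}$.

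It remains to recover the arithmetic conditions on the coefficients, and this is the technical heart of the argument; I would carry it out exactly as in the proof of Theorem \ref{Theo:auto.new example}, now comparing two rings rather than one. Applying $\Psi$ to $x_1^{n}y_1=P_1(x_1,s_1)$ and substituting $s_2^{d}=x_2^{n}y_2-\sum_{i\ge2}g_{d-i}(x_2)s_2^{d-i}$ shows that $\Psi(y_1)$ lies in the ring only if $x_2^{n}$ divides $f_{d-i}(\lambda x_2)-\mu^{i}g_{d-i}(x_2)$ for every $i$. Applying $\Psi$ to $x_1^{e}z_1=y_1^{m}-s_1$ and inspecting, via $\mathrm{gr}$, the coefficient of $\overline{s}$ forces $\mu^{dm}/\lambda^{nm}=\mu$, while the $x_2^{e}$-divisibility of the remaining terms upgrades the previous congruence to $f_{d-i}(\lambda x_2)\equiv\mu^{i}g_{d-i}(x_2)\bmod x_2^{\,n+e}$. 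The delicate point, which I expect to be the main obstacle, is precisely this sharpening of the divisibility from $x^{n}$ to $x^{n+e}$: it cannot be read off either defining relation in isolation and requires combining the information extracted from both.

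For the converse, assume $n_1=n_2=n$, $e_1=e_2=e$, $d_1=d_2=d$, and $\lambda,\mu\in k^{*}$ satisfy the two stated conditions. I would define a $k$-algebra map $\Psi$ out of $R_{n,e,P_1}=k[X,Y,Z]/\langle X^{n}Y-P_1(X,Y^{m}-X^{e}Z)\rangle$ by $\Psi(X)=\lambda x_2$, $\Psi(Y)=\lambda^{-n}x_2^{-n}P_1(\lambda x_2,\mu s_2)$ and $\Psi(Z)=\lambda^{-e}x_2^{-e}\bigl(\Psi(Y)^{m}-\mu s_2\bigr)$, the last two being a priori elements of the fraction field of $R_{n,e,P_2}$. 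The congruences $x_2^{\,n+e}\mid f_{d-i}(\lambda x_2)-\mu^{i}g_{d-i}(x_2)$ guarantee, after substituting $s_2^{d}=x_2^{n}y_2-\sum_{i\ge2}g_{d-i}(x_2)s_2^{d-i}$, first that $\Psi(Y)\in k[x_2,s_2,y_2]$, and then, together with $\mu^{dm}/\lambda^{nm}=\mu$, that $\Psi(Z)\in R_{n,e,P_2}$. By construction $\Psi(Y)^{m}-\Psi(X)^{e}\Psi(Z)=\mu s_2$ and $\Psi(X)^{n}\Psi(Y)=P_1(\lambda x_2,\mu s_2)$, so $\Psi$ annihilates the defining polynomial and is a well-defined homomorphism. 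Finally, I would check invertibility by observing that $(\lambda^{-1},\mu^{-1})$ satisfies the symmetric conditions with $f$ and $g$ interchanged — substitute $X\mapsto\lambda^{-1}X$ in the congruences and take reciprocals in $\mu^{dm}/\lambda^{nm}=\mu$ — so that the analogous map $R_{n,e,P_2}\to R_{n,e,P_1}$ is a two-sided inverse, completing the proof.
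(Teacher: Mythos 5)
Your proposal is correct, and it proves strictly more than the paper's own proof while following the same broad skeleton for the hard direction. Where you and the paper coincide is the core mechanism: both use Proposition \ref{prop:iso-preserve-filtration} to force $\Psi$ to respect the unique $\mathrm{LND}$-filtrations, and both extract the congruences $f_{d-i}(\lambda X)\equiv\mu^{i}g_{d-i}(X)\bmod X^{n+e}$ and $\mu^{dm}/\lambda^{nm}=\mu$ by the coefficient comparison of Theorem \ref{Theo:auto.new example }. The differences are real, though. First, the paper obtains $n_{1}=n_{2}$, $d_{1}=d_{2}$ from the restriction of $\Psi$ to the chain $k[x]\hookrightarrow k[x,s]\hookrightarrow B_{n,P}$ and then gets $e_{1}=e_{2}$ separately by comparing top homogeneous components of the single relation $x^{e}z=y^{m}-s$; you instead pass to the induced graded isomorphism $\overline{\Psi}$ and apply it to \emph{both} graded relations of Theorem \ref{Thm:theLND-filtration}, which yields $n_{1}=n_{2}$, $e_{1}=e_{2}$ and the scalar identities $\varepsilon=\mu^{d}/\lambda^{n}$, $\xi=\mu^{dm}/\lambda^{nm+e}$ in one uniform stroke. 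Second, your plinth-ideal argument ($\mathrm{pl}(\partial)=\langle x^{n+e}\rangle$ is carried onto $\mathrm{pl}(\partial)$ by irreducibility plus almost-rigidity, forcing $c=0$ and $n_{1}+e_{1}=n_{2}+e_{2}$) has no counterpart in the paper and cleanly pins down the linear part at the outset. Third, and most substantively, you avoid the paper's device of the ``automorphism $\psi$ of $R_{n_{2},e_{2},P_{2},m}$ induced by $\Psi$'', which is logically murky (an isomorphism between two a priori different rings does not canonically induce an automorphism of the target without presupposing an identification) and which leads the paper to display congruences involving $g_{d-i}$ alone rather than the mixed $f$--$g$ congruences actually claimed in the statement; redoing the computation of Theorem \ref{Theo:auto.new example } directly for $\Psi$, with $P_{1}$ on the source side and the relations of $R_{n_{2},e_{2},P_{2},m}$ on the target side, is the honest version, and you correctly identify the one delicate point (upgrading $x^{n}\mid f_{d-i}(\lambda x)-\mu^{i}g_{d-i}(x)$ to $x^{n+e}$ by combining the divisibility of $W$ by $x^{e}$ coming from the second relation). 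Finally, the paper's proof stops after necessity, whereas you also construct the sufficiency half explicitly — $\Psi(Y)=\lambda^{-n}x_{2}^{-n}P_{1}(\lambda x_{2},\mu s_{2})$, $\Psi(Z)=\lambda^{-e}x_{2}^{-e}(\Psi(Y)^{m}-\mu s_{2})$, with the inverse given by the symmetric data $(\lambda^{-1},\mu^{-1})$ — so your write-up actually completes the ``if and only if''.
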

\begin{proof}
Let $x_{i},s_{i},y_{i},z_{i}$ be the class of $X,\, S=Y^{m}-X^{e_{i}}Z,\,\, Y$,
and $Z$ in $R_{n_{i},e_{i},P_{i},m}$ for $i\in\{1,2\}$. Let $\Psi:R_{n_{1},e_{1},P_{1},m}\longrightarrow R_{n_{2},e_{2},P_{2},m}$
be an isomorphism between the two semi-rigid rings. Then it induces
$\psi$ an automorphism of $R_{n_{2},e_{2},P_{2},m}$, which restricts
by Corollary \ref{cor:aut.restricts} to an automorphism of 
\[
k[x_{2},s_{2},y_{2}]=B_{n_{2},P_{2}}:=k[X,S,Y]/\left\langle X^{n_{2}}Y-P_{2}(X,S)\right\rangle \subset R_{n_{2},e_{2},P_{2},m}.
\]
Also, $\psi$ restricts to an automorphism of $k[x_{2},s_{2}]$ (resp.
$k[x_{2}]$).

Proposition \ref{prop:iso-preserve-filtration} shows that $\Psi$
respects the semi-rigid structure, that is, $\Psi\left(\mathcal{F}_{j}\right)=\mathcal{G}_{j}$
for every $j$, where \textit{$\{\mathcal{F}_{j}\}_{j\in\mathbb{N}}$}
(resp. \textit{$\{\mathcal{G}_{j}\}_{j\in\mathbb{N}}$} ) is the unique
$\mathrm{LND}$-filtration of $R_{n_{1},e_{1},P_{1},m}$ (resp. $R_{n_{2},e_{2},P_{2},m}$
). Therefore, $\Psi$ restricts to an isomorphism between $k[x_{1},s_{1},y_{1}]=B_{n_{1},P_{1}}$
and $k[x_{2},s_{2},y_{2}]=B_{n_{2},P_{2}}$ Also, $\Psi$ restricts
to an isomorphism between $k[x_{1},s_{1}]$ and $k[x_{2},s_{2}]$
(resp. $k[x_{1}]$ and $k[x_{2}]$). So we conclude that $\Psi(x_{1})=\psi(x_{2})$,
$\Psi(s_{1})=\psi(s_{2})$, $\Psi(y_{1})=\psi(y_{2})$, and $\Psi(z_{1})=\psi(z_{2})$.
This directly implies that $n=n_{1}=n_{2}$, and $d=d_{1}=d_{2}$.

In addition, Theorem \ref{Theo:auto.new example } fully describes
$\psi$, so we get the following form of $\Psi$:
\[
\begin{cases}
\Psi(x_{1})=\psi(x_{2})=\lambda x_{2}\\
\Psi(s_{1})=\psi(s_{2})=\mu s_{2}+x_{2}^{n+e_{2}}a(x_{2})\\
\Psi(y_{1})=\psi(y_{2})=\frac{\mu^{d}}{\lambda^{n}}y_{2}+W\\
\Psi(z_{1})=\psi(z_{2})=\frac{\mu^{dm}}{\lambda^{nm+e_{2}}}z_{2}+\frac{(\frac{\mu^{d}}{\lambda^{n}}y_{2}+W)^{m}-\frac{\mu^{dm}}{\lambda^{nm}}y_{2}^{m}+x_{2}^{n+e_{2}}a(x_{2})}{\lambda^{e_{2}}x_{2}^{e_{2}}}
\end{cases}
\]
for certain $\mu,\lambda\in k*$ such that $\frac{\mu^{dm}}{\lambda^{nm}}=\mu$
and $g_{d-i}(\lambda x_{2})\equiv\mu^{i}g_{d-i}(x_{2})\mod\, x_{2}^{n+e_{2}}$
for every $i\in\{2,\ldots,d_{2}\}$, $a(x_{2})\in k[x_{2}]$, and
$W:=\frac{P_{2}(\lambda x_{2},\mu s_{2}+x_{2}^{n+e_{2}}a(x_{2}))-\mu^{d}P_{2}(x_{2},s_{2})}{\lambda^{n}x_{2}^{n}}.$

Finally, applying $\Psi$ to the relation $x_{1}^{e_{1}}z_{1}=y_{1}^{m}-s_{1}$
in $R_{n,e_{1},P_{1},m}$, we get $\lambda^{e_{1}}x_{2}^{e_{1}}\Psi(z_{1})=\frac{\mu^{md}}{\lambda^{mn}}[y_{2}^{m}-s_{2}]+b=\frac{\mu^{md}}{\lambda^{mn}}[x_{2}^{e_{2}}z_{2}]+b$
where $b\in\mathcal{G}_{md-1}$. 

Comparing top homogeneous components, relative to the filtration \textit{$\{\mathcal{G}_{j}\}_{j\in\mathbb{N}}$,}
for the last equation, we obtain $e=e_{1}=e_{2}$. And we are done.
\end{proof}

\subsubsection{\emph{Comparison with Danielewski $k$-domains}}

\indent\newline\noindent  The next Proposition shows that rings of
the new family $R_{n,e,P}$ (\emph{$e\neq0$}) are not isomorphic
to any of Danielewski $k$-domains. 
\begin{prop}
\label{prop:not iso trivial case} The ring $R_{n_{1},e,P,Q}=k[X,Y,Z]/\left\langle X^{n_{1}}Y-P\left(X,Q(X,Y)-X^{e}Z\right)\right\rangle $
is not isomorphic to $B_{n_{2},F}=k[X,S,Y]/\langle X^{n_{2}}Y-F(X,S)\rangle$
for any $n_{1},n_{2},e>0$, and $P(X,S),Q(X,S),F(X,S)\in k[X,S]$
such that $\deg_{S}F,\deg_{S}Q,\deg_{S}P\geq2$.\end{prop}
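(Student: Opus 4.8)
The plan is to show that the two families are distinguished by a structural invariant of their graded algebras, namely the number and arrangement of the singular points of $\mathrm{Spec}(\mathrm{Gr}_{\partial})$ together with the shape of the defining relations. The core idea: both $R_{n_{1},e,P,Q}$ (with $e\neq0$) and $B_{n_{2},F}$ are semi-rigid by Corollary~\ref{Coro:semi-rigidity- of-the-general-case} and by the Danielewski analysis recalled in \S\ref{Sec:D.w.esample}, so any $k$-isomorphism $\Psi$ between them would have to preserve the unique $\mathrm{LND}$-filtration (Proposition~\ref{prop:iso-preserve-filtration}) and therefore induce an isomorphism of the associated graded algebras. The graded algebra of $R_{n_{1},e,P,Q}$ is $k[X,Y,Z,S]/\langle X^{n_{1}}Y-S^{d},\,X^{e}Z-Y^{m}\rangle$ by Theorem~\ref{Thm:theLND-filtration}, while the graded algebra of $B_{n_{2},F}$ is $k[X,S,Y]/\langle X^{n_{2}}Y-S^{\deg_{S}F}\rangle$ by \S\ref{Sec:D.w.esample}. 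These two graded rings have different transcendence-theoretic and singularity data, and that is the contradiction I would extract.

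\textbf{First} I would suppose for contradiction that $\Psi:R_{n_{1},e,P,Q}\to B_{n_{2},F}$ is a $k$-isomorphism. By Proposition~\ref{prop:iso-preserve-filtration} it carries the unique $\mathrm{LND}$-filtration $\{\mathcal{F}_{i}\}$ of the source onto that $\{\mathcal{G}_{i}\}$ of the target, hence induces a graded $k$-isomorphism $\overline{\Psi}:\mathrm{Gr}_{\partial}(R_{n_{1},e,P,Q})\xrightarrow{\sim}\mathrm{Gr}_{\partial}(B_{n_{2},F})$. It also sends $\mathrm{ML}$ to $\mathrm{ML}$ and $\mathrm{AL}$ to $\mathrm{AL}$, so $\Psi$ restricts to isomorphisms $k[x]\to k[x]$ and $\mathrm{AL}(R_{n_{1},e,P,Q})=k[x,s]\to\mathrm{AL}(B_{n_{2},F})$. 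The key discriminating feature is the $\mathrm{AL}_{i}$-chain from Corollary~\ref{cor:The ALi-invariant for the new class}: for the new family the chain stabilizes only at level $md$ (with an intermediate stabilization at the Danielewski sub-ring $B_{n_{1},P}$ at level $d$), whereas for a genuine Danielewski domain $B_{n_{2},F}$ the chain reaches the whole ring already at level $\deg_{S}F$ with \emph{no} proper intermediate invariant sub-algebra strictly between $\mathrm{AL}$ and $B_{n_{2},F}$. Since the $\mathrm{AL}_{i}$-invariants are preserved by any $k$-isomorphism, the two $\mathrm{AL}_{i}$-chains must match term by term, which is impossible once $e\neq0$ forces a proper intermediate member $B_{n_{1},P}\subsetneq R_{n_{1},e,P,Q}$.

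\textbf{Alternatively}, and perhaps more robustly, I would argue on the graded side directly. The graded algebra $k[X,Y,Z,S]/\langle X^{n_{1}}Y-S^{d},\,X^{e}Z-Y^{m}\rangle$ is defined by two relations and has Krull dimension two, and its singular locus (computed via the Jacobian criterion, exactly as in the proof of Corollary~\ref{Coro:semi-rigidity- of-the-general-case}) is strictly larger than that of the hypersurface $k[X,S,Y]/\langle X^{n_{2}}Y-S^{\deg_{S}F}\rangle$: the presence of the second relation $X^{e}Z-Y^{m}$ with $e\geq1$ produces singularities along a component that cannot occur for a single Danielewski relation. Since $\overline{\Psi}$ would be an isomorphism of graded $k$-algebras, it would identify these singular loci, a contradiction.

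\textbf{The hard part} will be making the singularity or $\mathrm{AL}_{i}$-chain comparison airtight rather than merely plausible, because both families genuinely share the feature of being almost-rigid with kernel $k[x]$ and with $\mathrm{AL}=k[x,s]$; the distinction lives one level deeper. The cleanest route is almost certainly the $\mathrm{AL}_{i}$-chain argument, since it reduces everything to the already-established Corollary~\ref{cor:The ALi-invariant for the new class} and the structural stabilization levels: I would need to confirm that for a Danielewski domain $B_{n_{2},F}$ there is no invariant sub-algebra strictly between $\mathrm{AL}$ and the whole ring (which follows from its being generated over $\mathrm{AL}=k[x,s]$ by the single element $y$ of degree $\deg_{S}F$), whereas $e\neq0$ guarantees $R_{n_{1},e,P,Q}$ needs two further generators $y,z$ at distinct filtration levels $d$ and $md$, producing a genuine intermediate step. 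Matching the two chains then yields the required non-isomorphism, completing the contradiction.
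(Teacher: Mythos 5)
Your main route (the $\mathrm{AL}_{i}$-chain) is sound for $n_{2}\geq2$, and it is in substance a structural repackaging of the paper's own proof: the paper likewise invokes Proposition \ref{prop:iso-preserve-filtration} to see that an isomorphism $\Psi$ matches the two unique $\mathrm{LND}$-filtrations, restricts $\Psi$ to the $\mathrm{AL}$-invariants, compares filtration levels of generators to force $\deg_{S}P=\deg_{S}F$, and then derives a contradiction because $z$ would have to lie in $k[x_{1},s_{1},y_{1}]$. Your version replaces this element-chasing by matching the jump indices of the chains $\{\mathrm{AL}_{i}\}$: the chain of $R_{n_{1},e,P,Q}$ jumps at $1$, $d$, and $md$ (Corollary \ref{cor:The ALi-invariant for the new class}), while that of $B_{n_{2},F}$ jumps only at $1$ and $l=\deg_{S}F$; since an isomorphism carries $\mathrm{AL}_{i}$ onto $\mathrm{AL}_{i}$ and preserves strict inclusions, one gets $d=l$ from the first extra jump and $md=l$ from the second, contradicting $m\geq2$. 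Everything this needs is already established in the paper, and the formulation is arguably cleaner than the paper's.

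There is, however, a genuine gap: the case $n_{2}=1$, which the statement allows ($n_{2}>0$), is not covered. You assert that $B_{n_{2},F}$ is semi-rigid \emph{by the Danielewski analysis recalled in} \S\ref{Sec:D.w.esample}, but that analysis is explicitly conditional on $n\geq2$; for $n_{2}=1$ one has $\mathrm{ML}(B_{1,F})=k$, so $B_{1,F}$ is \emph{not} semi-rigid (Proposition \ref{prop:semi-rigid=00003DML=00003Dker}), there is no unique $\mathrm{LND}$-filtration, and the $\mathrm{AL}_{i}$-chain of $B_{1,F}$ is not as you describe, so your argument does not even start there. The paper disposes of this case separately and trivially: semi-rigidity is itself an isomorphism invariant, and $R_{n_{1},e,P,Q}$ is semi-rigid while $B_{1,F}$ is not; you need this (or the $\mathrm{ML}$ comparison $k[x]$ versus $k$) as a preliminary case. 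Separately, your fallback graded-side argument does not hold up as stated: by the Jacobian criterion both $\mathrm{Gr}(R_{n_{1},e,P,Q})$ and $\mathrm{Gr}(B_{n_{2},F})$ have one-dimensional singular locus (a single line, the $Z$-axis resp. the $Y$-axis), so neither is \emph{strictly larger}; worse, these graded rings are themselves members of the two families ($R_{n_{1},e,S^{d},Y^{m}}$ and $B_{n_{2},S^{l}}$), so distinguishing them is an instance of the very proposition being proved and cannot be settled by coarse singularity data. Stick with the $\mathrm{AL}_{i}$-chain argument and add the $n_{2}=1$ case, and the proof is complete.
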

\begin{proof}
The case where $n_{2}=1$ is obvious since $\mathrm{ML}(B_{1,P})=k$
which yields by Proposition \ref{prop:semi-rigid=00003DML=00003Dker}
that $B_{1,P}$ is not semi-rigid, while $R_{n_{1},F,Q}$ is for any
$n_{1}\geq1$, see Corollary \ref{Coro:semi-rigidity- of-the-general-case}.

Suppose that $n_{2}\geq2$, then both $B_{n_{2},P}$ and $R_{n_{1},e,F,Q}$
are semi-rigid $k$-domains. Let $x_{1},s_{1},y_{1},z$ be the class
of $X$, $S=Q(X,Y)-X^{e}Z$, $Y$, and $Z$ in $R_{n_{1},F,Q}$, and
let $x_{2},s_{2},y_{2}$ be the class of $X,\, S$, and $Y$ in $B_{n_{2},P}$.
denote by \textit{$\{\mathcal{F}_{i}\}_{i\in\mathbb{N}}$} (resp.
\textit{$\{\mathcal{G}_{i}\}_{i\in\mathbb{N}}$} ) the unique proper
$\mathbb{N}$-filtration of $R_{n_{1},e,F,Q}$ (resp. $B_{n_{2},P}$).

Let $\Psi:B_{n_{2},P}\longrightarrow R_{n_{1},e,F,Q}$ be an isomorphism
between the two rings, then $\Psi$ must respect the semi-rigid structure
of the two rings, that is, $\Psi\left(\mathcal{G}_{i}\right)=\mathcal{F}_{i}$
for every $i$, see \ref{sub:Isomorphisms-class}. This immediately
implies that $\Psi$ restricts to an isomorphism between $\mathrm{\mathrm{AL}}$-invariants
$k[x_{2},s_{2}]\underset{\Psi}{\simeq}k[x_{1},s_{1}]$. 

On the other hand, we have $y_{2}\in\mathcal{G}_{l}$, $y_{1}\in\mathcal{F}_{d}$,
$z\in\mathcal{F}_{md}$, where $\deg_{S}F=l,\deg_{S}Q=m,\deg_{S}P=d$.
Assume that $d\lneq l$, then there exists an element $b\in k[x_{2},s_{2}]$
such that $\Psi(b)=y_{1}$, which means that $y_{1}\in k[x_{1},s_{1}]$,
a contradiction. In the same way we get a contradiction if we assumed
that $l\lneq d$. So the only possibility is $d=l$, thus we conclude
that $k[x_{2},s_{2},y_{2}]\underset{\Psi}{\simeq}k[x_{1},s_{1},y_{1}]$.
Finally, let $b\in B_{n_{2},P}$ such that $\Psi(b)=z$. Since $\Psi(b)\in k[x_{1},y_{1},s_{1}]$,
we get $z\in k[x_{1},y_{1},s_{1}]$, which is a contradiction ($e\geq1$).
And we are done. 
\end{proof}

\section{\textbf{Cylinders over the new class }}

In this section we are interested in finding an algorithm to construct
an explicit isomorphism between cylinders over certain member of the
new family rather than stating that such cylinder are isomorphic.
The latter is known to be true in the abstract due to the classic
Danielewski argument.

We will create explicit isomorphisms between cylinders over rings
of the form $R_{n,e}$ defined by: 
\[
R_{n,e}:=R_{n,e,S^{2}+1,Y^{2}}=k[X,Y,Z]/\langle X^{n}Y-(Y^{2}-X^{e}Z)^{2}-1\rangle
\]
where $e\geq0$, $n\geq1$, and $(n,e)\neq(1,0)$\emph{.}

\subsection{Basic strategy \label{sub:Basic-properties}}

\indent\newline\noindent  Let $\Phi:k^{[N]}\longrightarrow k^{[N]}$
be an endomorphism of $k^{[N]}=k[X_{1},\ldots,X_{N}]$ and let $F\in k^{[N]}$
be an irreducible polynomial. Let $G$ be an irreducible factor of
$\Phi(F)$ in $k^{[N]}$, so we have $\Phi(F)\in\langle G\rangle$.
Consider the induced homomorphism of algebras $\Phi^{*}:k^{[N]}\longrightarrow k^{[N]}/\langle G\rangle$
given by $\Phi^{*}=\pi_{G}\circ\Phi$ where $\pi_{G}:k^{[N]}\longrightarrow k^{[N]}/\langle G\rangle$
is the natural projection. Notice that $\mathrm{Im}(\Phi^{*})\simeq k^{[N]}/\ker(\Phi^{*})$.

Now, suppose that $\Phi^{*}$ is surjective, then $\ker(\Phi^{*})=\langle F\rangle$.
Indeed, if $\Phi^{*}$ is surjective then $\mathrm{Im}(\Phi^{*})\simeq k^{[N]}/\langle G\rangle\simeq k^{[N]}/\ker(\Phi^{*})$
which implies in particular that the ideal $\ker(\Phi^{*})\subset k^{[N]}$
is principal. But since $\Phi^{*}(F)=0$, $\langle F\rangle\subset\ker\Phi^{*}$,
and $F$ is irreducible, we conclude that $\langle F\rangle=\ker\Phi^{*}$.

The latter shows that an isomorphism between $k^{[N]}/\langle F\rangle$
and $k^{[N]}/\langle G\rangle$ can be obtained if we find an endomorphism
of $k^{[N]}$ that verify: first $\Phi(F)\in\langle G\rangle$ (or
simply $\Phi(F)=G$), and second $\Phi^{*}=\pi_{G}\circ\Phi$ is surjective.

\subsection{The case $e\neq0$}

\indent\newline\noindent  First we will start with a simple case
and then we proceed to the general case by induction. We should mention
that following results are known abstractly, however here we give
an algorithm.
\begin{lem}
\label{Lem:R_1,1-R_1,2} $R_{1,1}\otimes_{k}k[T]\simeq R_{1,2}\otimes_{k}k[T]$.\end{lem}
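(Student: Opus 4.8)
The claim is that the cylinders $R_{1,1}\otimes_k k[T]$ and $R_{1,2}\otimes_k k[T]$ are isomorphic, where
$$R_{n,e}=k[X,Y,Z]/\langle X^{n}Y-(Y^{2}-X^{e}Z)^{2}-1\rangle.$$
Concretely, $R_{1,1}=k[X,Y,Z]/\langle XY-(Y^{2}-XZ)^{2}-1\rangle$ and $R_{1,2}=k[X,Y,Z]/\langle XY-(Y^{2}-X^{2}Z)^{2}-1\rangle$. By Proposition \ref{prop:iso non-tivial case} these two rings are themselves non-isomorphic (the invariant $e$ differs), so the point is that the cylinder kills the distinction, producing a new cancellation counterexample. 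The strategy is dictated by \S\ref{sub:Basic-properties}: I will not construct the isomorphism of the quotient rings by hand, but instead produce an explicit endomorphism $\Phi$ of a polynomial ring $k^{[4]}=k[X,Y,Z,T]$ such that $\Phi$ carries the defining polynomial $F$ of $R_{1,1}\otimes_k k[T]$ into the ideal generated by the defining polynomial $G$ of $R_{1,2}\otimes_k k[T]$, and such that the composite $\Phi^{*}=\pi_G\circ\Phi$ is surjective. Since both $F$ and $G$ are irreducible (the hypersurfaces are integral), the Basic Strategy then forces $\ker\Phi^{*}=\langle F\rangle$ and hence $\Phi^{*}$ induces the desired isomorphism.

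**Setting up the endomorphism.** First I would regard both cylinders as hypersurfaces in $k^{[4]}=k[X,Y,Z,T]$, with $T$ the cylinder variable. The governing idea is the classical Danielewski fibre-product trick in explicit form: the slices $Y^{2}-X^{e}Z$ behave like a single ``$S$'' variable with $\partial(S)=X^{n+e}$, and the extra variable $T$ provides the room needed to shift the exponent $e$ by one while preserving the relation $XY-S^{2}=1$. The plan is to write $S_1:=Y^{2}-XZ$ for the $e=1$ side and seek $\Phi$ sending the generators so that the image of $S_1$ becomes the ``$S$'' of the $e=2$ side modulo $G$. Concretely I expect $\Phi$ to fix $X$ and to send $Z$ and $T$ to expressions (linear in $T$ over $k[X,Y,Z]$) engineered so that $\Phi(XY-(Y^{2}-XZ)^{2}-1)$ equals, up to a unit in $k[X,Y,Z,T]/\langle G\rangle$, the polynomial $G=XY-(Y^{2}-X^{2}Z)^{2}-1$. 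The free variable $T$ is what lets me trade a factor of $X$ in the $Z$-term: roughly, replacing $Z$ by something like $XZ+(\text{correction involving }T)$ converts $X^{1}Z$ into $X^{2}Z$ while the $T$-direction absorbs the defect.

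**Carrying out the two checks.** With a candidate $\Phi$ in hand, the proof reduces to two verifications. The first is the \emph{ideal membership} $\Phi(F)\in\langle G\rangle$, equivalently $\Phi(F)=u\,G$ for a unit $u$ or $\Phi(F)$ reduces to $0$ modulo $G$; this is a direct polynomial computation that I would carry out by expanding $\Phi(Y^{2}-XZ)$ and substituting. The second, and genuinely the delicate part, is \emph{surjectivity} of $\Phi^{*}=\pi_G\circ\Phi$: I must exhibit preimages in $k^{[4]}$ for each of the classes $\bar X,\bar Y,\bar Z,\bar T$ in $R_{1,2}\otimes_k k[T]$. Surjectivity is where the cylinder variable earns its keep — one recovers $\bar Z$ of the target by solving the relation $\Phi$ imposes, using $\bar T$ and the unit $\bar X\bar Y-\bar S^2=1$ to clear denominators of the form $X^{-1}$. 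I anticipate this surjectivity verification to be the main obstacle, since it requires the endomorphism to be invertible on the quotient even though $\Phi$ itself need not be an automorphism of $k^{[4]}$; the algebra identity $XY - S^2 = 1$ (which makes $X$ and the slice $S$ into a ``unimodular'' pair) is the tool that makes the needed division by $X$ legitimate inside the quotient ring.

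**Conclusion.** Once both membership and surjectivity are confirmed, the Basic Strategy of \S\ref{sub:Basic-properties} applies verbatim: $\Phi^{*}$ is a surjection onto $k^{[4]}/\langle G\rangle\cong R_{1,2}\otimes_k k[T]$ with principal kernel $\langle F\rangle$, so it descends to an isomorphism
$$R_{1,1}\otimes_k k[T]\;\cong\;k^{[4]}/\langle F\rangle\;\xrightarrow{\ \sim\ }\;k^{[4]}/\langle G\rangle\;\cong\;R_{1,2}\otimes_k k[T],$$
which is the assertion of the Lemma. Since the statement claims only the case $(1,1)$ versus $(1,2)$, I would keep $\Phi$ fully explicit rather than inductive here, reserving the induction on $e$ for the general theorem that presumably follows.
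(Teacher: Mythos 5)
Your proposal correctly reconstructs the paper's strategy, which is exactly the Basic Strategy of \S\ref{sub:Basic-properties}: exhibit an endomorphism $\Phi$ of a polynomial ring with $\Phi(F)\in\langle G\rangle$ and $\Phi^{*}=\pi_{G}\circ\Phi$ surjective, conclude $\ker\Phi^{*}=\langle F\rangle$ from irreducibility of $F$, and hence obtain the isomorphism of cylinders. Your final observation --- that the relation $xy-s^{2}=1$ is what legitimizes ``division by $x$'' when recovering $z$ in the image --- is precisely the trick the paper uses ($sz=y(yz)-(xz)^{2}$ and then $z=z(xy-s^{2})=x(yz)-s(sz)$). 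So the approach is the right one and is the same as the paper's.

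However, there is a genuine gap: you never construct $\Phi$. Every substantive step is deferred (``I expect $\Phi$ to fix $X$\dots'', ``with a candidate $\Phi$ in hand\dots'', ``I anticipate this surjectivity verification to be the main obstacle''), yet the existence of such an endomorphism is exactly what the lemma asserts, and producing it is the entire content of the paper's proof. Concretely, what is missing is the following. The paper works in $k^{[5]}=k[X,S,Y,Z,T]$, treating the slice $S$ as a separate variable, and imposes two conditions: $\Phi(XY-S^{2}-1)=XY-S^{2}-1$ and $\Phi(Y^{2}-XZ-S)=Y^{2}-X^{2}Z-S$. Writing $\Phi(S)=S+H(X,T)$, these force $X^{2}\mid H$; moreover one must check that $X^{3}\nmid H$, since otherwise $\Phi(Z)$ is divisible by $X$ and $\Phi^{*}$ cannot be surjective --- a constraint your outline does not anticipate. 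The paper then takes $H=X^{2}T$, solves for the correction terms $L$ and the $Z$-component explicitly, and --- the subtlest point --- chooses $\Phi(T)$ so that $\Phi(YZ-XT)\equiv-4T$ modulo $\langle XY-S^{2}-1,\,Y^{2}-X^{2}Z-S\rangle$; it is this engineered identity, not merely the presence of the cylinder variable, that places $t$ in $\mathrm{Im}\,\Phi^{*}$ and starts the surjectivity chain $x,t\Rightarrow s\Rightarrow y\Rightarrow xz\Rightarrow yz\Rightarrow sz\Rightarrow z$. Without explicit formulas, neither the ideal membership nor the surjectivity can be verified, so what you have written is a correct plan for a proof rather than a proof.
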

\begin{proof}
$($Which is also an \textbf{\large algorithm} to construct isomorphisms$)$

Let $\Phi:k^{[5]}\longrightarrow k^{[5]}$ be the endomorphism of
$k^{[5]}=k[X,S,Y,Z,T]$ defined as follows: 
\[
\begin{cases}
\Phi(X)=X\\
\Phi(S)=S+H(X,T)\\
\Phi(Y)=Y+L(X,S,T)\\
\Phi(Z)=XZ+F(X,S,Y,T)\\
\Phi(T)=T(X,S,Y,Z,T)
\end{cases}
\]
We choose $H=H(X,T)$ such that:

a) $\Phi(XY-S^{2}-1)=XY-S^{2}-1$. This gives the following relation
between $H$ and $L=L(X,S,T)$:
\[
XL=2HS+H^{2}
\]
which implies that $X$ divides $H$. So $H=XH_{1}$, hence $L=2H_{1}S+XH_{1}^{2}$.

b) $\Phi(Y^{2}-XZ-S)=Y^{2}-X^{2}Z-S$. This gives the following relation
between $H$, and $F=F(X,S,Y,T)$: 
\[
XF=2Y(2H_{1}S+XH_{1}^{2})+(2H_{1}S+XH_{1}^{2})^{2}-XH_{1}
\]
which directly implies that $X$ divides $H_{1}$, and we obtain $H=X^{2}H_{2}$
for some $H_{2}\in k[X,T]$.

Note that if $X^{3}$divides $H$, then we immediately notice that
$\Phi(Z)$ is divisible by $X$ which implies that $\Phi$ does not
induce an isomorphism. Therefore, The only choice for $H$ is such
that $X^{2}$ divides $H$ but $X^{3}$ does not. 

Let for instance $H(X,T)=X^{2}T$ (any other choice such that $H_{2}\in k[T]$
will do), then it reminds to determine $\Phi(T)$ to fully describe
$\Phi$. 

Choose $\Phi(T)$ to be such that the following holds 
\[
\Phi(YZ-XT)=4T(-XY)+4TS(Y^{2}-X^{2}Z)
\]
which simply means that 
\[
\Phi(YZ-XT)\equiv-4T\mod\langle XY-S^{2}-1,Y^{2}-X^{2}Z-S\rangle.
\]
 Note that such a choice of $\Phi(T)$ is always possible even in
a more complicated situation where $P(X,S)$ can be any polynomial
in $k[X,S]$.

An elementary computation can determine $\Phi(T)$ to reach to the
following form of $\Phi$:

(1) $\Phi(X)=X$ 

(2) $\Phi(S)=S+X^{2}T$ 

(3) $\Phi(Y)=Y+2XST+X^{3}T^{2}$

(4) $\Phi(Z)=XZ+4SYT-XT+2X^{2}YT^{2}+4XS^{2}T^{2}+4X^{3}ST^{3}+X^{5}T^{4}$

(5) $\Phi(T)=ZY+6XSZT+3YT+2XY^{2}T^{2}+12S^{2}YT^{2}+X^{3}ZT^{2}-X^{3}T^{3}+12X^{2}SYT^{3}+8XS^{3}T^{3}+3X^{4}YT^{4}+12X^{3}S^{2}T^{4}+6X^{5}ST^{5}+X^{7}T^{6}$

Now, define $\phi:k^{[4]}\longrightarrow k^{[4]}$ to be the endomorphism
of $k^{[4]}=k[X,Y,Z,T]$ given by: $\phi(X)=\Phi(X)$, $\phi(Y)=\Phi(Y)$,
$\phi(Z)=\Phi(Z)$, $\phi(T)=\Phi(T)$ where we substitute $S$ by
$S=Y^{2}-X^{2}Z$. Then we have $\phi(Y^{2}-XZ)=S+X^{2}T$, $\phi\left(XY-(Y^{2}-XZ)^{2}-1\right)=XY-(Y^{2}-X^{2}Z)^{2}-1$,
and $\phi(YZ-XT)=-4T\mod\langle XY-(Y^{2}-X^{2}Z)^{2}-1\rangle$.

As discussed before \S \ref{sub:Basic-properties}, to prove that
$\phi$ induces an isomorphism between $R_{1,1}\otimes_{k}k[T]$ and
$R_{1,2}\otimes_{k}k[T]$, it is enough to show that $\phi^{*}=\pi_{XY-(Y^{2}-X^{2}Z)^{2}-1}\circ\phi$
is surjective. For that, denote by $x,s,y,z,t$ the class of $X,S,Y,Z$,
and $T$ in $R_{1,2}\otimes_{k}k[T]$, then (1) immediately shows
that $x\in\mathrm{Im}\phi^{*}$. By construction $t\in\mathrm{Im}\phi^{*}$.
Therefore, (2) implies $s\in\mathrm{Im}\phi^{*}$, and (3) implies
that $y\in\mathrm{Im}\phi^{*}$. So (4) provides $xz\in\mathrm{Im}\phi^{*}$,
and (5) ensures that $yz\in\mathrm{Im}\phi^{*}$. Since $s=y^{2}-x^{2}z$,
we get $sz=y(yz)-(xz)^{2}$. This means that $sz\in\mathrm{Im}\phi^{*}$.
Finally, since $z=z(xy-s^{2})=x(yz)-s(sz)$ where all terms in the
second part of the last equation belong to $\mathrm{Im}\phi^{*}$,
we deduce that $z\in\mathrm{Im}\phi^{*}$. In conclusion, $\phi^{*}$
is surjective. 
\end{proof}
The exact same algorithm, as in the proof of Lemma \ref{Lem:R_1,1-R_1,2},
can be applied to construct an isomorphism between $R_{1,e}\otimes_{k}k[T]$
and $R_{1,e+1}\otimes_{k}k[T]$ for every $e>0$. The only different
step is that $\Phi(T)$ must be chosen to verify:
\[
\Phi(YZ-XT)=4T(-XY)+4TS(Y^{2}-X^{e+1}Z).
\]
 Also, the same algorithm can be used to establish an isomorphism
between $R_{n,1}\otimes_{k}k[T]$ and $R_{n,2}\otimes_{k}k[T]$ for
every $n>0$, where $\Phi(T)$ is chosen to hold: 
\[
\Phi(YZ-XT)=4T(-X^{n}Y)+4TS(Y^{2}-X^{2}Z).
\]
 We put together the previous observation to obtain the following. 
\begin{lem}
$R_{1,e}\otimes_{k}k[T]$ $\simeq$ $R_{1,e+1}\otimes_{k}k[T]$, and
$R_{n,1}\otimes_{k}k[T]$ $\simeq$ $R_{n,2}\otimes_{k}k[T]$.
\end{lem}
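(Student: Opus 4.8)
The plan is to deduce both isomorphisms from a single run of the algorithm in the proof of Lemma~\ref{Lem:R_1,1-R_1,2}, adjusting only the data that depends on the exponents. Following the basic strategy of \S\ref{sub:Basic-properties}, it suffices in each case to exhibit an endomorphism $\Phi$ of $k^{[5]}=k[X,S,Y,Z,T]$ carrying the twisted defining relations of the source ring into the ideal of the target ring, to pass to the induced endomorphism $\phi$ of $k^{[4]}=k[X,Y,Z,T]$ by substituting $S=Y^{2}-X^{e+1}Z$ (resp.\ $S=Y^{2}-X^{2}Z$), and to verify that $\phi^{*}=\pi_{G}\circ\phi$ is surjective, where $G$ is the irreducible polynomial defining the target. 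Once surjectivity is proved, \S\ref{sub:Basic-properties} gives $\ker\phi^{*}=\langle F\rangle$, so that $\phi^{*}$ descends to the desired isomorphism.

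For $R_{1,e}\otimes_{k}k[T]\simeq R_{1,e+1}\otimes_{k}k[T]$ I would set $\Phi(X)=X$, $\Phi(S)=S+H$, $\Phi(Y)=Y+L$, $\Phi(Z)=XZ+F$, and impose the two relations $\Phi(XY-S^{2}-1)=XY-S^{2}-1$ and $\Phi(Y^{2}-X^{e}Z-S)=Y^{2}-X^{e+1}Z-S$. The first relation forces $X$ to divide $H$ and fixes $L$ in terms of $H$; substituting into the second relation reproduces the divisibility cascade of Lemma~\ref{Lem:R_1,1-R_1,2}, which now forces $X^{e+1}$ to divide $H$, while the requirement that $\Phi(Z)$ not be divisible by $X$ forbids $X^{e+2}\mid H$. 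The minimal admissible choice $H=X^{e+1}T$ then determines $L$ and $F$ explicitly, and $\Phi(T)$ is pinned down by
\[
\Phi(YZ-XT)=4T(-XY)+4TS(Y^{2}-X^{e+1}Z),
\]
which reduces to $-4T$ modulo the target ideal; as observed after Lemma~\ref{Lem:R_1,1-R_1,2}, such a $\Phi(T)$ always exists. The case $R_{n,1}\otimes_{k}k[T]\simeq R_{n,2}\otimes_{k}k[T]$ is identical with the relations $\Phi(X^{n}Y-S^{2}-1)=X^{n}Y-S^{2}-1$ and $\Phi(Y^{2}-XZ-S)=Y^{2}-X^{2}Z-S$; here the cascade forces $X^{n+1}$ to divide $H$ but not $X^{n+2}$, and $\Phi(T)$ is chosen so that $\Phi(YZ-XT)=4T(-X^{n}Y)+4TS(Y^{2}-X^{2}Z)$.

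The surjectivity of $\phi^{*}$ then runs verbatim as in Lemma~\ref{Lem:R_1,1-R_1,2}. Writing $x,s,y,z,t$ for the classes in the target, the relation $\Phi(X)=X$ gives $x\in\mathrm{Im}\,\phi^{*}$, the construction of $\Phi(T)$ gives $t\in\mathrm{Im}\,\phi^{*}$, and then $\Phi(S),\Phi(Y),\Phi(Z)$ successively yield $s,y,xz\in\mathrm{Im}\,\phi^{*}$, while the relation defining $\Phi(T)$ yields $yz\in\mathrm{Im}\,\phi^{*}$. Using $s=y^{2}-x^{e+1}z$ (resp.\ $s=y^{2}-x^{2}z$) I recover $sz=y(yz)-x^{e-1}(xz)^{2}$ (resp.\ $sz=y(yz)-(xz)^{2}$), and finally the defining relation $xy-s^{2}=1$ (resp.\ $x^{n}y-s^{2}=1$) gives $z=x(yz)-s(sz)$ (resp.\ $z=x^{n}(yz)-s(sz)$); hence $z\in\mathrm{Im}\,\phi^{*}$ and $\phi^{*}$ is onto.

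The hard part will be the divisibility analysis of the second relation: one must check that the cascade forces exactly the power $X^{e+1}$ (resp.\ $X^{n+1}$) of $X$ in $H$, since a smaller power fails to match the target relation while a larger power makes $\Phi(Z)$ divisible by $X$ and destroys surjectivity of $\phi^{*}$; and one must check that the resulting equation for $\Phi(T)$ can indeed be solved within $k^{[5]}$. Granting these two points, the explicit determination of $\Phi(S),\Phi(Y),\Phi(Z),\Phi(T)$ and the chain of surjectivity deductions are routine, exactly parallel to Lemma~\ref{Lem:R_1,1-R_1,2}.
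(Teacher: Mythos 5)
Your proposal is correct and takes essentially the same approach as the paper, which proves this lemma simply by rerunning the algorithm of Lemma \ref{Lem:R_1,1-R_1,2} with exactly the two modified conditions on $\Phi(T)$ that you state. The extra details you supply --- the divisibility cascade forcing $X^{e+1}$ (resp.\ $X^{n+1}$) to divide $H$ but no higher power, and the surjectivity chain $x,t,s,y,xz,yz,sz,z\in\mathrm{Im}\,\phi^{*}$ --- are precisely the computations the paper leaves implicit in the phrase ``the exact same algorithm can be applied''.
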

Finally, by induction we get: 
\begin{thm}
\label{Thm:Cylinder over R_n,e_1 iso to Cylinder over R_n,e_2 } $R_{n,e_{1}}\otimes_{k}k[T]\simeq R_{n,e_{2}}\otimes_{k}k[T]$
for every $n,e_{1},e_{2}>0$.

In addition, if $\phi_{e_{1}+i,e_{1}+i+1}$ is the endomorphisms,
as determined in Lemma \ref{Lem:R_1,1-R_1,2}, of $k^{[4]}$ that
induces an isomorphism between $R_{n,e_{1}+i}[T]$ and $R_{n,e_{1}+i+1}[T]$,
then the endomorphisms $\phi_{e_{2}-1,e_{2}}\circ\cdots\circ\phi_{e_{1},e_{1}+1}$
of $k^{[4]}$ induces an isomorphism between $R_{n,e_{1}}[T]$ and
$R_{n,e_{2}}[T]$.
\end{thm}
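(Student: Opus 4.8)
The plan is to reduce the statement to the single-step isomorphisms produced by the algorithm of Lemma \ref{Lem:R_1,1-R_1,2} and then to iterate. Write $F_{e}:=X^{n}Y-(Y^{2}-X^{e}Z)^{2}-1\in k^{[4]}=k[X,Y,Z,T]$ for the (irreducible) defining polynomial of $R_{n,e}$, so that $R_{n,e}\otimes_{k}k[T]=k^{[4]}/\langle F_{e}\rangle$. The first step is to record that the construction of Lemma \ref{Lem:R_1,1-R_1,2}, carried out with $\Phi(T)$ chosen so that $\Phi(YZ-XT)=4T(-X^{n}Y)+4TS(Y^{2}-X^{e+1}Z)$, yields for every $n>0$ and every $e>0$ an endomorphism $\phi_{e,e+1}$ of $k^{[4]}$ with $\phi_{e,e+1}(F_{e})=F_{e+1}$ and with $\pi_{F_{e+1}}\circ\phi_{e,e+1}$ surjective. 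By the basic strategy of \S\ref{sub:Basic-properties}, $\phi_{e,e+1}$ then induces an isomorphism $\bar{\phi}_{e,e+1}\colon R_{n,e}\otimes_{k}k[T]\xrightarrow{\sim}R_{n,e+1}\otimes_{k}k[T]$.

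With these building blocks in hand, I would argue by induction on $e_{2}-e_{1}$, assuming without loss of generality $e_{1}\le e_{2}$ (the opposite case follows by passing to the inverse isomorphisms). The case $e_{1}=e_{2}$ is the identity. For the inductive step one simply composes: $\bar{\phi}_{e_{2}-1,e_{2}}\circ\cdots\circ\bar{\phi}_{e_{1},e_{1}+1}$ is an isomorphism $R_{n,e_{1}}\otimes_{k}k[T]\xrightarrow{\sim}R_{n,e_{2}}\otimes_{k}k[T]$, being a composition of isomorphisms. This proves the first assertion.

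For the additional (explicit) claim I would verify that this composite isomorphism is precisely the one induced by the composite endomorphism $\Theta:=\phi_{e_{2}-1,e_{2}}\circ\cdots\circ\phi_{e_{1},e_{1}+1}$ of $k^{[4]}$. Since each factor sends $F_{e}$ to $F_{e+1}$, a telescoping computation gives $\Theta(F_{e_{1}})=F_{e_{2}}$. Moreover, unwinding the definitions yields the identity $\pi_{F_{e_{2}}}\circ\Theta=(\bar{\phi}_{e_{2}-1,e_{2}}\circ\cdots\circ\bar{\phi}_{e_{1},e_{1}+1})\circ\pi_{F_{e_{1}}}$, whose right-hand side is surjective because $\pi_{F_{e_{1}}}$ is surjective and each $\bar{\phi}$ is an isomorphism. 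Hence $\pi_{F_{e_{2}}}\circ\Theta$ is surjective, and the basic strategy of \S\ref{sub:Basic-properties} applies verbatim (with the irreducible polynomials $F_{e_{1}}$ and $F_{e_{2}}$) to conclude that $\Theta$ induces the desired isomorphism.

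The main obstacle lies not in this formal bookkeeping but in the first step: confirming that the surjectivity argument of Lemma \ref{Lem:R_1,1-R_1,2} survives for arbitrary $n$ and $e$ simultaneously. After recovering $x,t,s,y$ and $xz$ from the images of the defining relations, the final recovery of $z$ must be redone with the general relations $s=y^{2}-x^{e+1}z$ and $x^{n}y-s^{2}=1$; one then finds $sz=y(yz)-x^{e-1}(xz)^{2}$ and $z=x^{n}(yz)-s(sz)$, so that $z\in\mathrm{Im}(\phi_{e,e+1}^{*})$. One must also re-verify the divisibility conditions that pin down the auxiliary polynomials $H,L,F$ in the construction — in particular that a suitable power of $X$ divides $H$ while $\Phi(Z)$ retains leading term $XZ$ (so that $xz$, rather than a higher power, is what gets recovered) — so that $\phi_{e,e+1}$ is a genuine endomorphism realizing $\phi_{e,e+1}(F_{e})=F_{e+1}$ exactly. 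Once this is secured the induction and composition are immediate.
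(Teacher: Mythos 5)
Your proposal is correct and follows essentially the same route as the paper: one-step isomorphisms $R_{n,e}\otimes_{k}k[T]\simeq R_{n,e+1}\otimes_{k}k[T]$ produced by the algorithm of Lemma \ref{Lem:R_1,1-R_1,2} with the modified choice of $\Phi(T)$, followed by composition and induction. You additionally make explicit two points the paper leaves implicit --- the single-step construction for arbitrary $(n,e)$ simultaneously (the paper's intermediate lemma only records the cases $n=1$ for all $e$, and $e=1\to2$ for all $n$) and the telescoping/descent bookkeeping showing that the composite endomorphism itself induces the isomorphism --- and your generalized recovery formulas $sz=y(yz)-x^{e-1}(xz)^{2}$ and $z=x^{n}(yz)-s(sz)$ are exactly what the surjectivity argument needs in the general case.
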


\subsubsection{\emph{A counter-example of the cancellation problem}}

\indent\newline\noindent  Consider the following chains of inclusions,
which is realized by sending $Z$ to $XZ$ in every step. 
\[
R_{n_{0},1}\hookrightarrow R_{n_{0},2}\hookrightarrow\cdots\hookrightarrow R_{n_{0},e}
\]
for every $n_{0}\in\{1,\ldots,n\}$.

They are pairwise not isomorphic to each other by Proposition \ref{prop:iso non-tivial case},
whereas, Theorem \ref{Thm:Cylinder over R_n,e_1 iso to Cylinder over R_n,e_2 }
indicates 
\[
R_{n_{0},1}\otimes_{k}k[T]\simeq R_{n_{0},2}\otimes_{k}k[T]\simeq\cdots\simeq R_{n_{0},e}\otimes_{k}k[T]
\]
for every $n_{0}\in\{1,\ldots,n\}$.

\subsection{Cylinders over Danielewski $k$-domains, the case $e=0$ }

\indent\newline\noindent  Here, we will show how to create an isomorphism
between cylinders over $k$-domains of the form: 
\[
B_{n,P}=k[X,S,Y]/\langle X^{n}Y-P(X,S)\rangle
\]
for every $n\geq1$. Where $P(X,S)=S^{d}+XQ(X,S)+c$, $c\in k-\{0\}$,
and $Q(X,S)\in k[X,S]$, 

First, we illustrate how the algorithm, presented in the proof of
Lemma \ref{Lem:R_1,1-R_1,2}, can be modified to establish isomorphisms
between the below mentioned rings and then we proceed to the general
case.
\begin{lem}
\label{lem:B_1,P-B_2,P} $B_{1,P}\otimes_{k}k[T]$ $\simeq$ $B_{2,P}\otimes_{k}k[T]$,
where $P(X,S)=S^{4}+X^{2}S^{2}+1$.\end{lem}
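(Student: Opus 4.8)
The plan is to apply the strategy of \S\ref{sub:Basic-properties} verbatim, now in $k^{[4]}=k[X,S,Y,T]$ with $F=XY-P(X,S)$ and $G=X^{2}Y-P(X,S)$, so that $k^{[4]}/\langle F\rangle=B_{1,P}\otimes_{k}k[T]$ and $k^{[4]}/\langle G\rangle=B_{2,P}\otimes_{k}k[T]$. Both $F$ and $G$ are irreducible: regarded in $k[X,S][Y]$ each is degree one in $Y$ with coprime coefficients, since $X\nmid P(X,S)$ because $P(0,S)=S^{4}+1$. Thus it suffices to construct an endomorphism $\Phi$ of $k^{[4]}$ with $\Phi(F)=G$ and $\Phi^{*}:=\pi_{G}\circ\Phi$ surjective; then $\ker\Phi^{*}=\langle F\rangle$ and $\Phi$ induces the desired isomorphism.

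First I would fix the shift by setting $\Phi(X)=X$, $\Phi(S)=S+XT$, and
\[
\Phi(Y)=XY+\frac{P(X,S+XT)-P(X,S)}{X}.
\]
The bracket is divisible by $X$ because $\Phi(S)-S=XT$, so $\Phi(Y)$ is a genuine polynomial, and $X\Phi(Y)=X^{2}Y+P(X,S+XT)-P(X,S)$, whence $\Phi(F)=X\Phi(Y)-P(X,S+XT)=X^{2}Y-P(X,S)=G$ exactly. The single power $X^{1}$ (rather than $X^{2}$, which would also preserve the relation) is forced by surjectivity: only then is the additive part $\Delta:=\tfrac{1}{X}\bigl(P(X,S+XT)-P(X,S)\bigr)=4S^{3}T+\cdots$ of $\Phi(Y)$ \emph{not} divisible by $X$, which is what will let us peel off the divided variable $Y$.

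The only remaining freedom is $\Phi(T)$, which must play the role of the map $T\mapsto\Phi(T)$ in Lemma \ref{Lem:R_1,1-R_1,2}. Imitating the relation $\Phi(YZ-XT)\equiv-4T$ used there, I would determine $\Phi(T)$ by imposing
\[
\Phi(SY-XT)\equiv-4T\pmod{G},
\]
that is $X\Phi(T)\equiv\Phi(S)\Phi(Y)+4T\pmod{G}$. Reducing $\Phi(S)\Phi(Y)$ modulo $G$ (using $X^{2}Y\equiv P$) and then modulo $\langle X,G\rangle=\langle X,S^{4}+1\rangle$ yields $\Phi(SY)\equiv-4T$, so $\Phi(SY)+4T\in\langle X,G\rangle$ and $\Phi(T)$ is recovered as a polynomial by an elementary (if tedious) division; one checks its top part has the form $\Phi(T)=SY+(\text{a multiple of }XTY)+(\text{terms in }X,S,T)$.

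Surjectivity is then proved in one chain, where $x,s,y,t$ denote the classes of $X,S,Y,T$ in $B_{2,P}\otimes_{k}k[T]$. From $\Phi^{*}(X)=x$ we get $x$; the defining relation gives $t=-\tfrac{1}{4}\bigl(\Phi^{*}(S)\Phi^{*}(Y)-x\,\Phi^{*}(T)\bigr)$; then $s=\Phi^{*}(S)-xt$ and $xy=\Phi^{*}(Y)-\Delta(x,s,t)$ lie in the image; next, since $\Phi^{*}(T)=sy+t\cdot(\text{multiple of }xy)+g(x,s,t)$ for some $g\in k[x,s,t]$, the element $sy$ lies in the image too; finally the nonzero constant term of $P$, which yields the unit relation $x^{2}y-x^{2}s^{2}-s^{4}=1$ in $B_{2,P}$, gives $y=(xy)^{2}-xs^{2}(xy)-s^{3}(sy)$, so $y$ is in the image as well. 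Hence $\Phi^{*}$ is surjective and the proof concludes. The main obstacle is precisely the recovery of the divided variable $y$: unlike $x,s,t$ it enters the image only through $xy$, and extracting $y$ requires engineering $\Phi(T)$ so that $sy$ also enters the image and then exploiting the unit $x^{2}y-x^{2}s^{2}-s^{4}=1$; everything else, including the explicit polynomial $\Phi(T)$, is routine computation modelled on Lemma \ref{Lem:R_1,1-R_1,2}.
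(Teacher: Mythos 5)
Your proposal is correct and follows essentially the same route as the paper's proof: the same endomorphism $\Phi$ (with $\Phi(S)=S+XT$ and $\Phi(Y)=XY+\bigl(P(X,S+XT)-P(X,S)\bigr)/X$), the same normalization $\Phi(SY-XT)\equiv-4T\pmod{G}$ to define $\Phi(T)$, and the same surjectivity chain $x,t,s,xy,sy$ culminating in the recovery of $y$ via the unit relation $x^{2}y-s^{4}-x^{2}s^{2}=1$. The only cosmetic differences are that you justify the existence of $\Phi(T)$ by ideal membership in $\langle X,G\rangle$ rather than by the paper's explicit computation, and you add the (harmless, and in fact needed) irreducibility check for $F$ and $G$.
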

\begin{proof}
In a similar way as in the proof of Lemma \ref{Lem:R_1,1-R_1,2},
we will establish an isomorphism between $B_{1,P}[T]$ and $B_{2,P}[T]$. 

Let $\Phi:k^{[4]}\longrightarrow k^{[4]}$ be the endomorphism of
$k^{[4]}$ defined as follows:
\[
\begin{cases}
\Phi(X)=X\\
\Phi(S)=S+H(X,T)\\
\Phi(Y)=XY+L(X,S,T)\\
\Phi(T)=T(X,S,Y,T)
\end{cases}
\]
We choose $H=H(X,T)$ such that $\Phi(XY-S^{4}-X^{2}S^{2}-Xf(X)-1)=X^{2}Y-S^{4}-X^{2}S^{2}-Xf(X)-1$.
This gives the following relation between $H$ and $L=L(X,S,T)$ :
\[
XL=H^{4}+4H^{3}S+6H^{2}S^{2}+4HS^{3}+H^{2}X^{2}+2HSX^{2}
\]
which directly implies that $H$ is divisible by $X$. Note that if
$X^{2}$divides $H$, then we immediately obtain $\Phi(Y)$ is divisible
by $X$ which implies that $\Phi$ will never induces an isomorphism.
Therefore, The only choice for $H$ is such that $X$ divides $H$
but $X^{2}$ does not. So let $H(X,T)=XT$, then a simple computation
leads to 
\[
L(X,S,T)=X^{3}T^{4}+4X^{2}T^{3}S+6XT^{2}S^{2}+4TS^{3}+T^{2}X^{3}+2TSX^{2}.
\]

Choose $\Phi(T)$ such that 
\[
\Phi(YS-XT)=4T(-X^{2}Y+S^{4}+X^{2}S^{2}+Xf(X)),
\]
which can be done by virtue of the condition ($X$ divides every coefficient
of $P(X,S)-S^{4}-1$). Notice that this choice is made to get 
\[
\Phi(YS-XT)\equiv-4T\mod(X^{2}Y-S^{4}-X^{2}S^{2}-Xf(X)-1),
\]
which simply implies that $T+\langle X^{2}Y-P\rangle\in\mathrm{Im}\pi_{X^{2}Y-P}\circ\Phi$.
An elementary computation can determine $\Phi(T)$ to reach to the
following form of $\Phi$:

(1) $\Phi(X)=X$

(2) $\Phi(S)=S+XT$

(3) $\Phi(Y)=XY+X^{3}T^{4}+4X^{2}T^{3}S+6XT^{2}S^{2}+4TS^{3}+T^{2}X^{3}+2TSX^{2}$

(4) $\Phi(T)=SY+5XYT-4f(X)T-2S^{2}TX+3ST^{2}X^{2}+10S^{3}T^{2}+T^{3}X^{3}+10S^{2}T^{3}X+5ST^{4}X^{2}+T^{5}X^{3}$

Now, as we discussed before \S \ref{sub:Basic-properties}, to prove
that $\Phi$ induces an isomorphism between $B_{1,P}$ and $B_{2,P}$,
it is enough to show that $\Phi^{*}=\pi_{X^{2}Y-P}\circ\Phi$ is surjective.
For that, denote by $x,s,y,t$ the class of $X,S,Y$, and $T$ in
$R_{2,0,P}$, then immediately we see that $x\in\mathrm{Im}\Phi^{*}$.
By construction $t\in\mathrm{Im}\Phi^{*}$, therefore (2) implies
$s\in\mathrm{Im}\Phi^{*}$. Thus (3) provides $xy\in\mathrm{Im}\Phi^{*}$,
again using (4) we see that $sy\in\mathrm{Im}\Phi^{*}$. Finally,
since $y=y.1=y(x^{2}y-s^{4}-s^{2}x^{2}-xf(x))=(xy)^{2}-(ys)s^{3}-(ys)sx^{2}-(xy)f(x)$
where all terms in the second part of the last equation belong to
$\mathrm{Im}\Phi^{*}$, we deduce that $y\in\mathrm{Im}\Phi^{*}$.
Since $x,s,y,t$ are generators of $B_{2,P}$, we conclude that $\Phi^{*}$
is surjective.
\end{proof}
The exact same method, as in the proof of Lemma \ref{lem:B_1,P-B_2,P},
can be used to create an isomorphism between $k$-domains presented
in the following Proposition. Where we imposed conditions, $X$ divides
every coefficient of $P(X,S)-S^{d}-c$ as a polynomial in $S$, and
$c\neq0$, which are essential to enable us to proceed.
\begin{lem}
\label{Lem:cylinder iso} $B_{n,P}\otimes_{k}k[T]\simeq B_{n+1,P}\otimes_{k}k[T]$.

where $n\geq1$, $d\geq2$, $c\in k\setminus\{0\}$, $P(X,S)=S^{d}+XQ(X,S)+c$,
and $Q(X,S)\in k[X,S]$ with no restriction on the degree of $Q(0,S)$.
\end{lem}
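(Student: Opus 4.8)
The plan is to run the basic strategy of \S\ref{sub:Basic-properties} verbatim, working in $k^{[4]}=k[X,S,Y,T]$ and presenting both cylinders as hypersurfaces $B_{n,P}\otimes_k k[T]=k^{[4]}/\langle F\rangle$ and $B:=B_{n+1,P}\otimes_k k[T]=k^{[4]}/\langle G\rangle$, where $F=X^nY-P(X,S)$ and $G=X^{n+1}Y-P(X,S)$. Both are irreducible, being primitive (content $1$, since $P(0,S)=S^d+c$ is not divisible by $X$) and of degree one in $Y$ over the UFD $k[X,S,T]$. So it suffices to produce an endomorphism $\Phi$ of $k^{[4]}$ with $\Phi(F)=G$ and with $\Phi^*:=\pi_G\circ\Phi$ surjective; then $\ker\Phi^*=\langle F\rangle$ and $\Phi^*$ descends to the desired isomorphism.

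First I would fix the action on $X,S,Y$, which is essentially forced. Set $\Phi(X)=X$, $\Phi(S)=S+X^nT$, and $\Phi(Y)=XY+L$, where
\[
L:=\frac{P(X,S+X^nT)-P(X,S)}{X^n}.
\]
Here $L$ is a genuine polynomial because each $(S+X^nT)^i-S^i$ is divisible by $X^nT$, and direct substitution gives $\Phi(F)=X^n(XY+L)-P(X,S+X^nT)=X^{n+1}Y-P(X,S)=G$, independently of the yet-to-be-chosen $\Phi(T)$. A point worth recording is that $X\nmid L$: reducing modulo $X$ yields $L\equiv dTS^{d-1}\neq0$, so $X\nmid\Phi(Y)$, which is exactly what will let $\Phi^*$ recover $Y$ later.

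Everything then comes down to choosing $\Phi(T)$ so that the four classes $x$, $\sigma:=s+x^nt$, $\eta:=xy+\ell$ (with $\ell:=L(x,s,t)$) and $m:=\Phi^*(T)$ generate $B$. The intended extraction chain mirrors Lemma \ref{lem:B_1,P-B_2,P}: recover $x$, then the cylinder class $t$, then $s=\sigma-x^nt$, then $xy=\eta-\ell$ (legitimate since $\ell\in k[x,s,t]$), then the product $sy$, and finally $y$ itself from
\[
cy=x^{n-1}(xy)^2-(sy)s^{d-1}-(xy)Q(x,s),
\]
the identity obtained by multiplying $x^{n+1}y=P(x,s)=s^d+xQ(x,s)+c$ by $y$. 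This last step is where $c\neq0$ is indispensable, and the step producing $t$ is where $X\mid(P-S^d-c)$ is indispensable: modulo $x$ one has $B/(x)=k[s,y,t]/(s^d+c)$, in which $s$ is a unit (as $c\neq0$) and $\frac{\partial P}{\partial S}\equiv dS^{d-1}$, so $\bar\eta=d\,t\,s^{d-1}$ and $t$ is recoverable from $\bar\eta$ and $\bar\sigma=s$; the hypothesis $X\mid(P-S^d-c)$ is precisely what kills the intermediate coefficients of $P$ modulo $X$ and makes $\frac{\partial P}{\partial S}\equiv dS^{d-1}$ a unit multiple of a monomial there.

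The hard part will be the honest construction of $\Phi(T)$ for general $n$. For $n=1$ one simply solves $\Phi(YS-XT)\equiv\lambda T\pmod{G}$ for $\Phi(T)$, and its leading term $SY$ then delivers $sy$; but this literal recipe breaks for $n\geq2$, because $\Phi(Y)\Phi(S)$ contains the term $XYS$, which is divisible only by $X$ and does \emph{not} lie in $\langle X^n\rangle+\langle G\rangle$, so $X^n\,\Phi(T)\equiv\Phi(Y)\Phi(S)-\lambda T\pmod{G}$ cannot be solved for a polynomial $\Phi(T)$. The crux is therefore to identify the correct generalized relation — equivalently, the correct polynomial $\Phi(T)$ — simultaneously placing both $t$ and $sy$ in $\mathrm{Im}\,\Phi^*$. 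The conceptual guarantee that such a choice exists is the computation above showing $\mathrm{Im}\,\Phi^*+xB=B$, which one then upgrades to $\mathrm{Im}\,\Phi^*=B$ by tracking the construction exactly, as in the explicit formulas of Lemma \ref{lem:B_1,P-B_2,P}. Once $\Phi(T)$ is in hand, the verification of surjectivity is the routine bookkeeping sketched above.
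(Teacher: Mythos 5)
Your setup is correct and is exactly the paper's: with $\Phi(X)=X$, $\Phi(S)=S+X^{n}T$, $\Phi(Y)=XY+L$, $L=(P(X,S+X^{n}T)-P(X,S))/X^{n}$, one indeed gets $\Phi(F)=G$, $X\nmid L$, and your closing identity $cy=x^{n-1}(xy)^{2}-(sy)s^{d-1}-(xy)Q(x,s)$ is precisely how $c\neq0$ enters. But the proof is not complete: you never construct $\Phi(T)$, and both assertions you make in place of that construction are wrong. The obstruction you describe at $n\geq2$ is an artifact of your own mis-generalization: you replaced the relation $YS-XT$ of Lemma \ref{lem:B_1,P-B_2,P} by $YS-X^{n}T$ and tried to solve for $X^{n}\Phi(T)$. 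The recipe that generalizes is the literal one, with $X$ to the first power: set
\[
\Phi(T):=\frac{1}{X}\left(\Phi(Y)\Phi(S)+dcT+dT\,G\right),
\]
which is a polynomial by precisely the mod-$X$ computation you already did ($\Phi(Y)\Phi(S)\equiv LS\equiv dTS^{d}$ and $dTG\equiv -dT(S^{d}+c)$ modulo $X$, so the bracket vanishes mod $X$). Expanding, $\Phi(T)=YS+(d+1)X^{n}YT+ST\frac{\partial Q}{\partial S}-dTQ+X^{n-1}LT+\sum_{i\geq2}\frac{1}{i!}S\frac{\partial^{i}P}{\partial S^{i}}X^{n(i-1)-1}T^{i}$: the monomial $XYS$ that you saw as the obstruction is not one, since only divisibility by $X$ (not $X^{n}$) is needed; it is what produces the leading term $YS$, which is exactly what delivers $sy$. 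By construction $\Phi^{*}(YS-XT)=-dct$, so your extraction chain runs verbatim for every $n\geq1$: $x$, then $t$ (using $dc\neq0$), then $s=\Phi^{*}(S)-x^{n}t$, then $xy=\Phi^{*}(Y)-L(x,s,t)$, then $sy$ (every other term of $\Phi^{*}(T)$ lies in the subalgebra generated by $x,s,t,xy$, since $n\geq1$), and finally $y$ from your identity. This is what the paper means by applying ``the exact same method'' as in Lemma \ref{lem:B_1,P-B_2,P}.

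Your fallback argument cannot substitute for this construction. First, $\mathrm{Im}\,\Phi^{*}$ is a subalgebra, not an ideal, so $\mathrm{Im}\,\Phi^{*}+xB=B$ would not imply $\mathrm{Im}\,\Phi^{*}=B$; there is no Nakayama for subalgebras (compare $k+uk[u]=k[u]$ while $k\neq k[u]$). Second, the premise $\mathrm{Im}\,\Phi^{*}+xB=B$ is itself not established by your computation: modulo $x$, the images of $X,S,Y$ generate only $s$ and $t$ inside $B/xB\cong k[s,y,t]/(s^{d}+c)$, and the class of $y$ can only come from $\Phi(T)$ --- which you have not defined. So the existence of a suitable $\Phi(T)$, which you rightly identify as the crux, is exactly what your proposal leaves unproved; and the new difficulty you claim appears for $n\geq2$ disappears once the relation is generalized correctly.
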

Finally, by induction we have the following.
\begin{thm}
\label{Cor:iso.D.w.Cylindre} $B_{n,P}\otimes_{k}k[T]\simeq B_{m,P}\otimes_{k}k[T]$.

where $n,m\geq1$, $d\geq2$, $c\in k\setminus\{0\}$, $P(X,S)=S^{d}+XQ(X,S)+c$,
and $Q(X,S)\in k[X,S]$. 

In addition, if $\Phi_{i,i+1}$ is the endomorphisms, as determined
in Lemma \ref{lem:B_1,P-B_2,P}, of $k^{[4]}$ that induces an isomorphism
between $B_{i,P}[T]$ and $B_{i+1,P}[T]$, then the endomorphisms
$\Phi_{m-1,m}\circ\cdots\circ\Phi_{n,n+1}$ of $k^{[4]}$ induces
an isomorphism between $B_{n,P}[T]$ and $B_{m,P}[T]$.
\end{thm}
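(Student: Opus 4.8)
The plan is to argue by induction on $m-n$, using Lemma \ref{Lem:cylinder iso} as the single inductive step. Since the relation $\simeq$ is symmetric, I may assume $n\leq m$. If $n=m$ the identity suffices, so suppose the claim holds for every pair whose difference is smaller than $m-n$. Note that the polynomial $P$ is fixed along the whole chain, so the hypotheses $c\neq0$ and $P(X,S)=S^{d}+XQ(X,S)+c$ required by Lemma \ref{Lem:cylinder iso} hold verbatim at every stage. Thus Lemma \ref{Lem:cylinder iso} furnishes $B_{m-1,P}\otimes_{k}k[T]\simeq B_{m,P}\otimes_{k}k[T]$, while the induction hypothesis gives $B_{n,P}\otimes_{k}k[T]\simeq B_{m-1,P}\otimes_{k}k[T]$; composing the two yields the desired isomorphism. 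Equivalently, one chains
\[
B_{n,P}\otimes_{k}k[T]\simeq B_{n+1,P}\otimes_{k}k[T]\simeq\cdots\simeq B_{m,P}\otimes_{k}k[T],
\]
each link being an instance of Lemma \ref{Lem:cylinder iso}.

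For the explicit statement, recall from the proof of Lemma \ref{lem:B_1,P-B_2,P} that each endomorphism $\Phi_{i,i+1}$ of $k^{[4]}$ satisfies $\Phi_{i,i+1}(X^{i}Y-P)=X^{i+1}Y-P$, hence maps the defining ideal $\langle X^{i}Y-P\rangle$ of $B_{i,P}\otimes_{k}k[T]$ into the defining ideal $\langle X^{i+1}Y-P\rangle$ of $B_{i+1,P}\otimes_{k}k[T]$. Consequently $\Phi_{i,i+1}$ descends to a ring homomorphism $\overline{\Phi}_{i,i+1}\colon B_{i,P}\otimes_{k}k[T]\to B_{i+1,P}\otimes_{k}k[T]$, which is precisely the isomorphism produced by the surjectivity of $\Phi_{i,i+1}^{*}=\pi_{X^{i+1}Y-P}\circ\Phi_{i,i+1}$ as explained in \S\ref{sub:Basic-properties}. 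Setting $\Psi=\Phi_{m-1,m}\circ\cdots\circ\Phi_{n,n+1}$, a telescoping computation gives $\Psi(X^{n}Y-P)=X^{m}Y-P$; therefore $\Psi$ carries $\langle X^{n}Y-P\rangle$ into $\langle X^{m}Y-P\rangle$ and descends to $\overline{\Psi}=\overline{\Phi}_{m-1,m}\circ\cdots\circ\overline{\Phi}_{n,n+1}$, a composition of isomorphisms and hence itself an isomorphism. Thus $\Psi^{*}=\pi_{X^{m}Y-P}\circ\Psi$ induces the asserted isomorphism $B_{n,P}\otimes_{k}k[T]\simeq B_{m,P}\otimes_{k}k[T]$.

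The argument is essentially formal, so no genuine obstacle arises; the only point demanding care is that the ``lift, apply $\Phi_{i,i+1}$, project'' recipes compose correctly at the level of the quotient rings. This compatibility is guaranteed exactly by the nesting $\Phi_{i,i+1}(\langle X^{i}Y-P\rangle)\subseteq\langle X^{i+1}Y-P\rangle$: for any $b\in B_{i,P}\otimes_{k}k[T]$ with a lift $B\in k^{[4]}$, the element $\Phi_{i,i+1}(B)$ is a legitimate lift of $\overline{\Phi}_{i,i+1}(b)$, so feeding it into the next recipe reproduces $\overline{\Phi}_{i+1,i+2}\circ\overline{\Phi}_{i,i+1}$ without ambiguity. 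Iterating shows that the map induced by the composite endomorphism $\Psi$ coincides with $\overline{\Psi}$, completing the identification and hence the proof.
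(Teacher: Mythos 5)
Your proof is correct and follows essentially the same route as the paper: the paper's own argument is precisely an induction on the difference $m-n$ chaining the single-step isomorphisms of Lemma \ref{Lem:cylinder iso}, with the explicit isomorphism given by composing the endomorphisms $\Phi_{i,i+1}$. Your additional verification that $\Phi_{i,i+1}(X^{i}Y-P)=X^{i+1}Y-P$ telescopes, so that the composite endomorphism descends to the composite of the induced isomorphisms, simply makes explicit what the paper leaves implicit.
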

\indent\newline\noindent


\begin{thebibliography}{K-ML1}
{\normalsize \bibitem[C-ML]{key-Crachiola Makar} A. Crachiola and
L. Makar-Limanov, }\emph{\normalsize An algebraic proof of a cancellation
theorem for surfaces}{\normalsize , J. Algebra 320 (2008), no. 8,
3113\textendash{}3119.}{\normalsize \par}

{\normalsize \bibitem[D1]{Daigle: traingulability} D. Daigle, }\emph{\normalsize A
necessary and sufficient condition for triangulability of derivation
of $k[X,Y,Z]$}{\normalsize , J. Pure Appl. Algebra 113 (1996), 297-305. }{\normalsize \par}

{\normalsize \bibitem[D2]{D. daigle:LND-degree} D. Daigle, }\emph{\normalsize Polynomials
$f(X,Y,Z)$ of low $\mathrm{LND}$-degree}{\normalsize , CRM Proceedings
\& Lecture Notes 54 (2011), 21-34.}{\normalsize \par}

{\normalsize \bibitem[D3]{D. daigle: tame degree} D. Daigle, }\emph{\normalsize Tame
and wild degree functions}{\normalsize , Osaka J. of Math. 49 (2012),
53-80.}{\normalsize \par}

{\normalsize \bibitem[F-M]{key-David Maubach} D. Finston and S. Maubach,
}\emph{\normalsize Constructing (almost) rigid rings and a UFD having
infinitely generated Derksen and Makar-Limanov invariant}{\normalsize ,
Canad. Math. Bull.}{\normalsize \par}

{\normalsize \bibitem[F]{key-gene} G. Freudenburg, }\emph{\normalsize Algebraic
Theory of Locally Nilpotent Derivation}{\normalsize , Encycl. Math.
Sci., 136, Inv. Theory and Alg. Tr. Groups, VII, Springer- Verlag,
2006. }{\normalsize \par}

{\normalsize \bibitem[K-ML1]{key-Kaliman Makar} Sh. Kaliman and L.
Makar-Limanov, }\emph{\normalsize AK-invariant of affine domains}{\normalsize ,
Affine Algebraic Geometry}\emph{\normalsize , }{\normalsize pages
231\textendash{}255, Osaka University Press, Osaka, 2007.}{\normalsize \par}

{\normalsize \bibitem[K-ML2]{key-sh and makar} Sh. Kaliman and L.
Makar-Limanov, }\emph{\normalsize On the Russell-Koras contractible
threefolds}{\normalsize , J. Algebraic Geom., 6 no. 2 (1997), 247\textendash{}268.}{\normalsize \par}

{\normalsize \bibitem[K-Z]{Sh. Kaliman and M. Zaidenberg} Sh. Kaliman
and M. Zaidenberg, }\emph{\normalsize Affine modification and affine
hypersurfaces with a very transitive automorphism group}{\normalsize ,
Transform. Groups 4 (1999). no. 1. 53-95.}{\normalsize \par}

{\normalsize \bibitem[ML1]{key-Makar} L. Makar-Limanov, }\emph{\normalsize Locally
nilpotent derivations, a new ring invariant and applications}{\normalsize ,
available at http://www.math.wayne.edu/\textasciitilde{}lml/lmlnotes.pdf.}{\normalsize \par}

{\normalsize \bibitem[ML2]{key-ML2001} L. Makar-Limanov, }\emph{\normalsize On
the group of automorphisms of a surface$x^{n}y=P(z)$}{\normalsize ,
Israel J. Math., 121:113\textendash{}123, 2001.}{\normalsize \par}

{\normalsize \bibitem[ML3]{key-M-L1996} L. Makar-Limanov,}\emph{\normalsize{}
On the hypersuface $x+x^{2}y+z^{2}+t^{3}=0$ in $\mathbb{C}^{4}$
or a $\mathbb{C}^{3}$-like threefold which is not $\mathbb{C}^{3}$}{\normalsize ,
Israel J. Math., 96 (part B), pp. 419-429, 1996.}{\normalsize \par}

{\normalsize \bibitem[P]{Ploni} P.-M. Poloni, }\emph{\normalsize Classification(s)
of Danielewski hypersurfaces}{\normalsize . Transform. Groups 16,
Issue 2 (2011), 579-597.}\end{thebibliography}
\end{document}